\numberwithin{equation}{section}
\newtheorem{theorem}{Theorem}[section]
\newtheorem{lemma}{Lemma}[section]
\def\0{{\mathbf{0}}}
\def\u{{\mathbf{u}}}
\def\ul{\underline{u}}
\def\vl{\underline{v}}
\def\0{{\mathbf{0}}}
\title{Analysis of A New Adaptive Time Filter Algorithm for The Unsteady Stokes/Darcy Model
\thanks{Subsidized by NSFC (grant No.12001347, No.12101494 and No.11771259),
Scientific Research Program Funded by Shaanxi Provincial Education Department (Program No. 21JP019 and No. 21JK0935) and
the Natural Science Foundation of Shaanxi Province(No. 2021JQ-426).
Thanks for the support from special support programme to develop innovative talents in the region of Shaanxi province and youth innovation team on computationally efficient numerical methods based on new energy problems in Shaanxi province.}}
\author{
Yi Qin\thanks{School of Mathematics and Data Science, Shaanxi University of Science and Technology, Xi'an, Shaanxi 710021, China. ({\tt 4545@sust.edu.cn})},
Yang Wang\thanks{School of Mathematics and Data Science, Shaanxi University of Science and Technology, Xi'an, Shaanxi 710021, China. ({\tt 210911017@sust.edu.cn})},
Yi Li\thanks{School of Mathematics, Northwest University, Xi'an, Shaanxi 710127, China. ({\tt liyizz@nwu.edu.cn})}
and Jian Li\thanks{School of Mathematics and Data Science, Shaanxi University of Science and Technology, Xi'an, Shaanxi 710021, China. ({\tt jianli@sust.edu.cn})}
}
\date{}
\begin{document}
\maketitle
\begin{abstract}
In this report, we propose a new adaptive time filter algorithm for the unsteady Stokes/Darcy model.
First we present a first order $\theta$-scheme with the variable time step which is one parameter family of Linear Multi-step methods
and use a time filter algorithm to increase the convergence order to second order with almost no increasing the amount of computation.
Furthermore, we construct coupled and decoupled adaptive algorithms.
Then we analyze stabilities and the second-order accuracy of variable time-stepping algorithm for Linear Multi-step methods plus time filter, respectively.
Finally, we use two numerical experiments to verify theoretical results including effectiveness, convergence and efficiency with adaptive method.
\end{abstract}

{\bf Keywords:} Stokes/Darcy; Variable time step; Adaptive algorithm; Linear Multi-step method; Time filter

{\bf AMS Subject Classification:} 76D05, 76S05, 76D03, 35D05
\section{Introduction}
In recent years, the coupling problem between free fluid flow and porous media flow can effectively describe many problems, such as pollution of surface water and groundwater, oil exploitation, industrial fluid filtration and blood movement, so the coupling problem has been studied by more and more scholars. In this paper, we study this coupling problem based on an important model, namely Stokes/Darcy model. We consider that it is controlled by Stokes equation in the free fluid flow region and Darcy equation in the porous media region.

A lot of work has been done on the Stokes/Darcy model. Numerical methods for steady Stokes/Darcy model include finite element methods, discontinuous Galerkin methods, interface relaxation methods, Lagrange multiplier methods, two-grid or multi-grid methods, domain decomposition methods\cite{ACSD,OSCS,FRAS,DJAM,PDGM,ACNS,AANF,SCPW,ARPE,FFSD,CFFW,SNSM,RDDM,APRD,OEEO,FTMS,ATFC} and so on.
However, for the unsteady Stokes/Darcy model, The discretization of time is still a problem that needs to be studied.
Many scholars use first-order algorithms that are more computationally efficient and easy to implement\cite{ATDN,MMXZ,NATG,PTMF,ADST}, and some scholars use higher-order algorithms with higher precision\cite{AELT,PTSM,APMW}.
At present, the research on constant time step are relatively mature, many scholars have begun to notice that the variable time-stepping algorithm and construct corresponding adaptive algorithm which has many advantages in both time accuracy and computational efficiency.
The constant time-stepping algorithm fixes the time step in the process of calculation, and cannot adjust the size of the time step according to the actual situation. Compared with it, adaptive algorithm can adjust time step size automatically according to the needs of different models, and shorten the step as much as possible to ensure the computational efficiency.

In this paper, we first give a first order $\theta$-scheme with the variable time step, which is a parameter family of Linear Multi-step methods for the unsteady Stokes/Darcy model. 
In particular, when $\theta=0$, it's the Backward Euler method, when $\theta=1/2$, it's the Crank-Nicolson method, and when $\theta=1$, it's the Forward Euler method.
Here we consider the more general case, which is $0<\theta<1/2$.
Since time filter are easy to modify and implement programmatically and can improve the accuracy of algorithms, time filter are widely used\cite{AGWJ,APMW,PTSM,AEVS,APMF}.
So based on the first order Linear Multi-step methods, we think about the effect of adding the simple time filter for the unsteady Stokes/Darcy model. The method is modular and need to add only two additional line of code, which increases the accuracy of the Linear Multi-step Method from first to second order.
We propose variable time-stepping algorithms for coupled and decoupled Linear Multi-step methods plus time filter, and construct corresponding adaptive algorithms. The stabilities and the second-order accuracy are analyzed and we can find that the results do not change as the step size increases or decreases.
Finally, we make two numerical experiments. In the first test, we verify the stabilities of the variable time-stepping algorithms by three sets of different variations in time steps. In the second test, we show that convergence order and CPU time of coupled and decoupled adaptive algorithms are increased from the first order to the second order, and by comparing coupled and decoupled algorithms, we get that the decoupled algorithm is more computationally efficient.

The rest of this paper are as follows:
in Section \ref{2}, we review coupled Stokes/Darcy model and weak formulation. 
Section \ref{3} is divided into two small parts, one is to introduce variable time-stepping algorithms of the coupled and decoupled Linear Multi-step methods plus time filter, at the same time, we construct the adaptive algorithms. The other is the stability analysis for the variable time-stepping algorithm.
Section \ref{4}, we give the error estimates of the two variable time-stepping algorithms respectively.
We use two numerical experiments to verify the effectiveness, convergence and efficiency of adaptive algorithm in Section \ref{5}.
\section{The Stokes/Darcy model and weak formulation\label{2}}
This section, the coupled Stokes/Darcy model is considered in a bounded smooth domain $\Omega\subset {\bf R}^d$, $d=2$ or $3$, which consists of a free fluid flow region $\Omega_f$ and a porous media flow region $\Omega_p$ with the unit outward normal vectors $\vec{n}_f$ and $\vec{n}_p$ on $\partial\Omega_f$ and $\partial\Omega_p$, where $\Omega_f$ and $\Omega_p$ are two disjoint, connected and bounded domains.
The interface $\Gamma=\overline{\Omega}_f\cap\overline{\Omega}_p$ separated the two regions $\Omega_f$ and $\Omega_p$, we need to pay attention to $\vec{n}_f=-\vec{n}_p$ on $\Gamma$ and define $\Gamma_i=\partial\Omega_i\cap\partial\Omega$, for $i$ $=$ $f,$ $p.$ We can refer to the sketch (Figure \ref{fig1}).

\begin{figure}[ht]
    \centering
    \includegraphics[width=7cm]{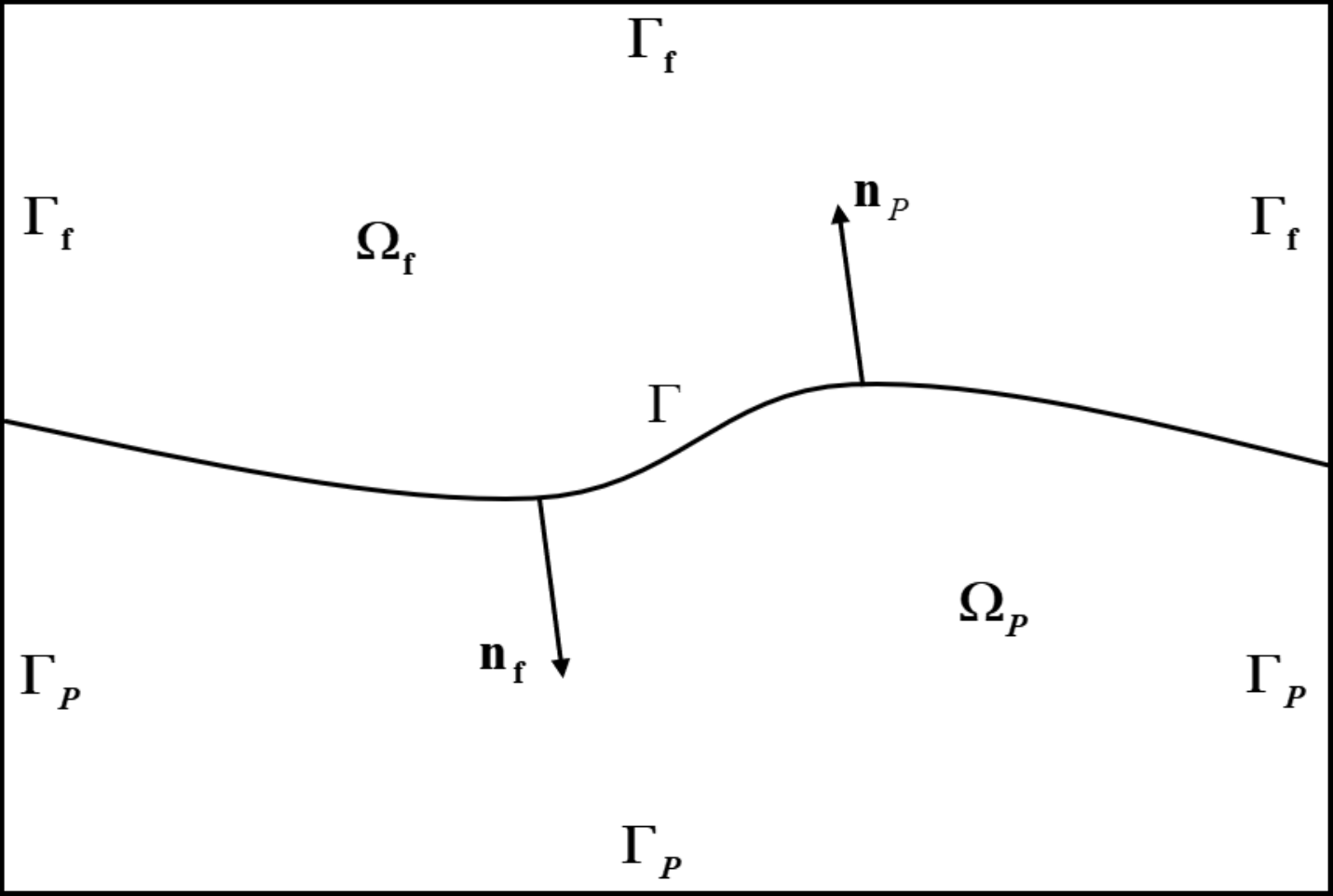}
    \caption{\small A bounded smooth domain $\Omega$ consisting of a fluid flow region $\Omega_f$ and a porous media flow region $\Omega_p$ separated by an interface $\Gamma$.}\label{fig1}
\end{figure}

For the finite time interval $[0,T]$. The flow in the free fluid flow region $\Omega_f$ we describe it using the Stokes equation, which is stated as follows: for fluid velocity $\vec{u}_f(\vec{x},t)$ and kinematic pressure $p_f(\vec{x},t)$
\begin{eqnarray}
&\label{stokes1}\frac{\partial\vec{u}_f}{\partial t}-\nu\triangle\vec{u}_f+\nabla p_f=\vec{g}_f, \ \quad\qquad &\mbox{in}\;\Omega_f\times(0,T],\\
&\label{stokes2}\nabla\cdot \vec{u}_f=0, \qquad\qquad\qquad\qquad\qquad&\mbox{in}\;\Omega_f\times(0,T],\\
&\label{initial}\vec{u}_f(\vec{x},0)=\vec{u}_f^0(\vec{x}),\quad\qquad\qquad\qquad&\mbox{in}\;\Omega_f,
\end{eqnarray}
where $\nu>0$ is the kinetic viscosity and $\vec{g}_f(\vec{x},t)$ is the external force.

The flow in the porous media region $\Omega_p$ we describe it using the follow equation:
\begin{eqnarray}
&\label{Darcy1}S_0\frac{\partial\phi_p}{\partial t}+\nabla\cdot\vec{u}_p=g_p,
\qquad\qquad\qquad &\mbox{in}\;\Omega_p\times(0,T],\\
&\label{Darcy2}\vec{u}_p=-\mathds{K}\nabla\phi_p,
\qquad\qquad\qquad\qquad\quad&\mbox{in}\;\Omega_p\times(0,T],\\
&\label{initia2}\phi_p(\vec{x},0)=\phi_p^0(\vec{x}),\qquad\qquad\qquad\qquad&\mbox{in}\;\Omega_p.
\end{eqnarray}
Combining the equation (\ref{Darcy1}) and (\ref{Darcy2}), we have the Darcy equation: for the piezometric (hydraulic) head $\phi_p(\vec{x},t)$
\begin{eqnarray}\label{darcy}
S_0\frac{\partial\phi_p}{\partial t}-\nabla\cdot(\mathds{K}\nabla\phi_p)=g_p,\qquad\mbox{in}\;\Omega_p\times(0,T],
\end{eqnarray}
where $\vec{u}_p$ is the flow velocity in the porous media region which is proportional to the gradient of $\phi_p$, namely, the Darcy's law. $S_0$ is the specific mass storativity coefficient and $\phi_p=z+\frac{p_p}{\rho g}$, here $z$ and $p_p$ denote the relative depth from a fixed reference level and the dynamic pressure, $\rho$ and $g$ represent the density and the gravitational constant, respectively. $g_p(\vec{x},t)$ is a source term and $\mathds{K}=\{\mathds{K}_{jj}\}_{d\times d}$ denotes a symmetric and positive definite matrix with the smallest eigenvalue $\mathds{K}_{min}>0$, which is allowed to vary in space.

We usually assume that the fluid velocity $\vec{u}_f(\vec{x},t)$ and the piezometric (hydraulic) head  $\phi_p(\vec{x},t)$ satisfy the homogeneous Dirichlet boundary conditions:
\begin{equation}
\begin{split}\label{boundary}
\vec{u}_f=0,\qquad\mbox{on}\;\Gamma_f\times(0,T]\qquad\text{and}\qquad
\phi_p=0,\qquad\mbox{on}\;\Gamma_p\times(0,T].
\end{split}
\end{equation}
Interface conditions are important in the Stokes/Darcy model, which include
the conservation of mass, the balance of normal forces, and the Beaver-Joseph Saffmann conditions on $\Gamma$:
\begin{eqnarray}
\label{conservation}&&\vec{u}_f\cdot \vec{n}_f+\vec{u}_p\cdot \vec{n}_p=0,\\
\label{balance}&&p_f-\nu\vec{n}_f\frac{\partial\vec{u}_f}{\partial\vec{n}_f}=g\phi_p,\\
\label{bj}&&-\nu\vec{\tau}_i\frac{\partial\vec{u}_f}{\partial\vec{n}_f}=\frac{\alpha\nu\sqrt{d}}{\sqrt{\mbox{trace}(\mathbf{\Pi})}}\vec{u}_f\cdot\vec{\tau}_i ,\quad i=1,2,\cdots,d-1,
\end{eqnarray}
where $\vec{\tau}_i$, $i=1,2,\cdots,d-1$, are the orthonormal tangential unit vectors on the interface $\Gamma$, $d$ is the space dimension, $\alpha$ is an positive parameter and the permeability $\mathbf{\Pi}$ has the relation $\mathbf{\Pi}=\frac{\mathds{K}\nu}{g}$.

Now we give the Hilbert space that needs to be used in the next analysis process:
\begin{eqnarray*}
&&X_f=\{\vec{v}_f \in (H^1(\Omega_f)^d): \vec{v}_f=0,\ \text{on}\ \Gamma_f\},\\
&&X_p=\{\psi_p\in (H^1(\Omega_p)^d): \psi_p=0,\ \text{on}\ \Gamma_p\},\\
&&Q_f=L^2(\Omega_f),\\
&&X=X_f\times X_p.
\end{eqnarray*}
In addition, we define $X'$, $X_f'$ and $X_p'$ to represent the dual spaces of $X$, $X_f$ and $X_p$, respectively.
For the domain $D$, we define the scalar inner product in $D=\Omega_f$ or $\Omega_p$ by $(\cdot,\cdot)_D$.
For the Hilbert space $X$, $X_f'$ and $X_p'$, we denote the corresponding norms:
\begin{eqnarray*}
&\|\vec{\ul}\|_0=\sqrt{(\vec{u}_f,\vec{u}_f)_{\Omega_f}+gS_0(\phi_p,\phi_p)_{\Omega_p}},
\qquad\qquad\qquad\quad &\forall\ \vec{\ul}=(\vec{u}_f,\phi_p)\in X,\\
&\|\vec{\ul}\|_{X}=\sqrt{\nu(\nabla\vec{u}_f,\nabla\vec{u}_f)_{\Omega_f}+g\mathds{K}(\nabla\phi_p,\nabla\phi_p)_{\Omega_p}},
  \qquad\quad               &\forall\ \vec{\ul}=(\vec{u}_f,\phi_p)\in X,\\
&\Vert\vec{u}_f\Vert_f=\Vert\vec{u}_f\Vert_{L^2(\Omega_f)},
\quad\Vert\vec{u}_f\Vert_{X_f}=\Vert\nu^{\frac{1}{2}}\nabla\vec{u}_f\Vert_{L^2(\Omega_f)},
  \quad \                &\forall\ \vec{u}_f\in X_f,\\
&\Vert\phi_p\Vert_p=\Vert\phi_p\Vert_{L^2(\Omega_p)},
\quad\ \Vert\phi_p\Vert_{X_p}=\Vert\mathds{K}^{\frac{1}{2}}\nabla\phi_p\Vert_{L^2(\Omega_p)},
  \quad \                     &\forall\ \phi_p\in X_p,
\end{eqnarray*}
where the norms $\Vert\cdot\Vert_{X_f/X_p}$ and $\Vert\cdot\Vert_{f/p}$ denote
$H^1(\Omega_{f/p})$ and $L^2(\Omega_{f/p})$.
Then for the function $v(x,t)$, we define the norms:
\begin{eqnarray*}
\|v\|_{L^2(0,T;L^2)}:=(\int_{0}^{T}\|v(\cdot,t)\|_{L^2}^2dt)^{\frac{1}{2}},\quad \|v\|_{L^2(0,T;L^{\infty})}:=ess\sup_{[0,T]}\|v(\cdot,t)\|_{L^2}.
\end{eqnarray*}

Based on the above related concepts, we have weak formulations of the unsteady coupled Stokes/Darcy model(\ref{stokes1})-(\ref{bj}), which is expressed as: $\vec{g}_f\in L^2(0,T; {L}^2(\Omega_f))$ and $g_p\in L^2(0,T;  {L}^2(\Omega_p))$, find $\vec{\ul}=(\vec{u}_f, \phi_p) \in (L^2(0,T; X_f)\cap L^\infty(0,T; L^2(\Omega_f))\times L^2(0,T; X_p)\cap L^\infty(0,T; L^2(\Omega_p)) )$ and $p_f\in L^2(0,T; Q_f)$ such that $\forall$ $(\vec{\vl},q_f) \in X\times Q_f$
\begin{eqnarray}\label{coupled}
&&(\frac{\partial\vec{\ul}}{\partial t},\vec{\vl})+a(\vec{\ul},\vec{\vl})+b(\vec{\vl},p_f)=<\vec{F},\vec{\vl}>_{X'},\nonumber\\
&&b(\vec{\ul},q_f)=0,\\
&&\vec{\ul}(\vec{x},0)=\vec{\ul}^0,\nonumber
\end{eqnarray}
where
\begin{eqnarray*}
    &&(\frac{\partial\vec{\ul}}{\partial t},\vec{\vl})=(\frac{\partial\vec{u}_f}{\partial t},\vec{v}_f)+g(\frac{S_0\partial\phi_p}{\partial t},\psi_p),\\
	&&a(\vec{\ul},\vec{\vl})=a_{\Omega}(\vec{\ul},\vec{\vl})+a_\Gamma(\vec{\ul},\vec{\vl}),\\
    &&a_{\Omega}(\vec{\ul},\vec{\vl})=a_{\Omega_f}(\vec{u}_f,\vec{v}_f)+a_{\Omega_p}(\phi_p,\psi_p),\\
    &&a_{\Omega_f}(\vec{u}_f,\vec{v}_f)=\nu(\nabla(\vec{u}_f),\nabla(\vec{v}_f))_{\Omega_f}
    +\sum_{i=1}^{d-1}\int_{\Gamma}\frac{\alpha\nu\sqrt{d}}{\sqrt{\mbox{trace}(\mathbf{\Pi})}}(\vec{u}_f\cdot\vec{\tau}_i)(\vec{v}_f\cdot\vec{\tau}_i),\\
    &&a_{\Omega_p}(\phi_p,\psi_p)=g(\mathds{K}\nabla\phi_p,\nabla\psi_p)_{\Omega_p},\\
	&&a_\Gamma(\vec{\ul},\vec{\vl})=c_{\Gamma}(\vec{v}_f,\phi_p)-c_{\Gamma}(\vec{u}_f,\psi_p)=
       g(\phi_p,\vec{v}_f\cdot \vec{n}_f)_\Gamma-g(\psi_p,\vec{u}_f\cdot \vec{n}_f)_\Gamma,\\
    &&b(\vec{\vl},p_f)=-(p_f,\nabla\cdot \vec{v}_f)_{\Omega_f},\\
	&&<\vec{F},\vec{\vl}>_{X'}=(\vec{g}_f,\vec{v}_f)_{\Omega_f}+g(g_p,\psi_p)_{\Omega_p}.
\end{eqnarray*}
The coupled Stokes-Darcy model is well-posedness, which we can find in the other papers, we mainly analyze its numerical solution in this paper. For bilinear form $a(\cdot,\cdot)$, it is continuous and coercive:
\begin{eqnarray}\label{bilinearform}
\begin{split}
	&a(\vec{\ul},\vec{\vl})\leq C_{con} \|\vec{\ul}\|_X\|\vec{\vl}\|_X,&\qquad  &\forall\ \vec{\ul},\ \vec{\vl}\in X,\\
	&a(\vec{\ul},\vec{\ul})\geq C_{coe} \|\vec{\ul}\|_X^2,&  &\forall\ \vec{\ul}\in X,\\
    &a_{\Omega_f}(\vec{u}_f,\vec{u}_f)\geq \hat{C}_{coe} \|\vec{u}_f\|_{X_f}^2,&  &\forall\ \vec{u}_f\in X_f,\\
    &a_{\Omega_p}(\phi_p,\phi_p)\geq  \check{C}_{coe} \|\phi_p\|_{X_p}^2,&  &\forall\ \phi_p \in X_p.
\end{split}
\end{eqnarray}
At the same time, for the interface term $a_\Gamma(\cdot,\cdot)$, it satisfies the anti-symmetric properties:
\begin{eqnarray}\label{agamma}
\begin{split}
&a_\Gamma(\vec{\ul},\vec{\vl})=-a_\Gamma(\vec{\vl},\vec{\ul})\quad&\text{and}\quad&a_\Gamma(\vec{\ul},\vec{\ul})=0,&\quad&\forall\ \vec{\ul},\vec{\vl}\in X,\\
&a_\Gamma(\vec{u}_f,\vec{v}_f)=-a_\Gamma(\vec{v}_f,\vec{u}_f)\quad&\text{and}\quad&a_\Gamma(\vec{u}_f,\vec{u}_f)=0,&\quad&\forall\ \vec{u}_f,\vec{v}_f\in X_f,\\
&a_\Gamma(\phi_p,\psi_p)=-a_\Gamma(\psi_p,\phi_p)\quad&\text{and}\quad&a_\Gamma(\phi_p,\phi_p)=0,&\quad&\forall\ \phi_p,\psi_p\in X_p.
\end{split}
\end{eqnarray}

Then, we use the finite element methods(FEMs) to discretize the Stokes-Darcy model in space. Assuming $h$ is an any given small positive parameter, the regular triangulations ${\cal T}_h$, ${\cal T}_{fh}$ and ${\cal T}_{ph}$ are regular partition of triangular or quadrilateral elements of $\Omega$, $\Omega_f$ and $\Omega_p$. In order to facilitate our later analysis, we assume the domain $\Omega=\Omega_f\times\Omega_p$ is smooth enough. And we choose the Tayor-Hood elements(P2-P1) ${X}_{fh}\subset{X}_f$, $Q_{fh}\subset Q_f$  and the continuous piecewise quadratic  elements(P2) $X_{ph}\subset X_p$, which are finite element spaces, we denote $X_h={X}_{fh}\times {X}_{ph}$ and assume  that the fluid velocity space ${X}_{fh}$ and the pressure space $Q_{fh}$ satisfy the discrete LBB condition:
\begin{equation}\label{dlbb}
\inf\limits_{q_{f}^{h}\in Q_{fh}}
\sup\limits_{\vec{v}_{f}^{h}\in {X}_{fh}}\frac{(q_{f}^{h},\nabla\cdot \vec{v}_{f}^{h})_{\Omega_f}}
{\|q_{f}^{h}\|_{Q_f}\,\|\vec{v}_{f}^{h}\|_{{X}_f}}\geq \beta.
\end{equation}
We define the linear projection operator (see \cite{MMXZ}): for $\forall$ $\vec{\vl}^h\in X_h$, $q_{f}^{h}\in Q_{fh}$ and $t\in(0,T]$, $P_h:(\vec{\ul}(t),p_f(t)) \in X\times Q_f\to(P_h^{\vec{\ul}}\vec{\ul}(t), P_h^{p_f}p_f(t)) \in X_h\times Q_{fh}$ satisfies
\begin{eqnarray}\label{projection}
&&a(P_h^{\vec{\ul}}\vec{\ul}(t),\vec{\vl}^h)+b(\vec{\vl}^h,P_h^{p_f}p_f(t))=a(\vec{\ul}(t),\vec{\vl}^h)+b(\vec{\vl}^h,p_f(t)),\\
&&b(P_h^{\vec{\ul}}\vec{\ul}(t),q_{fh})=0.\nonumber
\end{eqnarray}
Then we assume that $(\vec{\ul}(t),p_f(t))$ is smooth enough and the projection operator $(P_h^{\vec{\ul}}\vec{\ul}(t)$,
$P_h^{p}p_f(t))$ of $(\vec{\ul}(t),p_f(t))$ satisfies the approximation properties:
\begin{eqnarray}\label{app}
&&\|P_h^{\vec{\ul}}\vec{\ul}(t)-\vec{\ul}(t)\|_0\leq Ch^2\|\vec{\ul}(t)\|_{H^2},\nonumber\\
&&\|P_h^{\vec{\ul}}\vec{\ul}(t)-\vec{\ul}(t)\|_X\leq Ch\|\vec{\ul}(t)\|_{H^2},\\
&&\|P_h^{p}p_f(t)-p_f(t)\|_{L^2}\leq Ch\|p_f(t)\|_{H^1}.\nonumber
\end{eqnarray}
In addition, we give several inequalities, including the Poincar$\acute{e}$, trace, Sobolev and inverse inequalities: there exist constants $C_i$ and $\tilde{C}_i$ $(i= p, t, s, I)$
such that for $\forall$ $\vec{v}_f\in{X}_f$ and $\psi_p\in X_p$,
\begin{eqnarray}
\begin{split}
\label{p}
&\|\vec{v}_f\|_{f}\leq C_p\|\nabla\vec{v}_f\|_{f},&\quad
&\|\psi_p\|_{f}\leq \tilde{C}_p\|\nabla\psi_p\|_{p},\\
\label{t}
&\|\vec{v}_f\|_{\Gamma}\leq C_t\|\nabla\vec{v}_f\|_{f},&\quad
&\|\psi_p\|_{\Gamma}\leq \tilde{C}_t\|\nabla\psi_p\|_{p},\\
\label{s}
&\|\vec{v}_f\|_{H^\frac{1}{2}(\partial\Omega_f)}\leq C_s\|\nabla\vec{v}_f\|_{f},&\quad
&\|\psi_p\|_{H^\frac{1}{2}(\partial\Omega_p)}\leq \tilde{C}_s\|\nabla\psi_p\|_{p},\\
\label{inverse}
&\|\vec{v}_{f}^{h}\|_{X_f}\leq C_I h^{-1}\|\vec{v}_{f}^{h}\|_{f},&\quad
&\|\vec{\psi}_{p}^{h}\|_{X_p}\leq \tilde{C}_I h^{-1}\|\vec{\psi}_{p}^{h}\|_{p}.
\end{split}
\end{eqnarray}
Note that  $C_i$ $(i= p, t, s, I)$ depend on the fluid flow domain $\Omega_f$ and $\tilde{C}_i$ $(i= p, t, s, I)$ depend on the porous media domain $\Omega_p$.
\section{Numerical algorithms and stabilities \label{3}}
We divide this section into two parts, the first part will give the variable time-stepping algorithms of coupled and decoupled Linear Multi-step methods plus time filter and construct the adaptive algorithm. And the second part will analyze stabilities of the two algorithms separately.

Before analyzing, we need to recall several lemmas that they will use multiple times during the analysis.

\begin{lemma}
\cite{VGAP}
\label{lemma}
Let $\delta =\beta_2-\frac{\alpha_2}{2}>0$. Then the coefficients $\alpha_i$ and $\beta_i$ satisfy the following relation:
\begin{eqnarray*}
&& 2\left(\sum_{i=0}^{2}\alpha_i\vartheta_i\right)\left(\sum_{i=0}^{2}\beta_i\vartheta_i\right)
\geq (\alpha_2^2+\delta )\vartheta_2^2-(2\alpha_2-1)\vartheta_1^2-\left((\alpha_2-1)^2+\delta\right)\vartheta_0^2\\
&&\qquad\qquad\qquad\qquad-2\left(\alpha_2(\alpha_2-1)+\delta\right)
(\vartheta_2\vartheta_1-\vartheta_1\vartheta_0),\quad
\forall\ \vartheta_0, \vartheta_1, \vartheta_2\in R.
\end{eqnarray*}
\end{lemma}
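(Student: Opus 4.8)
The plan is to recast the estimate as the nonnegativity of a single quadratic form in $(\vartheta_0,\vartheta_1,\vartheta_2)$, in the spirit of Dahlquist-type $G$-stability arguments. The starting point is that the right-hand side \emph{telescopes}. Introduce the symmetric form
\[
G(a,b)=(\alpha_2^{2}+\delta)\,a^{2}-2\bigl(\alpha_2(\alpha_2-1)+\delta\bigr)ab+\bigl((\alpha_2-1)^{2}+\delta\bigr)b^{2}.
\]
Because $(\alpha_2-1)^{2}-\alpha_2^{2}=-(2\alpha_2-1)$, a one-line expansion of $G(\vartheta_2,\vartheta_1)-G(\vartheta_1,\vartheta_0)$ reproduces \emph{exactly} the right-hand side of the asserted inequality, with no relation among the coefficients needed at this stage. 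Hence it suffices to prove
\[
2\Bigl(\sum_{i=0}^{2}\alpha_i\vartheta_i\Bigr)\Bigl(\sum_{i=0}^{2}\beta_i\vartheta_i\Bigr)\ \ge\ G(\vartheta_2,\vartheta_1)-G(\vartheta_1,\vartheta_0),\qquad\forall\ \vartheta_0,\vartheta_1,\vartheta_2\in R.
\]

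For the left-hand side I would insert the defining relations of the Linear Multi-step-plus-time-filter scheme — its zeroth-, first- and second-order (consistency) conditions, which express $\alpha_1,\alpha_0,\beta_1,\beta_0$ through $\alpha_2$ and $\beta_2$, together with the standing identity $\delta=\beta_2-\tfrac{\alpha_2}{2}$ — and then collect everything as a quadratic form in $(\vartheta_0,\vartheta_1,\vartheta_2)$. It is convenient to rewrite the two linear factors in terms of the increments $\vartheta_1-\vartheta_0$ and $\vartheta_2-\vartheta_1$, since $\sum_i\alpha_i=0$ forces $\sum_i\alpha_i\vartheta_i$ to depend on those increments only. After subtracting the telescoping form, all the diagonal terms $\vartheta_i^{2}$ and the mixed terms $\vartheta_2\vartheta_1,\ \vartheta_1\vartheta_0,\ \vartheta_2\vartheta_0$ should combine into a single perfect square: the difference equals a positive multiple of $(\vartheta_2-2\vartheta_1+\vartheta_0)^{2}$, the multiplier being a positive structural constant times $\delta$. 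The hypothesis $\delta=\beta_2-\tfrac{\alpha_2}{2}>0$ is precisely what makes this multiplier positive, and since a square is nonnegative the inequality follows. Equivalently, one may complete the square directly in the $3\times3$ symmetric matrix $M$ of the difference and verify that $M\succeq0$ (indeed of rank one).

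The main obstacle is the algebraic bookkeeping in that last expansion rather than any conceptual difficulty: in the variable-step setting the $\alpha_i,\beta_i$ carry the step-ratio dependence, so every diagonal and mixed term must be checked to cancel against $G(\vartheta_2,\vartheta_1)-G(\vartheta_1,\vartheta_0)$, and a single sign slip in one of the order conditions destroys the cancellation. I would therefore proceed in a safe order: first confirm the telescoping identity for $G$ symbolically (it is coefficient-independent), then test the quadratic-form identity on a couple of explicit coefficient tuples as a sanity check, and only afterwards carry out the general computation and read off the positive-semidefiniteness.
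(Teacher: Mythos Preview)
The paper does not prove this lemma; it quotes it from Girault--Raviart and only \emph{uses} it in the stability and error analyses (Theorems~\ref{theorem1}--\ref{theorem4}). There is therefore no in-paper argument to compare against.

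Your outline is the standard $G$-stability route and is sound. The telescoping identity $G(\vartheta_2,\vartheta_1)-G(\vartheta_1,\vartheta_0)=\text{RHS}$ is exactly right; the single check $(\alpha_2-1)^2-\alpha_2^2=-(2\alpha_2-1)$ is all that is needed there. Two caveats on the remaining step. First, the inequality is \emph{not} true for arbitrary reals $\alpha_i,\beta_i$ with $\beta_2>\alpha_2/2$; the lemma tacitly assumes that $\alpha_i,\beta_i$ are the specific multistep coefficients appearing in the paper's operators $A(\cdot)$ and $B(\cdot)$, so you must write those consistency relations down explicitly (you acknowledge this, but it is essential, not optional). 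Second, your assertion that the remainder collapses to a positive multiple of $(\vartheta_2-2\vartheta_1+\vartheta_0)^2$ with factor ``a structural constant times $\delta$'' is at this point a conjecture supported by pattern-matching, not something you have shown; for instance, for $(\tau,\theta)=(1,0)$ the factor is $2\delta$, while for $(\tau,\theta)=(1,1/4)$ it is $\tfrac{3}{2}\delta$, so the ``structural constant'' genuinely depends on the step ratio and $\theta$. The plan is correct, but the proof is not finished until that quadratic-form computation is actually carried out.
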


\begin{lemma}\cite{MMXZ} \label{gamma}For
$\forall$ $\vec{v}_f\in X_{f}$, $\phi_p\in X_{p}$, there exists $C_k>0$ such that $\forall$ $\varepsilon>0$,
\begin{eqnarray}\label{gammal}
\vert c_\Gamma(\vec{v}_f,\phi_p)\vert
\leq\frac{1}{4\varepsilon}\Vert \vec{v}_f \Vert_{X_{f}}^2
+C_k\varepsilon\Vert\phi_p\Vert_{{X}_p}^2.
\end{eqnarray}
In addition, for $\forall$ $\vec{v}_{f}^{h}\in {X}_{fh}$, $\phi_{p}^{h}\in {X}_{ph}$, there exists $\tilde{C}_k>0$ such that $\forall$ $\tilde{\varepsilon}>0$,
\begin{eqnarray}\label{gamma2}
\vert c_\Gamma(\vec{v}_{f}^{h},\phi_{p}^{h})\vert
\leq\frac{1}{4\tilde{\varepsilon}}\Vert\vec{v}_{f}^{h}\Vert_{{X}_f}^2
+\tilde{C}_k\tilde{\varepsilon} h^{-1}\|\phi_{p}^{h}\|_{p}^2.
\end{eqnarray}
\end{lemma}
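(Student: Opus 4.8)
The plan is to establish both inequalities by one common scheme: Cauchy--Schwarz on the interface $\Gamma$, a trace inequality, and then Young's inequality, the two estimates differing only in how the porous-medium factor is controlled.

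First I would use the definition $c_\Gamma(\vec{v}_f,\phi_p)=g(\phi_p,\vec{v}_f\cdot\vec{n}_f)_\Gamma$ and the Cauchy--Schwarz inequality in $L^2(\Gamma)$, noting $|\vec{n}_f|=1$, to obtain $|c_\Gamma(\vec{v}_f,\phi_p)|\le g\,\|\phi_p\|_\Gamma\,\|\vec{v}_f\|_\Gamma$. For the continuous estimate (\ref{gammal}) I would then apply the trace inequalities (\ref{t}), $\|\vec{v}_f\|_\Gamma\le C_t\|\nabla\vec{v}_f\|_f$ and $\|\phi_p\|_\Gamma\le\tilde{C}_t\|\nabla\phi_p\|_p$, and pass to the energy norms using the definitions of $\|\cdot\|_{X_f}$, $\|\cdot\|_{X_p}$ together with the lower bound $\|\mathds{K}^{1/2}\nabla\phi_p\|_p^2\ge\mathds{K}_{min}\|\nabla\phi_p\|_p^2$ coming from the spectral bound on $\mathds{K}$; this gives $|c_\Gamma(\vec{v}_f,\phi_p)|\le g\,C_t\tilde{C}_t\,\nu^{-1/2}\mathds{K}_{min}^{-1/2}\,\|\vec{v}_f\|_{X_f}\|\phi_p\|_{X_p}$. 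A final use of Young's inequality $ab\le\frac{1}{4\varepsilon}a^2+\varepsilon b^2$ with $a=\|\vec{v}_f\|_{X_f}$ and $b=g\,C_t\tilde{C}_t\,\nu^{-1/2}\mathds{K}_{min}^{-1/2}\|\phi_p\|_{X_p}$ yields (\ref{gammal}) with the explicit value $C_k=g^2C_t^2\tilde{C}_t^2\nu^{-1}\mathds{K}_{min}^{-1}$.

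For the discrete estimate (\ref{gamma2}) the $\vec{v}_{f}^{h}$ factor is treated identically, $\|\vec{v}_{f}^{h}\|_\Gamma\le C_t\nu^{-1/2}\|\vec{v}_{f}^{h}\|_{X_f}$, but the porous-side factor is instead bounded by the discrete (inverse) trace inequality $\|\phi_{p}^{h}\|_\Gamma\le C_{tr}\,h^{-1/2}\|\phi_{p}^{h}\|_p$ valid on the finite element space $X_{ph}$, after which Young's inequality with parameter $\tilde{\varepsilon}$ produces (\ref{gamma2}) with $\tilde{C}_k=g^2C_t^2C_{tr}^2\nu^{-1}$. The only point that needs care is the power of $h$: one should not merely chain the \emph{global} trace inequality (\ref{t}) with the \emph{global} inverse inequality (\ref{inverse}), since that route only gives $\|\phi_{p}^{h}\|_\Gamma\le C h^{-1}\|\phi_{p}^{h}\|_p$ and hence a bound with $h^{-2}$ rather than $h^{-1}$; the sharp power requires the element-wise trace inequality $\|\phi_{p}^{h}\|_{L^2(\partial K)}^2\le C\big(h_K^{-1}\|\phi_{p}^{h}\|_{L^2(K)}^2+h_K\|\nabla\phi_{p}^{h}\|_{L^2(K)}^2\big)$ combined with the element-wise inverse estimate $\|\nabla\phi_{p}^{h}\|_{L^2(K)}\le C h_K^{-1}\|\phi_{p}^{h}\|_{L^2(K)}$, summed over the elements meeting $\Gamma$. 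Everything else is routine norm bookkeeping, so I expect no real obstacle.
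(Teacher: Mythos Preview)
Your argument is correct and is exactly the standard route to these estimates. Note, however, that the paper does not supply its own proof of this lemma: it is quoted from \cite{MMXZ} and used as a black box, so there is nothing in the present paper to compare your proof against. Your derivation---Cauchy--Schwarz on $\Gamma$, continuous trace plus the spectral bound $\mathds{K}\ge\mathds{K}_{min}I$ for (\ref{gammal}), and the sharp discrete trace $\|\phi_p^h\|_{L^2(\Gamma)}\le C h^{-1/2}\|\phi_p^h\|_p$ for (\ref{gamma2}), followed by Young's inequality---is precisely how the cited reference obtains the result. Your remark that chaining the global trace (\ref{t}) with the global inverse inequality (\ref{inverse}) loses a power of $h$ is on point and is the one place where a careless argument would go wrong.
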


\begin{lemma}\cite{AFDS} \label{gronwall}(Discrete Gronwall Inequality)
Let $\Delta t$, $C$,  $a_i$, $b_i$, $c_i$, $d_i$, (for integers $n\geq0$) be non-negative numbers such that
\begin{eqnarray}\label{gronwall1}
a_n+\Delta t \sum_{i=0}^{n}b_i\leq+\Delta t \sum_{i=0}^{n-1}c_i+C,\ \forall \ n\geq1,
\end{eqnarray}
 then
\begin{eqnarray}\label{gronwall2}
a_n+\Delta t \sum_{i=0}^{n}b_i\leq
exp\left(\Delta t \sum_{i=0}^{n-1}d_i\right)
    \left(\Delta t \sum_{i=0}^{n-1}c_i+C\right),\ \forall \ n\geq1.
\end{eqnarray}
\end{lemma}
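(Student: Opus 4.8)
The plan is to prove (\ref{gronwall2}) by a standard induction on $n$, reading the right-hand side of (\ref{gronwall1}) as also containing the term $\Delta t\sum_{i=0}^{n-1}d_i a_i$ (this is what makes the factors $d_i$ enter the conclusion at all). I would abbreviate $\kappa_n:=\Delta t\sum_{i=0}^{n-1}c_i+C$ and record two elementary facts: $\kappa_n\geq 0$, and $\kappa_n$ is nondecreasing in $n$ because every $c_i$ and $C$ is nonnegative. Since $\Delta t\sum_{i=0}^{n}b_i\geq 0$, discarding it from the left-hand side of (\ref{gronwall1}) yields the weaker scalar recursion $a_n\leq \Delta t\sum_{i=0}^{n-1}d_i a_i+\kappa_n$ for all $n\geq 1$, together with the base estimate $a_0\leq\kappa_0=C$ (the $n=0$ instance, all sums empty).

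Next I would prove by induction on $n$ the product bound $a_n\leq \kappa_n\prod_{i=0}^{n-1}(1+\Delta t\,d_i)$. The base case $n=0$ is exactly $a_0\leq\kappa_0$. For the inductive step I would insert the induction hypotheses $a_i\leq\kappa_i\prod_{j=0}^{i-1}(1+\Delta t\,d_j)$, $i=0,\dots,n-1$, into $a_n\leq \Delta t\sum_{i=0}^{n-1}d_i a_i+\kappa_n$, then use $\kappa_i\leq\kappa_n$ to factor out $\kappa_n$, obtaining $a_n\leq\kappa_n\bigl(1+\Delta t\sum_{i=0}^{n-1}d_i\prod_{j=0}^{i-1}(1+\Delta t\,d_j)\bigr)$. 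The key algebraic step is the telescoping identity $\Delta t\sum_{i=0}^{n-1}d_i\prod_{j=0}^{i-1}(1+\Delta t\,d_j)=\prod_{i=0}^{n-1}(1+\Delta t\,d_i)-1$ (equivalently $P_k-P_{k-1}=\Delta t\,d_{k-1}P_{k-1}$ with $P_k=\prod_{j=0}^{k-1}(1+\Delta t\,d_j)$, summed over $k$), which closes the induction to $a_n\leq\kappa_n\prod_{i=0}^{n-1}(1+\Delta t\,d_i)$. It is worth emphasizing that no smallness assumption on $\Delta t$ is needed: the $d_i$-term in (\ref{gronwall1}) is explicit, coupling only to indices $i\leq n-1$, so each factor $1+\Delta t\,d_i$ is automatically positive.

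Finally I would pass from the product to the exponential via $1+x\leq e^{x}$ for $x\geq 0$, so $\prod_{i=0}^{n-1}(1+\Delta t\,d_i)\leq\exp\bigl(\Delta t\sum_{i=0}^{n-1}d_i\bigr)$. To recover the full left-hand side of (\ref{gronwall2}) I would not throw away $\Delta t\sum_{i=0}^{n}b_i$; instead I return to (\ref{gronwall1}) with its $d_i a_i$ term, substitute the just-proved bound on $a_i$ for $i\leq n-1$, and repeat the same factoring/telescoping to get $a_n+\Delta t\sum_{i=0}^{n}b_i\leq\kappa_n\prod_{i=0}^{n-1}(1+\Delta t\,d_i)\leq\exp\bigl(\Delta t\sum_{i=0}^{n-1}d_i\bigr)\bigl(\Delta t\sum_{i=0}^{n-1}c_i+C\bigr)$, which is exactly (\ref{gronwall2}). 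The step I expect to need the most care is the bookkeeping: the summation ranges are mismatched ($b_i$ runs to $n$, but $c_i$ and $d_i$ only to $n-1$), and one must secure the base value $a_0\leq C$; once those are pinned down, the remainder is the routine telescoping-plus-exponential estimate. Since the statement is quoted from \cite{AFDS}, one may alternatively just cite it, but the self-contained argument above is short enough to include.
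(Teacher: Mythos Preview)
The paper does not prove Lemma~\ref{gronwall}; it simply quotes the result from \cite{AFDS}. Your self-contained argument is therefore strictly more than what the paper supplies, and the route you outline---drop the $b_i$-sum, prove the product bound $a_n\leq\kappa_n\prod_{i=0}^{n-1}(1+\Delta t\,d_i)$ by induction using the telescoping identity $P_{k}-P_{k-1}=\Delta t\,d_{k-1}P_{k-1}$, then pass to the exponential via $1+x\leq e^{x}$---is the standard proof and is correct.

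You are also right to flag that the printed hypothesis (\ref{gronwall1}) is missing the term $\Delta t\sum_{i=0}^{n-1}d_i a_i$ on the right-hand side; without it the $d_i$ never enter and (\ref{gronwall2}) is a triviality because $\exp(\cdot)\geq 1$. One small bookkeeping point: the base case $a_0\leq C$ that you invoke is not literally contained in (\ref{gronwall1}), which is stated only for $n\geq 1$; in practice one either extends the hypothesis to $n=0$ (empty sums) or absorbs a bound on $a_0$ into the constant $C$, and either reading is consistent with how the lemma is actually applied in the proof of Theorem~\ref{theorem2}.
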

For the rest of the paper, $P=\{t_m\}_{m=0}^N$ is the partition on time interval $t_0=0$, $t_N=T$, $k_m=t_{m+1}-t_m$ is the time step size, and $\tau_m=\frac{k_{m+1}}{k_m}$ is a ratio for the time step and satisfies $\tau_{min}\leq\tau_m\leq\tau_{max}$.
Here $(\vec{\ul}^{h,{m+1}},p_{f}^{h,m+1})=(\vec{u}_{f}^{h,m+1},p_{f}^{h,m+1},\phi_{p}^{h,m+1})$  denote the approximation solutions by $(\vec{\ul}^{h}(t_{m+1}), p_{f}^{h}(t_{m+1}))=(\vec{u}_{f}^{h}(t_{m+1}),p_{f}^{h}(t_{m+1}),\phi_{p}^{h}(t_{m+1}))$.
\subsection{Numberical algorithms}
First, we introduce the coupled and decoupled variable time-stepping algorithms.

\textbf{Algorithm 1.(Coupled variable time-stepping algorithm for Linear Multi-step method plus time filter)}

$\blacktriangledown$
The Linear Multi-step method (First Order):

Give $({\vec{\ul}}^{h,0},\ {\vec{p}}_f^{h,0})$ and $({\vec{\ul}}^{h,1},\ {\vec{p}}_f^{h,1})$,
find $\hat{\vec{\ul}}^{h,m+1}=(\hat{\vec{u}}_{f}^{h,m+1}$, $\hat{\phi}_{p}^{h,m+1})\in X_h$ and $\hat{p}_{f}^{h,m+1}\in Q_{fh}$ with $m=1, 2, ... , N-1$, such that $\forall$ $\vl^{h}\in X_h$ and $q_{f}^{h}\in Q_{fh}$,
\begin{eqnarray}\label{clm}
&&\left(\frac{\hat{\vec{\ul}}^{h,m+1}-\vec{\ul}^{h,m}}{k_m},\vec{\vl}^{h}\right)
+a((1-\theta)\hat{\vec{\ul}}^{h,m+1}+\theta\vec{\ul}^{h,m},\vec{\vl}^{h})
+b(\vec{\vl}^{h},(1-\theta)\hat{p}_{f}^{h,m+1}+\theta p_{f}^{h,m})\nonumber\\
&&=\big<(1-\theta)\vec{F}^{m+1}+\theta\vec{F}^{m},\vec{\vl}^{h}\big>,\\
&&b((1-\theta)\hat{\vec{\ul}}^{h,m+1}+\theta\vec{\ul}^{h,m},q_{f}^{h})=0.\nonumber
\end{eqnarray}

$\blacktriangledown$
The Time Filter (Second Order):

Update the previous solutions $(\hat{\vec{\ul}}^{h,m+1},\hat{p}_{f}^{h,m+1})$ by time filter,
\begin{eqnarray}
\begin{split}\label{ctf}
&\vec{\ul}^{h,m+1}=\hat{\vec{\ul}}^{h,m+1}-\frac{(1-2\theta)(1+\tau_{m-1})\tau_{m-1}}{2(1-\theta)\tau_{m-1}+1}
\left(\frac{1}{1+\tau_{m-1}}\hat{\vec{\ul}}^{h,m+1}-\vec{\ul}^{h,m}+\frac{\tau_{m-1}}{1+\tau_{m-1}}\vec{\ul}^{h,m-1}\right),\\
&p_{f}^{h,m+1}=\hat{p}_{f}^{h,m+1}-\frac{(1-2\theta)(1+\tau_{m-1})\tau_{m-1}}{2(1-\theta)\tau_{m-1}+1}
\left(\frac{1}{1+\tau_{m-1}}\hat{p}_{f}^{h,m+1}-p_{f}^{h,m}+\frac{\tau_{m-1}}{1+\tau_{m-1}}p_{f}^{h,m-1}\right).
\end{split}
\end{eqnarray}

\textbf{Algorithm 2.(decoupled variable time-stepping algorithm for Linear Multi-step method plus time filter)}

$\blacktriangledown$
The Linear Multi-step method (First Order):

Give $({\vec{u}}_{f}^{h,0},\ {p}_{f}^{h,0})$ and  $({\vec{u}}_{f}^{h,1},\ {p}_{f}^{h,1})$,
find $ (\hat{\vec{u}}_f^{h,m+1},\ \hat{p}_{f}^{h,m+1})\in(X_{fh},Q_{fh})$ with $m=1,2,......,N-1$, such that for $\forall$ $\vec{v}_{f}^{h} \in X _{fh}$ and $q_{f}^{h}\in Q_{fh}$,
\begin{eqnarray}\label{ds}
&&\left(\frac{\hat{\vec{u}}_f^{h,m+1}-\vec{u}_{f}^{h,m}}{k_m},\vec{v}_{f}^{h}\right)_{\Omega_f}
+a_{\Omega_f}((1-\theta)\hat{\vec{u}}_f^{h,m+1}+\theta\vec{u}_{f}^{h,m},\vec{v}_{f}^{h})
+b(\vec{v}_{f}^{h},(1-\theta)\hat{p}_{f}^{h,m+1}+\theta p_{f}^{h,m})\nonumber\\
&&=((1-\theta)\vec{g}_{f}^{m+1}+\theta\vec{g}_{f}^{m},\vec{v}_{f}^{h})_{\Omega_f}
-c_{\Gamma}(\vec{v}_{f}^{h},(1+(1-\theta)\tau_{m-1})\phi_{p}^{h,m}
-(1-\theta)\tau_{m-1}\phi_{p}^{h,m-1}),\\
&& b((1-\theta)\hat{\vec{u}}_f^{h,m+1}+\theta\vec{u}_{f}^{h,m},q_{f}^{h})=0.\nonumber
\end{eqnarray}

Give ${\phi}_{p}^{h,0}$ and ${\phi}_{p}^{h,1}$, find $\hat{\phi}_{p}^{h,m+1}\in X_{ph}$ with $m=1,2,......,N-1$, such that for $\forall$ $\psi_{p}^{h} \in X_{ph}$,
\begin{eqnarray}
\begin{split}\label{dd}
&g\left(\frac{\hat{\phi}_{p}^{h,m+1}-\phi_{p}^{h,m}}{k_m},\psi_{p}^{h}\right)_{{\Omega}_p}
+a_{\Omega_p}((1-\theta)\hat{\phi}_{p}^{h,m+1}+\theta\phi_{p}^{h,m},\psi_{p}^{h})\\
&=g((1-\theta)g_{p}^{m+1}+\theta g_{p}^{m},\psi_{p}^{h})_{\Omega_p}
+c_{\Gamma}((1+(1-\theta)\tau_{m-1})\vec{u}_{f}^{h,m}
-(1-\theta)\tau_{m-1}\vec{u}_{f}^{h,m-1},\psi_{p}^{h}).
\end{split}
\end{eqnarray}

$\blacktriangledown$The Time Filter (Second Order):

Update the previous solutions $(\hat{\vec{u}}_{f}^{h,m+1},\hat{p}_{f}^{h,m+1},\hat{\phi}_{p}^{h,m+1})$ by time filter,
\begin{eqnarray}\label{dtf}
\begin{split}
&\vec{u}_{f}^{h,m+1}=\hat{\vec{u}}_{f}^{h,m+1}-\frac{(1-2\theta)(1+\tau_{m-1})\tau_{m-1}}{2(1-\theta)\tau_{m-1}+1}
\left(\frac{1}{1+\tau_{m-1}}\hat{\vec{u}}_{f}^{h,m+1}-\vec{u}_{f}^{h,m}+\frac{\tau_{m-1}}{1+\tau_{m-1}}\vec{u}_{f}^{h,m-1}\right),\\
&p_{f}^{h,m+1}=\hat{p}_{f}^{h,m+1}-\frac{(1-2\theta)(1+\tau_{m-1})\tau_{m-1}}{2(1-\theta)\tau_{m-1}+1}
\left(\frac{1}{1+\tau_{m-1}}\hat{p}_{f}^{h,m+1}-p_{f}^{h,m}+\frac{\tau_{m-1}}{1+\tau_{m-1}}p_{f}^{h,m-1}\right),\\
&\phi_{p}^{h,m+1}=\hat{\phi}_{p}^{h,m+1}-\frac{(1-2\theta)(1+\tau_{m-1})\tau_{m-1}}{2(1-\theta)\tau_{m-1}+1}
\left(\frac{1}{1+\tau_{m-1}}\hat{\phi}_{p}^{h,m+1}-\phi_{p}^{h,m}+\frac{\tau_{m-1}}{1+\tau_{m-1}}\phi_{p}^{h,m-1}\right).
\end{split}
\end{eqnarray}

Then, we introduce coupled and decoupled adaptive algorithms.
Here a combination of some common adaptive method is used to select the time step. $\varepsilon$ denote tolerance,   $\hat{\gamma}$ and $\check{\gamma}$ represent two safety factors, respectively. The role of the first safety factor is to prevent the size of the next step from being too large, thereby reducing the possibility that the next solution will be rejected. The second safety factor is to make the time step increase more slowly, so that the possibility of the recalculated result being accepted increases, and proceeds to the next calculation.
In the numerical experiment of this paper, we choose $\hat{\gamma}=0.9$ and $\check{\gamma}=0.6$.
And using the definition in \cite{APMF}, the $j$th order divided difference describe as $\delta^j y=y[t_{n+m}, t_{n+m-1},...,t_{n+m-j}]$ and the parameter in time filter describe as $\chi^{l+1}=\frac{\prod^{l}_{i=1}(t_{n+m}-t_{n+m-i})}{\prod^{l+1}_{j=1}(t_{n+m}-t_{n+m-j})^{-1}}$.

\textbf{Algorithm 3.(coupled adaptive algorithm for Linear Multi-step method plus time filter)}

Let $m=2$  and give $\varepsilon$, $\hat{\gamma}$, $\check{\gamma}$, $({\vec{\ul}}^{h,m-2},\ {\vec{p}}_f^{h,m-2})$, $({\vec{\ul}}^{h,m-1},\ {\vec{p}}_f^{h,m-1})$ and $({\vec{\ul}}^{h,m},\ {\vec{p}}_f^{h,m})$,
 compute ${\vec{\ul}}^{h,m+1}=({\vec{u}}_{f}^{h,m+1},\ {\phi}_{p}^{h,m+1})$ and ${p}_{f}^{h,m+1}$ by solving

$\blacktriangledown$
The Linear Multi-step method (First Order):
\begin{eqnarray}\label{clm}
&&\left(\frac{\hat{\vec{\ul}}^{h,m+1}-\vec{\ul}^{h,m}}{k_m},\vec{\vl}^{h}\right)
+a((1-\theta)\hat{\vec{\ul}}^{h,m+1}+\theta\vec{\ul}^{h,m},\vec{\vl}^{h})
+b(\vec{\vl}^{h},(1-\theta)\hat{p}_{f}^{h,m+1}+\theta p_{f}^{h,m})\nonumber\\
&&=\big<(1-\theta)\vec{F}^{m+1}+\theta\vec{F}^{m},\vec{\vl}^{h}\big>,\\
&&b((1-\theta)\hat{\vec{\ul}}^{h,m+1}+\theta\vec{\ul}^{h,m},q_{f}^{h})=0.\nonumber
\end{eqnarray}

$\blacktriangledown$
The Time Filter (Second Order):

Update the previous solutions $(\hat{\vec{\ul}}^{h,m+1},\hat{p}_{f}^{h,m+1})$ by time filter,
\begin{eqnarray}
\begin{split}\label{ctf}
&\vec{\ul}^{h,m+1}=\hat{\vec{\ul}}^{h,m+1}-\frac{(1-2\theta)(1+\tau_{m-1})\tau_{m-1}}{2(1-\theta)\tau_{m-1}+1}
\left(\frac{1}{1+\tau_{m-1}}\hat{\vec{\ul}}^{h,m+1}-\vec{\ul}^{h,m}+\frac{\tau_{m-1}}{1+\tau_{m-1}}\vec{\ul}^{h,m-1}\right),\\
&p_{f}^{h,m+1}=\hat{p}_{f}^{h,m+1}-\frac{(1-2\theta)(1+\tau_{m-1})\tau_{m-1}}{2(1-\theta)\tau_{m-1}+1}
\left(\frac{1}{1+\tau_{m-1}}\hat{p}_{f}^{h,m+1}-p_{f}^{h,m}+\frac{\tau_{m-1}}{1+\tau_{m-1}}p_{f}^{h,m-1}\right).
\end{split}
\end{eqnarray}

Choose
\begin{eqnarray*}
EST_{\ul}=\chi^{3}\delta^{3}\vec{\ul}^{h,m+1},
\end{eqnarray*}
that is
\begin{eqnarray*}
EST_{\vec{\u}_f}=\chi^{3}\delta^{3}\vec{\u}_{f}^{h,m+1},\quad EST_{\phi_p}=\chi^{3}\delta^{3}{\phi}_p^{h,m+1}.
\end{eqnarray*}
If $min\{|EST_{\vec{\u}_f}|,|EST_{\phi_p}|\}<\frac{\varepsilon}{4}$,
\begin{eqnarray*}
\sigma_m=min\left\{2,\left(\frac{\varepsilon}{|EST_{\vec{\u}_f}|}\right)^{\frac{1}{3}},
\left(\frac{\varepsilon}{|EST_{\phi_p}|}\right)^{\frac{1}{3}}\right\},\qquad
k_{m+1}=\hat{\gamma}\cdot\sigma_m\cdot k_m,
\end{eqnarray*}
if $\frac{\varepsilon}{4}\leq min\{|EST_{\vec{\u}_f}|,|EST_{\phi_p}|\}\leq\varepsilon$,
\begin{eqnarray*}
\sigma_m=min\left\{1,\left(\frac{\varepsilon}{|EST_{\vec{\u}_f}|}\right)^{\frac{1}{3}},
\left(\frac{\varepsilon}{|EST_{\phi_p}|}\right)^{\frac{1}{3}}\right\},\qquad
k_{m+1}=\hat{\gamma}\cdot\sigma_m\cdot k_{m}.
\end{eqnarray*}
If the above situations are not statify, let
\begin{eqnarray*}
k_{m}=\check{\gamma}\cdot\sigma_{m-1}\cdot k_{m-1}
\end{eqnarray*}
and recompute the above steps.

\textbf{Algorithm 4.(decoupled variable time-stepping algorithm for Linear Multi-step method plus time filter)}

Let m=2 and give $\varepsilon$, $\hat{\gamma}$, $\check{\gamma}$, $({\vec{u}}_{f}^{h,m-2},\ \hat{p}_{f}^{h,m-2},\ \phi_{p}^{h,m-2})$, $({\vec{u}}_{f}^{h,m-1},\ {p}_{f}^{h,m-1},\ \phi_{p}^{h,m-1})$ and $({\vec{u}}_{f}^{h,m},\ {p}_{f}^{h,m},\ \phi_{p}^{h,m})$,
 compute $({\vec{u}}_f^{h,m+1},\ \hat{p}_{f}^{h,m+1},\ \phi_{p}^{m+1})$ by solving

$\blacktriangledown$
The Linear Multi-step method (First Order):
\begin{eqnarray}\label{ds}
&&\left(\frac{\hat{\vec{u}}_f^{h,m+1}-\vec{u}_{f}^{h,m}}{k_m},\vec{v}_{f}^{h}\right)_{\Omega_f}
+a_{\Omega_f}((1-\theta)\hat{\vec{u}}_f^{h,m+1}+\theta\vec{u}_{f}^{h,m},\vec{v}_{f}^{h})
+b(\vec{v}_{f}^{h},(1-\theta)\hat{p}_{f}^{h,m+1}+\theta p_{f}^{h,m})\nonumber\\
&&=((1-\theta)\vec{g}_{f}^{m+1}+\theta\vec{g}_{f}^{m},\vec{v}_{f}^{h})_{\Omega_f}
-c_{\Gamma}(\vec{v}_{f}^{h},(1+(1-\theta)\tau_{m-1})\phi_{p}^{h,m}
-(1-\theta)\tau_{m-1}\phi_{p}^{h,m-1}),\\
&& b((1-\theta)\hat{\vec{u}}_f^{h,m+1}+\theta\vec{u}_{f}^{h,m},q_{f}^{h})=0.\nonumber
\end{eqnarray}
\begin{eqnarray}
\begin{split}\label{dd}
&g\left(\frac{\hat{\phi}_{p}^{h,m+1}-\phi_{p}^{h,m}}{k_m},\psi_{p}^{h}\right)_{{\Omega}_p}
+a_{\Omega_p}((1-\theta)\hat{\phi}_{p}^{h,m+1}+\theta\phi_{p}^{h,m},\psi_{p}^{h})\\
&=g((1-\theta)g_{p}^{m+1}+\theta g_{p}^{m},\psi_{p}^{h})_{\Omega_p}
+c_{\Gamma}((1+(1-\theta)\tau_{m-1})\vec{u}_{f}^{h,m}
-(1-\theta)\tau_{m-1}\vec{u}_{f}^{h,m-1},\psi_{p}^{h}).
\end{split}
\end{eqnarray}

$\blacktriangledown$The Time Filter (Second Order):

Update the previous solutions $(\hat{\vec{u}}_{f}^{h,m+1},\ {p}_{f}^{h,m+1},{\phi}_{p}^{h,m+1})$ by time filter,
\begin{eqnarray}\label{dtf}
\begin{split}
&\vec{u}_{f}^{h,m+1}=\hat{\vec{u}}_{f}^{h,m+1}-\frac{(1-2\theta)(1+\tau_{m-1})\tau_{m-1}}{2(1-\theta)\tau_{m-1}+1}
\left(\frac{1}{1+\tau_{m-1}}\hat{\vec{u}}_{f}^{h,m+1}-\vec{u}_{f}^{h,m}+\frac{\tau_{m-1}}{1+\tau_{m-1}}\vec{u}_{f}^{h,m-1}\right),\\
&p_{f}^{h,m+1}=\hat{p}_{f}^{h,m+1}-\frac{(1-2\theta)(1+\tau_{m-1})\tau_{m-1}}{2(1-\theta)\tau_{m-1}+1}
\left(\frac{1}{1+\tau_{m-1}}\hat{p}_{f}^{h,m+1}-p_{f}^{h,m}+\frac{\tau_{m-1}}{1+\tau_{m-1}}p_{f}^{h,m-1}\right),\\
&\phi_{p}^{h,m+1}=\hat{\phi}_{p}^{h,m+1}-\frac{(1-2\theta)(1+\tau_{m-1})\tau_{m-1}}{2(1-\theta)\tau_{m-1}+1}
\left(\frac{1}{1+\tau_{m-1}}\hat{\phi}_{p}^{h,m+1}-\phi_{p}^{h,m}+\frac{\tau_{m-1}}{1+\tau_{m-1}}\phi_{p}^{h,m-1}\right).
\end{split}
\end{eqnarray}

Choose
\begin{eqnarray*}
EST_{\vec{\u}_f}=\chi^{3}\delta^{3}\vec{\u}_{f}^{h,m+1},\quad EST_{\phi_p}=\chi^{3}\delta^{3}{\phi}_p^{h,m+1}.
\end{eqnarray*}
If $min\{|EST_{\vec{\u}_f}|,|EST_{\phi_p}|\}<\frac{\varepsilon}{4}$,
\begin{eqnarray*}
\sigma_m=min\left\{2,\left(\frac{\varepsilon}{|EST_{\vec{\u}_f}|}\right)^{\frac{1}{3}},
\left(\frac{\varepsilon}{|EST_{\phi_p}|}\right)^{\frac{1}{3}}\right\},\qquad
k_{m+1}=\hat{\gamma}\cdot\sigma_m\cdot k_m,
\end{eqnarray*}
if $\frac{\varepsilon}{4}\leq min\{|EST_{\vec{\u}_f}|,|EST_{\phi_p}|\}\leq\varepsilon$,
\begin{eqnarray*}
\sigma_m=min\left\{1,\left(\frac{\varepsilon}{|EST_{\vec{\u}_f}|}\right)^{\frac{1}{3}},
\left(\frac{\varepsilon}{|EST_{\phi_p}|}\right)^{\frac{1}{3}}\right\},\qquad
k_{m+1}=\hat{\gamma}\cdot\sigma_m\cdot k_{m}.
\end{eqnarray*}
If the above situations are not statify, let
\begin{eqnarray*}
k_{m}=\check{\gamma}\cdot\sigma_{m-1}\cdot k_{m-1}
\end{eqnarray*}
and recompute the above steps.

\textbf{Remark 1:} In the decoupled algorithm, we use the second-order extrapolation method to approximate $\hat{\vec{u}}_{f}^{h,m+1}$ with $(1+\tau_{m-1})\vec{u}_{f}^{h,m}-\tau_{m-1}\vec{u}_{f}^{h,m-1}$ and $\hat{\phi}_{p}^{h,m+1}$ with $(1+\tau_{m-1})\phi_{p}^{h,m}-\tau_{m-1}\phi_{p}^{h,m-1}$ in the interface coupled term. At the same time, whether the pressure $\hat{p}_{f}^{h,m+1}$  is filtered or not has little influence on the result.

For the convenience of comparison in the later experiment, we present the coupled and decoupled adaptive algorithms for the Linear Multi-step method.

\textbf{Algorithm 5.(Coupled adaptive algorithm for Linear Multi-step method)}

Let $m=1$ and give $\varepsilon$, $\hat{\gamma}$, $\check{\gamma}$,
$({\vec{\ul}}^{h,m-1},\ {\vec{p}}_f^{h,m-1})$ and $({\vec{\ul}}^{h,m},\ {\vec{p}}_f^{h,m})$,
compute ${\vec{\ul}}^{h,m+1}=(\hat{\vec{u}}_{f}^{h,m+1}$, $\hat{\phi}_{p}^{h,m+1})$ and ${p}_{f}^{h,m+1}$, by solving

$\blacktriangledown$
The Linear Multi-step method (First Order):
\begin{eqnarray}\label{clm}
&&\left(\frac{{\vec{\ul}}^{h,m+1}-\vec{\ul}^{h,m}}{k_m},\vec{\vl}^{h}\right)
+a((1-\theta){\vec{\ul}}^{h,m+1}+\theta\vec{\ul}^{h,m},\vec{\vl}^{h})
+b(\vec{\vl}^{h},(1-\theta){p}_{f}^{h,m+1}+\theta p_{f}^{h,m})\nonumber\\
&&=\big<(1-\theta)\vec{F}^{m+1}+\theta\vec{F}^{m},\vec{\vl}^{h}\big>,\\
&&b((1-\theta){\vec{\ul}}^{h,m+1}+\theta\vec{\ul}^{h,m},q_{f}^{h})=0.\nonumber
\end{eqnarray}

Choose
\begin{eqnarray*}
EST_{\ul}=\frac{(1-2\theta)(1+\tau_{m-1})\tau_{m-1}}{2(1-\theta)\tau_{m-1}+1}
\left(\frac{1}{1+\tau_{m-1}}{\vec{\ul}}^{h,m+1}-\vec{\ul}^{h,m}+\frac{\tau_{m-1}}{1+\tau_{m-1}}\vec{\ul}^{h,m-1}\right),
\end{eqnarray*}
that is
\begin{eqnarray*}
&EST_{\vec{\u}_f}=\frac{(1-2\theta)(1+\tau_{m-1})\tau_{m-1}}{2(1-\theta)\tau_{m-1}+1}
\left(\frac{1}{1+\tau_{m-1}}{\vec{\u}_f}^{h,m+1}-\vec{\u}_f^{h,m}+\frac{\tau_{m-1}}{1+\tau_{m-1}}\vec{\u}_f^{h,m-1}\right),\\
&EST_{\phi_p}=\frac{(1-2\theta)(1+\tau_{m-1})\tau_{m-1}}{2(1-\theta)\tau_{m-1}+1}
\left(\frac{1}{1+\tau_{m-1}}{\phi}_p^{h,m+1}-{\phi}_p^{h,m}+\frac{\tau_{m-1}}{1+\tau_{m-1}}{\phi}_p^{h,m-1}\right).
\end{eqnarray*}
If $min\{|EST_{\vec{\u}_f}|,|EST_{\phi_p}|\}<\frac{\varepsilon}{4}$,
\begin{eqnarray*}
\sigma_m=min\left\{2,\left(\frac{\varepsilon}{|EST_{\vec{\u}_f}|}\right)^{\frac{1}{2}},
\left(\frac{\varepsilon}{|EST_{\phi_p}|}\right)^{\frac{1}{2}}\right\},\qquad
k_{m+1}=\hat{\gamma}\cdot\sigma_m\cdot k_{m},
\end{eqnarray*}
if $\frac{\varepsilon}{4}\leq min\{|EST_{\vec{\u}_f}|,|EST_{\phi_p}|\}\leq\varepsilon$,
\begin{eqnarray*}
\sigma_m=min\left\{1,\left(\frac{\varepsilon}{|EST_{\vec{\u}_f}|}\right)^{\frac{1}{2}},
\left(\frac{\varepsilon}{|EST_{\phi_p}|}\right)^{\frac{1}{2}}\right\},\qquad
k_{m+1}=\hat{\gamma}\cdot\sigma_m\cdot k_{m}.
\end{eqnarray*}
If the above situations are not statify, let
\begin{eqnarray*}
k_{m}=\check{\gamma}\cdot\sigma_{m-1}\cdot k_{m-1}
\end{eqnarray*}
and recompute the above steps.

\textbf{Algorithm 6.(decoupled variable time-stepping algorithm for Linear Multi-step method)}

Let $m=1$ and give $\varepsilon$, $\hat{\gamma}$, $\check{\gamma}$, $({\vec{u}}_{f}^{h,m-1},\ {p}_{f}^{h,m-1},\ \phi_{p}^{h,m-1})$ and  $({\vec{u}}_{f}^{h,m},\ {p}_{f}^{h,m},\ \phi_{p}^{h,m})$,
 compute $({\vec{u}}_f^{h,m+1},\ {p}_{f}^{h,m+1},\ {\phi_{p}^{h,m+1}})$ by solving

$\blacktriangledown$
The Linear Multi-step method (First Order):
\begin{eqnarray}
&&\left(\frac{{\vec{u}}_f^{h,m+1}-\vec{u}_{f}^{h,m}}{k_m},\vec{v}_{f}^{h}\right)_{\Omega_f}
+a_{\Omega_f}((1-\theta){\vec{u}}_f^{h,m+1}+\theta\vec{u}_{f}^{h,m},\vec{v}_{f}^{h})
+b(\vec{v}_{f}^{h},(1-\theta){p}_{f}^{h,m+1}+\theta p_{f}^{h,m})\nonumber\\
&&=((1-\theta)\vec{g}_{f}^{m+1}+\theta\vec{g}_{f}^{m},\vec{v}_{f}^{h})_{\Omega_f}
-c_{\Gamma}(\vec{v}_{f}^{h},\phi_{p}^{h,m}),\\
&& b((1-\theta){\vec{u}}_f^{h,m+1}+\theta\vec{u}_{f}^{h,m},q_{f}^{h})=0.\nonumber
\end{eqnarray}
\begin{eqnarray}
\begin{split}
&g\left(\frac{\hat{\phi}_{p}^{h,m+1}-\phi_{p}^{h,m}}{k_m},\psi_{p}^{h}\right)_{{\Omega}_p}
+a_{\Omega_p}((1-\theta)\hat{\phi}_{p}^{h,m+1}+\theta\phi_{p}^{h,m},\psi_{p}^{h})\\
&=g((1-\theta)g_{p}^{m+1}+\theta g_{p}^{m},\psi_{p}^{h})_{\Omega_p}
+c_{\Gamma}(\vec{u}_{f}^{h,m},\psi_{p}^{h}).
\end{split}
\end{eqnarray}

Choose
\begin{eqnarray*}
&EST_{\vec{\u}_f}=\frac{(1-2\theta)(1+\tau_{m-1})\tau_{m-1}}{2(1-\theta)\tau_{m-1}+1}
\left(\frac{1}{1+\tau_{m-1}}{\vec{\u}_f}^{h,m+1}-\vec{\u}_f^{h,m}+\frac{\tau_{m-1}}{1+\tau_{m-1}}\vec{\u}_f^{h,m-1}\right),\\
&EST_{\phi_p}=\frac{(1-2\theta)(1+\tau_{m-1})\tau_{m-1}}{2(1-\theta)\tau_{m-1}+1}
\left(\frac{1}{1+\tau_{m-1}}{\phi}_p^{h,m+1}-{\phi}_p^{h,m}+\frac{\tau_{m-1}}{1+\tau_{m-1}}{\phi}_p^{h,m-1}\right).
\end{eqnarray*}
If $min\{|EST_{\vec{\u}_f}|,|EST_{\phi_p}|\}<\frac{\varepsilon}{4}$,
\begin{eqnarray*}
\sigma_m=min\left\{2,\left(\frac{\varepsilon}{|EST_{\vec{\u}_f}|}\right)^{\frac{1}{2}},
\left(\frac{\varepsilon}{|EST_{\phi_p}|}\right)^{\frac{1}{2}}\right\},\qquad
k_{m+1}=\hat{\gamma}\cdot\sigma_m\cdot k_{m},
\end{eqnarray*}
if $\frac{\varepsilon}{4}\leq min\{|EST_{\vec{\u}_f}|,|EST_{\phi_p}|\}\leq\varepsilon$,
\begin{eqnarray*}
\sigma_m=min\left\{1,\left(\frac{\varepsilon}{|EST_{\vec{\u}_f}|}\right)^{\frac{1}{2}},
\left(\frac{\varepsilon}{|EST_{\phi_p}|}\right)^{\frac{1}{2}}\right\},\qquad
k_{m+1}=\hat{\gamma}\cdot\sigma_m\cdot k_{m}.
\end{eqnarray*}
If the above situations are not statify, let
\begin{eqnarray*}
k_{m}=\check{\gamma}\cdot\sigma_{m-1}\cdot k_{m-1}
\end{eqnarray*}
and recompute the above steps.

\textbf{Remark 2:} In the decoupled algorithm for the Linear Multi-step method, we use the first-order extrapolation method to approximate $\hat{\vec{u}}_{f}^{h,m+1}$ with $\vec{u}_{f}^{h,m}$ and $\hat{\phi}_{p}^{h,m+1}$ with $\phi_{p}^{h,m}$ in the interface coupled term. Similarly, whether the pressure $\hat{p}_{f}^{h,m+1}$  is filtered or not has little influence on the result.

We define some notations for the following analysis:
\begin{align*}
&A(\vec{\ul}^{h,m+1})=\frac{2(1-\theta)\tau_{m-1}+1}{\tau_{m-1}+1}\vec{\ul}^{h,m+1}
-((1-2\theta)\tau_{m-1}+1)\vec{\ul}^{h,m}
+\frac{(1-2\theta)\tau_{m-1}^2}{\tau_{m-1}+1}\vec{\ul}^{h,m-1},\\
&B(\vec{\ul}^{h,m+1})=\frac{2(1-\theta)^2\tau_{m-1}+1-\theta}{\tau_{m-1}+1}\vec{\ul}^{h,m+1}
         -((1-\theta)(1-2\theta)\tau_{m-1}-\theta)\vec{\ul}^{h,m}
         +\frac{(1-\theta)(1-2\theta)\tau_{m-1}^2}{\tau_{m-1}+1}\vec{\ul}^{h,m-1},\\
&S(\vec{F}^{m+1})=-\frac{(1-\theta)(1-2\theta)\tau_{m-1}}{\tau_{m-1}+1}\vec{F}^{m+1}
   +((1-\theta)(1-2\theta)\tau_{m-1}-\theta)\vec{F}^{m}
   -\frac{(1-\theta)(1-2\theta)\tau_{m-1}^2}{\tau_{m-1}+1}\vec{F}^{m-1},\\
&W(\eta_{\phi_p}^{h,m+1})=(2(1-\theta)^2\tau_{m-1}+1-\theta)\eta_{\phi_p}^{h,m}
-\frac{2(1-\theta)^2\tau_{m-1}^2+(1-\theta)\tau_{m-1}}{\tau_{m-1}+1}\eta_{\phi_p}^{h,m-1}
-\frac{2(1-\theta)^2\tau_{m-1}+1-\theta}{\tau_{m-1}+1}\eta_{\phi_p}^{h,m+1},\\
&D(\theta,\tau_{m-1})
=\frac{2(1-\theta)(5-6\theta)\tau_{m-1}^2+(4\theta^2-16\theta+11)\tau_{m-1}+3-2\theta}
{2(\tau_{m-1}+1)^2}>0,\\
&H(\theta,\tau_{m-1})
=\frac{2(3-4\theta)\tau_{m-1}^2+8(1-\theta)\tau_{m-1}+2}
{2(\tau_{m-1}+1)^2}>0,\\
&E(\theta,\tau_{m-1})
=\frac{2(1-2\theta)(2-3\theta)\tau_{m-1}^2+(1-2\theta)(3-2\theta)\tau_{m-1}+1-2\theta}
{2(\tau_{m-1}+1)^2}>0,\\
&F(\theta,\tau_{m-1})
=\frac{12(1-\theta)(1-2\theta)\tau_{m-1}^2+2(1-2\theta)(5-2\theta)\tau_{m-1}+2(1-2\theta)}
{2(\tau_{m-1}+1)^2}>0,\\
&G(\theta,\tau_{m-1})
=\frac{6(1-\theta)(1-2\theta)\tau_{m-1}^2+(1-2\theta)(5-2\theta)\tau_{m-1}+1-2\theta}
      {2(\tau_{m-1}+1)^2}
 \cdot
 \frac{6(1-\theta)\tau_{m-1}^2+(5-2\theta)\tau_{m-1}+1}
      {2(2-3\theta)\tau_{m-1}^2+(3-2\theta)\tau_{m-1}+1}\\
&\qquad\qquad\ \ =\frac{(3\tau_{m-1}+1)^2(2\tau_{m-1}\theta-2\tau_{m-1}-1)^2(-1+2\theta)}
{2(\tau_{m-1}+1)^2(6\tau_{m-1}^2\theta-4\tau_{m-1}^2+2\tau_{m-1}\theta-3\tau_{m-1}-1)}>0,\\
&I(\theta,\tau_{m-1})=D(\theta,\tau_{m-1})-G(\theta,\tau_{m-1})
=\frac{(\tau_{m-1}+1)(2\tau_{m-1}\theta-2\tau_{m-1}-1)}
{6\tau_{m-1}^2\theta-4\tau_{m-1}^2+2\tau_{m-1}\theta-3\tau_{m-1}-1}>0.
\end{align*}
\subsection{Stabilities analysis}
First, we give following stability theorem of coupled variable time-stepping algorithm(\textbf{Algorthm 1}).
\begin{theorem}\label{theorem1} (Stability of \textbf{Algorthm 1}) Let $\vec{\ul}^{h,m+1}$ be the solution of the Linear Multi-step methods plus time filter with $0<\theta<\frac{1}{2}$. For $N\geq 2$, we have
\begin{align*}
&I(\theta,\tau)\|\vec{\ul}^{h,N}\|^2_0
+C_{coe}\sum_{m=1}^{N-1}\big[k_m\|B(\vec{\ul}^{h,m+1})\|_X^2\big]\\
\leq& C \big(\|\vec{g}_f\|^2_{L^2(0,T;L^2(\Omega_f))}+\|g_{p}\|^2_{L^2(0,T;L^2(\Omega_p))}
+\|\vec{\ul}^{h,1}\|^2_0+\|\vec{\ul}^{h,0}\|^2_0+\|\vec{\ul}^{h,1}\|_0\|\vec{\ul}^{h,0}\|_0\big),
\end{align*}
where
C is a positive constant, which is independent of h, $k_m$ or other parameters and $\tau_{min}\leq\tau\leq\tau_{max}$.
\end{theorem}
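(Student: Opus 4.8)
The plan is to first absorb the time filter into the Linear Multi-step step so that the scheme of Algorithm 1 becomes a single variable-step three-level method, and then to run a $G$-stability energy estimate on that reduced scheme. To set it up I would eliminate the hatted unknowns: solving the filter formula (\ref{ctf}) for $\hat{\vec{\ul}}^{h,m+1}$ (and likewise $\hat{p}_{f}^{h,m+1}$) in terms of the filtered values at levels $m+1,m,m-1$ and substituting into (\ref{clm}), a direct computation gives $\hat{\vec{\ul}}^{h,m+1}-\vec{\ul}^{h,m}=A(\vec{\ul}^{h,m+1})$ and $(1-\theta)\hat{\vec{\ul}}^{h,m+1}+\theta\vec{\ul}^{h,m}=B(\vec{\ul}^{h,m+1})$, with $A(\cdot)$, $B(\cdot)$ the combinations introduced just before the theorem. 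Hence for all $\vec{\vl}^{h}\in X_h$, $q_{f}^{h}\in Q_{fh}$,
\[
\left(\frac{A(\vec{\ul}^{h,m+1})}{k_m},\vec{\vl}^{h}\right)+a\big(B(\vec{\ul}^{h,m+1}),\vec{\vl}^{h}\big)+b\big(\vec{\vl}^{h},B(p_{f}^{h,m+1})\big)=\big\langle(1-\theta)\vec{F}^{m+1}+\theta\vec{F}^{m},\vec{\vl}^{h}\big\rangle,
\]
together with $b\big(B(\vec{\ul}^{h,m+1}),q_{f}^{h}\big)=0$. Note the right-hand side is untouched by the filter, so no extra forcing appears at this stage; the quantities $S(\vec{F}^{m+1})$ and $W(\eta_{\phi_p}^{h,m+1})$ are reserved for the error analysis.

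Next I would test this identity with $\vec{\vl}^{h}=2k_mB(\vec{\ul}^{h,m+1})$. The pressure term disappears since $B(p_{f}^{h,m+1})\in Q_{fh}$ and the discrete incompressibility constraint gives $b(B(\vec{\ul}^{h,m+1}),B(p_{f}^{h,m+1}))=0$; the interface part of $a$ disappears since $a_\Gamma(B(\vec{\ul}^{h,m+1}),B(\vec{\ul}^{h,m+1}))=0$ by (\ref{agamma}); and coercivity (\ref{bilinearform}) yields $a(B(\vec{\ul}^{h,m+1}),B(\vec{\ul}^{h,m+1}))\ge C_{coe}\|B(\vec{\ul}^{h,m+1})\|_X^2$. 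This reduces the scheme to
\[
2\big(A(\vec{\ul}^{h,m+1}),B(\vec{\ul}^{h,m+1})\big)+2k_mC_{coe}\|B(\vec{\ul}^{h,m+1})\|_X^2\le 2k_m\big\langle(1-\theta)\vec{F}^{m+1}+\theta\vec{F}^{m},B(\vec{\ul}^{h,m+1})\big\rangle .
\]
The right-hand side is then handled by Cauchy--Schwarz and Young's inequality, bounding the dual norm of $\vec{F}$ by the data through the Poincar\'e and trace inequalities: $2k_m\langle(1-\theta)\vec{F}^{m+1}+\theta\vec{F}^{m},B(\vec{\ul}^{h,m+1})\rangle\le k_mC_{coe}\|B(\vec{\ul}^{h,m+1})\|_X^2+Ck_m(\|\vec{g}_f^{m+1}\|_f^2+\|\vec{g}_f^{m}\|_f^2+\|g_p^{m+1}\|_p^2+\|g_p^{m}\|_p^2)$, and the first term is absorbed on the left, leaving $C_{coe}k_m\|B(\vec{\ul}^{h,m+1})\|_X^2$.

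The heart of the argument is the term $2(A(\vec{\ul}^{h,m+1}),B(\vec{\ul}^{h,m+1}))$. I would invoke Lemma \ref{lemma} with $\vartheta_2,\vartheta_1,\vartheta_0$ replaced by $\vec{\ul}^{h,m+1},\vec{\ul}^{h,m},\vec{\ul}^{h,m-1}$ (the scalar inequality lifts to the $\|\cdot\|_0$ inner product, since it says a certain symmetric matrix is positive semidefinite, and Gram matrices of such a matrix are nonnegative), reading the $\alpha_i$ off $A$ and the $\beta_i$ off $B$; here $\delta=\beta_2-\tfrac{\alpha_2}{2}=\frac{(1-2\theta)(2(1-\theta)\tau_{m-1}+1)}{2(\tau_{m-1}+1)}>0$ for $0<\theta<\tfrac12$, so the hypothesis holds. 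The resulting lower bound rearranges into a telescoping $G$-type quantity $\Phi_m$ built from $\|\vec{\ul}^{h,m+1}\|_0^2$, $\|\vec{\ul}^{h,m}\|_0^2$ and the cross term $(\vec{\ul}^{h,m+1},\vec{\ul}^{h,m})_0$, plus strictly positive remainders whose coefficients are precisely $D(\theta,\tau_{m-1})$, $H(\theta,\tau_{m-1})$, $E(\theta,\tau_{m-1})$, $F(\theta,\tau_{m-1})$, $G(\theta,\tau_{m-1})$, all positive for $0<\theta<\tfrac12$ and $\tau_{min}\le\tau_{m-1}\le\tau_{max}$. Summing over $m=1,\dots,N-1$, the $\Phi_m$ telescope, the remainders are discarded, the end value is bounded below via $\Phi_{N-1}\ge I(\theta,\tau)\|\vec{\ul}^{h,N}\|_0^2$ with $I=D-G>0$, the initial value $\Phi_0$ produces the $\|\vec{\ul}^{h,1}\|_0^2$, $\|\vec{\ul}^{h,0}\|_0^2$, $\|\vec{\ul}^{h,1}\|_0\|\vec{\ul}^{h,0}\|_0$ terms, and $\sum_m k_m(\cdots)\le C(\|\vec{g}_f\|_{L^2(0,T;L^2(\Omega_f))}^2+\|g_p\|_{L^2(0,T;L^2(\Omega_p))}^2)$ collects the data; a residual $\sum_m k_m\|\vec{\ul}^{h,m}\|_0^2$, if one survives the rearrangement, is removed by the discrete Gronwall inequality, Lemma \ref{gronwall}.

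The main obstacle is this last step: because the coefficients of $A$, $B$, of $\Phi_m$, and of the positive remainders all depend on $m$ through the step ratio $\tau_{m-1}$, the telescoping of the $\Phi_m$ is not literal, and one must show that these $m$-dependent quadratic forms remain sign-definite uniformly over the admissible window $[\tau_{min},\tau_{max}]$. Establishing the positivity of $D,H,E,F,G,I$ on $0<\theta<\tfrac12$ together with this ratio window — which is what ultimately fixes the allowed $\tau_{min},\tau_{max}$ — and arranging the leftover cross terms so they are absorbed by the positive remainders, is the delicate part; everything else is the routine coercivity/Young bookkeeping sketched above.
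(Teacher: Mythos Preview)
Your proposal is essentially the paper's proof: eliminate the filter to obtain the three-level scheme in $(A,B)$ form, test with $2k_mB(\vec{\ul}^{h,m+1})$, invoke Lemma~\ref{lemma} on the discrete time derivative, use coercivity on $a(\cdot,\cdot)$ and Young on the forcing, then sum. Two clarifications on the points you flagged as obstacles: first, no Gronwall is needed in the coupled case, so your ``if one survives'' hedge resolves to ``none does''; second, the paper does not build an abstract $G$-matrix $\Phi_m$ but instead writes $i(\theta,\tau_{m-1})=C_i\,i(\theta,\tau)$ for a fixed reference $\tau\in[\tau_{min},\tau_{max}]$, bounds all four ratios below by $C_{min}$, and then the sum telescopes exactly because the identity $D(\theta,\tau)=H(\theta,\tau)+E(\theta,\tau)$ kills all interior terms while the $F$-cross terms telescope trivially; the terminal cross term is split via $-F\|\vec{\ul}^{h,N}\|_0\|\vec{\ul}^{h,N-1}\|_0\ge -G\|\vec{\ul}^{h,N}\|_0^2-E\|\vec{\ul}^{h,N-1}\|_0^2$, producing the coefficient $I=D-G$.
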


\begin{proof}
From (\ref{ctf}), we get
$$\hat{\vec{\ul}}^{h,m+1}=\frac{2(1-\theta)\tau_{m-1}+1}{\tau_{m-1}+1}\vec{\ul}^{h,m+1}
-(1-2\theta)\tau_{m-1}\vec{\ul}^{h,m}
+\frac{(1-2\theta)\tau_{m-1}^2}{\tau_{m-1}+1}\vec{\ul}^{h,m-1},$$
$$\hat{p}_{f}^{h,m+1}=\frac{2(1-\theta)\tau_{m-1}+1}{\tau_{m-1}+1}p_{f}^{h,m+1}
-(1-2\theta)\tau_{m-1}p_{f}^{h,m}
+\frac{(1-2\theta)\tau_{m-1}^2}{\tau_{m-1}+1}p_{f}^{h,m-1},$$
and then take them into (\ref{clm}), for $\forall\ \vec{\vl}^{h}\in X_h, \ q_{f}^{h}\in Q_{fh}$,
\begin{align}\label{coupledsum}
&\frac{1}{k_m}\big(A(\vec{\ul}^{h,m+1}),\vec{\vl}^{h}\big)+a\big(B(\vec{\ul}^{h,m+1}),\vec{\vl}^{h}\big)
+b\big(\vec{\vl}^{h},B(p_{f}^{h,m+1})\big)
=\big<(1-\theta)\vec{F}^{m+1}+\theta\vec{F}^{m},\vec{\vl}^{h}\big>,\\
&b\big(B(\vec{\ul}^{h,m+1}), q_{f}^{h}\big)=0.\nonumber
\end{align}

Setting $\vec{\vl}^{h}=2k_mB(\vec{\ul}^{h,m+1})$, $q_{f}^{h}=2k_mB(p_{f}^{h,m+1})$
and analysing each term of the first equation in (\ref{coupledsum}). First since $\frac{2(1-\theta)^2\tau_{m-1}+1-\theta}{\tau_{m-1}+1}
-\frac{1}{2}\frac{2(1-\theta)\tau_{m-1}+1}{\tau_{m-1}+1}>0$,
we can use Lemma \ref{lemma} to handle the first term on the left-hand side
\begin{align}
\begin{split}\label{cl1}
&2\big(A(\vec{\ul}^{h,m+1}),B(\vec{\ul}^{h,m+1})\big)\\
\geq&
D(\theta,\tau_{m-1})\|\vec{\ul}^{h,m+1}\|^2_0
-H(\theta,\tau_{m-1})\|\vec{\ul}^{h,m}\|^2_0
-E(\theta,\tau_{m-1})\|\vec{\ul}^{h,m-1}\|^2_0\\
&-F(\theta,\tau_{m-1})\big(\|\vec{\ul}^{h,m+1}\|_0\|\vec{\ul}^{h,m}\|_0
-\|\vec{\ul}^{h,m}\|_0\|\vec{\ul}^{h,m-1}\|_0\big)\\
=&
C_{D}D(\theta,\tau)\|\vec{\ul}^{h,m+1}\|^2_0
-C_{H}H(\theta,\tau)\|\vec{\ul}^{h,m}\|^2_0
-C_{E}E(\theta,\tau)\|\vec{\ul}^{h,m-1}\|^2_0\\
&-C_{F}F(\theta,\tau)\big(\|\vec{\ul}^{h,m+1}\|_0\|\vec{\ul}^{h,m}\|_0
-\|\vec{\ul}^{h,m}\|_0\|\vec{\ul}^{h,m-1}\|_0\big)\\
\geq&
C_{min}\bigg(D(\theta,\tau)\|\vec{\ul}^{h,m+1}\|^2_0
-H(\theta,\tau)\|\vec{\ul}^{h,m}\|^2_0
-E(\theta,\tau)\|\vec{\ul}^{h,m-1}\|^2_0\\
&-F(\theta,\tau)\big(\|\vec{\ul}^{h,m+1}\|_0\|\vec{\ul}^{h,m}\|_0
-\|\vec{\ul}^{h,m}\|_0\|\vec{\ul}^{h,m-1}\|_0\big)\bigg).
\end{split}
\end{align}
where $C_{min}=\min\{C_{D},C_{H},C_{E},C_{F}\}$, ($C_{i}=\frac{i(\theta,\tau_{m-1})}{i(\theta,\tau)}>0,i=D,H,E,F.$) and $\tau_{min}\leq\tau\leq\tau_{max}$.

Then using the coercivity of the bilinear form $a(\cdot,\cdot)$, the second term on the left can be handled as
\begin{align}\label{cl2}
2k_ma\big(B(\vec{\ul}^{h,m+1}),B(\vec{\ul}^{h,m+1})\big)\geq2C_{coe}k_m\|B(\vec{\ul}^{h,m+1})\|_X^2.
\end{align}
Finally, we use the Cauchy Schwarz inequality and Young's inequality,
the external force term on the right can be written as
\begin{align}\label{cr1}
&2k_m\big<(1-\theta)\vec{F}^{m+1}+\theta\vec{F}^{m},B(\vec{\ul}^{h,m+1})\big>\\
&\leq
\frac{\theta^2k_m}{C_{coe}}\|\vec{F}^{m}\|_{X'}^2
+\frac{(1-\theta)^2k_m}{C_{coe}} \|\vec{F}^{m+1}\|_{X'}^2+C_{coe}k_m\|B(\vec{\ul}^{h,m+1})\|_X^2.\nonumber
\end{align}

Combining the (\ref{cl1})-(\ref{cr1}) and sum them over $m=1,2,...,N-1$, and let $k=\max \limits_{1\leq m\leq N-1}\{k_m\}$, we have
\begin{align*}
&C_{min}\bigg(D(\theta,\tau)\|\vec{\ul}^{h,N}\|^2_0
+E(\theta,\tau)\|\vec{\ul}^{h,N-1}\|^2_0
-F(\theta,\tau)\|\vec{\ul}^{h,N}\|_0\|\vec{\ul}^{h,N-1}\|_0\bigg)
+C_{coe}\sum_{m=1}^{N-1}\big[k_m\|B(\vec{\ul}^{h,m+1})\|_X^2\big]\\
\leq&\frac{k\theta^2}{C_{coe}}\sum_{m=1}^{N-1}\|\vec{F}^{m}\|_{X'}^2
    +\frac{k(1-\theta)^2}{C_{coe}}\sum_{m=1}^{N-1}\|\vec{F}^{m+1}\|_{X'}^2\\
&+C_{min}\bigg(D(\theta,\tau)\|\vec{\ul}^{h,1}\|^2_0
+E(\theta,\tau)\|\vec{\ul}^{h,0}\|^2_0
-F(\theta,\tau)\|\vec{\ul}^{h,1}\|_0\|\vec{\ul}^{h,0}\|_0\bigg).\nonumber
\end{align*}
Note that
\begin{align*}
-F(\theta,\tau)\|\vec{\ul}^{h,N}\|_0\|\vec{\ul}^{h,N-1}\|_0
\geq-G(\theta,\tau)\|\vec{\ul}^{h,N}\|_0^2-E(\theta,\tau)\|\vec{\ul}^{h,N-1}\|^2_0,
\end{align*}
so we have
\begin{align*}
&C_{min}I(\theta,\tau)\|\vec{\ul}^{h,N}\|^2_0
+C_{coe}\sum_{m=1}^{N-1}\big[k_m\|B(\vec{\ul}^{h,m+1})\|_X^2\big]\\
\leq&\frac{k\theta^2}{C_{coe}}\sum_{m=1}^{N-1}\|\vec{F}^{m}\|_{X'}^2
    +\frac{k(1-\theta)^2}{C_{coe}}\sum_{m=1}^{N-1}\|\vec{F}^{m+1}\|_{X'}^2\\
&+C_{min}\bigg(D(\theta,\tau)\|\vec{\ul}^{h,1}\|^2_0
+E(\theta,\tau)\|\vec{\ul}^{h,0}\|^2_0
-F(\theta,\tau)\|\vec{\ul}^{h,1}\|_0\|\vec{\ul}^{h,0}\|_0\bigg).
\end{align*}
Thus, we end the proof.
\end{proof}

Then, we derive following stability theorem of decoupled variable time-stepping algorithm(\textbf{Algorthm 2}).
\begin{theorem}\label{theorem2} (Stability of \textbf{Algorthm 2})
Let $\vec{u}_{f}^{h,m+1}$, $\phi_{p}^{h,m+1}$ be the solution of the Linear Multi-step methods plus time filter with $0<\theta<\frac{1}{2}$. For $N\geq 2$, we have
\begin{align*}
&I(\theta,\tau)\|\vec{u}_{f}^{h,N}\|^2_{f}+gI(\theta,\tau)\|\phi_p^{h,N}\|^2_{p}
+\hat{C}_{coe}\sum_{m=1}^{N-1}\big[k_m\|B(\vec{u}_{f}^{h,m+1})\|_{X_f}^2\big]
+g\check{C}_{coe}\sum_{m=1}^{N-1}\big[k_m\|B(\phi_{p}^{h,m+1})\|_{X_p}^2\big]\\
\leq &C(T)
\bigg(\|\vec{g}_{f}\|_{L^2(0,T;L^2(\Omega_f))}^2+\|g_{p}\|_{L^2(0,T;L^2(\Omega_p))}^2\\
&+C_{min}\big(D(\theta,\tau)\|\vec{u}_{f}^{h,1}\|^2_f
+E(\theta,\tau)\|\vec{u}_{f}^{h,0}\|^2_f
-F(\theta,\tau)\|\vec{u}_{f}^{h,1}\|_f\|\vec{u}_{f}^{h,0}\|_f\big)\\
&+gC_{min}\big(D(\theta,\tau)\|\phi_{p}^{h,1}\|^2_p
+E(\theta,\tau)\|\phi_{p}^{h,0}\|^2_p
-F(\theta,\tau)\|\phi_{p}^{h,1}\|_p\|\phi_{p}^{h,0}\|_p\big)\bigg).
\end{align*}
where
$C(T)=exp\left(\sum \limits_{m=1}^{N-1}\max\left\{\frac{2k}{\hat{C}_{coe}C_{min}I(\theta,\tau)},
\frac{2k}{\check{C}_{coe}C_{min}I(\theta,\tau)}
\right\}\right)$ and $\tau_{min}\leq\tau\leq\tau_{max}$.
\end{theorem}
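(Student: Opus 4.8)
The plan is to mirror the argument used for Theorem~\ref{theorem1}, but now to carry the Stokes sub-problem (\ref{ds}) and the Darcy sub-problem (\ref{dd}) separately and to pay special attention to the interface terms, which in the decoupled scheme no longer cancel exactly. First I would use the filter relations (\ref{dtf}) to express $\hat{\vec u}_{f}^{h,m+1}$, $\hat p_{f}^{h,m+1}$ and $\hat\phi_{p}^{h,m+1}$ as linear combinations of the filtered iterates at levels $m+1,m,m-1$, substitute these into (\ref{ds})--(\ref{dd}), and recast both equations in the compact form built from the operators $A(\cdot)$ and $B(\cdot)$, exactly as in (\ref{coupledsum}). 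Then I would test the Stokes equation with $\vec v_{f}^{h}=2k_mB(\vec u_{f}^{h,m+1})$ together with $q_{f}^{h}=2k_mB(p_{f}^{h,m+1})$ — so that the $b(\cdot,\cdot)$ terms drop by the discrete divergence constraint — and the Darcy equation with $\psi_{p}^{h}=2k_mB(\phi_{p}^{h,m+1})$, and add the two resulting identities.

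The four groups of terms are then handled as follows. For the discrete time-derivative terms I would invoke Lemma~\ref{lemma} to produce the $D,H,E,F$-combinations of $\|\vec u_{f}^{h,\cdot}\|_{f}$ and $g\|\phi_{p}^{h,\cdot}\|_{p}$, exactly as in (\ref{cl1}); since $D(\theta,\tau)=H(\theta,\tau)+E(\theta,\tau)$, after summation in $m$ the quadratic terms telescope to leave $I(\theta,\tau)\|\vec u_{f}^{h,N}\|_{f}^{2}+gI(\theta,\tau)\|\phi_{p}^{h,N}\|_{p}^{2}$ on the left and initial-data terms on the right, the cross term $-F\|\cdot\|\|\cdot\|$ being absorbed via $-F\|\cdot\|\|\cdot\|\ge -G\|\cdot\|^{2}-E\|\cdot\|^{2}$ and $I=D-G$ precisely as in the proof of Theorem~\ref{theorem1}. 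The diffusion terms give $2\hat C_{coe}k_m\|B(\vec u_{f}^{h,m+1})\|_{X_f}^{2}$ and $2g\check C_{coe}k_m\|B(\phi_{p}^{h,m+1})\|_{X_p}^{2}$ by the coercivity bounds in (\ref{bilinearform}), and I would reserve part of this coercive budget to absorb later contributions. The body-force terms are controlled by Cauchy--Schwarz and Young's inequality against a fraction of the coercive norms, yielding $\|\vec g_{f}\|_{L^2(0,T;L^2(\Omega_f))}^{2}$ and $\|g_{p}\|_{L^2(0,T;L^2(\Omega_p))}^{2}$ on the right.

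The genuinely new step, and the main obstacle, is the interface contribution $-2k_mc_{\Gamma}\big(B(\vec u_{f}^{h,m+1}),(1+(1-\theta)\tau_{m-1})\phi_{p}^{h,m}-(1-\theta)\tau_{m-1}\phi_{p}^{h,m-1}\big)+2k_mc_{\Gamma}\big((1+(1-\theta)\tau_{m-1})\vec u_{f}^{h,m}-(1-\theta)\tau_{m-1}\vec u_{f}^{h,m-1},B(\phi_{p}^{h,m+1})\big)$. In Algorithm~1 these two terms coincide and cancel by the anti-symmetry of $a_{\Gamma}(\cdot,\cdot)$; here the mismatch between the $B$-combination and the explicit second-order extrapolation of past values prevents that, so I would split each term by the bilinearity of $c_{\Gamma}$ into the part that does cancel, $c_{\Gamma}(B(\vec u_{f}^{h,m+1}),B(\phi_{p}^{h,m+1}))$, plus a remainder involving only the extrapolation defect. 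These remainders I would bound with Lemma~\ref{gamma}, choosing $\varepsilon$ of order $k_m$ so that the $X_f$- and $X_p$-norm pieces are swallowed by the reserved half of the coercive terms, while the leftover norms of the filtered iterates at the earlier levels are moved to the right-hand side with coefficients proportional to $k$. Summing over $m=1,\dots,N-1$ and invoking the discrete Gronwall inequality (Lemma~\ref{gronwall}) then produces the stated estimate, the accumulation of these interface remainders being exactly what generates the exponential factor $C(T)$; note also that keeping the Gronwall constant of size $k$ rather than $k/h$ forces one to use the trace form (\ref{gammal}) of the interface bound instead of its inverse-inequality version (\ref{gamma2}). I expect the delicate point to be the bookkeeping of which norm of which iterate lands on which side, so that everything retained on the right is either data, an initial value, or a quantity already controlled by the accumulated left-hand side.
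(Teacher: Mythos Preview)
Your overall plan --- substituting (\ref{dtf}) to pass to the $A,B$ form, testing with $2k_mB$, invoking Lemma~\ref{lemma} for the time-derivative, coercivity for the diffusion, and closing with Gronwall --- matches the paper's proof. The divergence is in the interface step, and there your route does not close. First, the split you describe is counterproductive: subtracting off $c_\Gamma\big(B(\vec u_f^{h,m+1}),B(\phi_p^{h,m+1})\big)$ leaves a remainder that now \emph{contains} the current level $\phi_p^{h,m+1}$ (through $B$), whereas the raw extrapolation $(1+(1-\theta)\tau_{m-1})\phi_p^{h,m}-(1-\theta)\tau_{m-1}\phi_p^{h,m-1}$ involved only levels $m$ and $m-1$; Lemma~\ref{gronwall}, whose right-hand sum must stop strictly below $N$, then no longer applies. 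Second, bounding with (\ref{gammal}) instead of (\ref{gamma2}) lands the remainder in the $X_p$-norm, but the accumulated left-hand side controls only $\|\phi_p^{h,m}\|_p^2$ in $L^2$ and $\sum_j k_j\|B(\phi_p^{h,j+1})\|_{X_p}^2$ for the specific $B$-combination --- not individual $\|\phi_p^{h,j}\|_{X_p}^2$ --- so the Gronwall loop does not close. (Taking $\varepsilon$ of order $k_m$ does not help either: absorbing $\tfrac{k_m}{2\varepsilon}\|B(\vec u_f)\|_{X_f}^2$ into $\hat C_{coe}k_m\|B(\vec u_f)\|_{X_f}^2$ forces $\varepsilon=O(1)$.)

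The paper handles the interface without any cancellation: it applies (\ref{gamma2}) directly to each of the two interface terms, which places the extrapolated \emph{past} iterates into the $L^2$-norms $\|\phi_p^{h,m}\|_p^2$ and $\|\vec u_f^{h,m}\|_f^2$ --- exactly the quantities that $C_{min}I(\theta,\tau)\|\cdot\|^2$ on the left already controls --- and Lemma~\ref{gronwall} then closes. The $h^{-1}$ you worried about does appear in the paper's intermediate interface bound and is afterwards replaced by a constant via ``properly chosen'' $C_1,C_2$; that step is its own issue, but the structural choice of landing in $L^2$-norms of strictly earlier iterates is what makes the Gronwall argument run, and is what your proposal is missing.
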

\begin{proof}
From (\ref{dtf}), we have
\begin{align}
&\hat{\vec{u}}_{f}^{h,m+1}=\frac{2(1-\theta)\tau_{m-1}+1}{\tau_{m-1}+1}{\vec{u}}_{f}^{h,m+1}
-(1-2\theta)\tau_{m-1}{\vec{u}}_{f}^{h,m}
+\frac{(1-2\theta)\tau_{m-1}^2}{\tau_{m-1}+1}{\vec{u}}_{f}^{h,m-1},\nonumber\\
&\hat{p}_{f}^{h,m+1}=\frac{2(1-\theta)\tau_{m-1}+1}{\tau_{m-1}+1}p_{f}^{h,m+1}
-(1-2\theta)\tau_{m-1}p_{f}^{h,m}
+\frac{(1-2\theta)\tau_{m-1}^2}{\tau_{m-1}+1}p_{f}^{h,m-1},\nonumber\\
&\hat{\phi}_{p}^{h,m+1}=\frac{2(1-\theta)\tau_{m-1}+1}{\tau_{m-1}+1}\phi_{p}^{h,m+1}
-(1-2\theta)\tau_{m-1}\phi_{p}^{h,m}
+\frac{(1-2\theta)\tau_{m-1}^2}{\tau_{m-1}+1}\phi_{p}^{h,m-1},\nonumber
\end{align}
then take them into (\ref{ds}) and (\ref{dd}), add them together,
for $\forall\ \vec{v}_{f}^{h}\in X_{fh}, \psi_{p}^{h}\in X_{ph}$ and $\ q_{f}^{h}\in Q_{fh}$,
\begin{align}\label{decoupledsum}
&\frac{1}{k_m}\big(A(\vec{u}_{f}^{h,m+1}),\vec{v}_{f}^{h}\big)_{\Omega_f}
+\frac{g}{k_m}\big(A(\phi_{p}^{h,m+1}),\psi_{p}^{h}\big)_{\Omega_p}\nonumber\\
&+a_{\Omega_f}\big(B(\vec{u}_{f}^{h,m+1}),\vec{v}_{f}^{h}\big)
+a_{\Omega_p}\big(B(\phi_{p}^{h,m+1}),\psi_{p}^{h}\big)
+b\big(\vec{v}_{f}^{h},B(p_{f}^{h,m+1})\big)\nonumber\\
=&\big<(1-\theta)\vec{g}_{f}^{m+1}+\theta\vec{g}_{f}^{m},\vec{v}_{f}^{h}\big>_{\Omega_f}
+g\big<(1-\theta)g_{p}^{m+1}+\theta g_{p}^{m},\psi_{p}^{h}\big>_{\Omega_p}\\
&-c_{\Gamma}\big(\vec{v}_{f}^{h},(1+(1-\theta)\tau_{m-1})\phi_{p}^{h,m}
-(1-\theta)\tau_{m-1}\phi_{p}^{h,m-1}\big)\nonumber\\
&+c_{\Gamma}\big((1+(1-\theta)\tau_{m-1})\vec{u}_{f}^{h,m}-(1-\theta)\tau_{m-1}\vec{u}_{f}^{h,m-1},
\psi_{p}^{h}\big),\nonumber\\
&b\big(B(\vec{u}_{f}^{h,m+1}),q_{f}^{h}\big)=0.\nonumber
\end{align}

Setting $\vec{v}_{f}^{h}=2k_m B(\vec{u}_{f}^{h,m+1})$, $\psi_{p}^{h}=2k_m B(\phi_{p}^{h,m+1})$ and $q_{f}^{h}=2k_mB(p_{f}^{h,m+1})$.
Looking back at the proof process of Theorem \ref{theorem1}, we have the following equations to hold
\begin{align}\label{dl1}
&2\big(A(\vec{u}_{f}^{h,m+1}),B(\vec{u}_{f}^{h,m+1})\big)_{\Omega_f}\nonumber\\
\geq&
C_{min}\bigg(D(\theta,\tau)\|\vec{u}_{f}^{h,m+1}\|^2_f
-H(\theta,\tau)\|\vec{u}_{f}^{h,m}\|^2_f
-E(\theta,\tau)\|\vec{u}_{f}^{h,m-1}\|^2_f\\
&-F(\theta,\tau)(\|\vec{u}_{f}^{h,m}\|_f\|\vec{u}_{f}^{h,m-1}\|_f
-\|\vec{u}_{f}^{h,m}\|_f\|\vec{u}_{f}^{h,m-1}\|_f)\bigg),\nonumber
\end{align}
and
\begin{align}\label{dl2}
&2g\big(A(\phi_{p}^{h,m+1}),B(\phi_{p}^{h,m+1})\big)_{\Omega_p}\nonumber\\
\geq&gC_{min}\bigg(D(\theta,\tau)\|\phi_{p}^{h,m+1}\|^2_p
-H(\theta,\tau)\|\phi_{p}^{h,m}\|^2_p
-E(\theta,\tau)\|\phi_{p}^{h,m-1}\|^2_p\\
&-F(\theta,\tau)(\|\phi_{p}^{h,m}\|_p\|\phi_{p}^{h,m-1}\|_p
-\|\phi_{p}^{h,m}\|_p\|\phi_{p}^{h,m-1}\|_p)\bigg),\nonumber
\end{align}
and
\begin{align}\label{dl3}
2k_m a_{\Omega_f}\big(B(\vec{u}_{f}^{h,m+1}),B(\vec{u}_{f}^{h,m+1})\big)
\geq2\hat{C}_{coe}k_m\|B(\vec{u}_{f}^{h,m+1})\|_{X_f}^2,
\end{align}
and
\begin{align}\label{dl4}
2k_ma_{\Omega_p}\big(B(\phi_{p}^{h,m+1}),B(\phi_{p}^{h,m+1})\big)
\geq2g\check{C}_{coe}k_m\|B(\phi_{p}^{h,m+1})\|_{X_p}^2,
\end{align}
and
\begin{align}\label{dr1}
&2k_m\big<(1-\theta)\vec{g}_{f}^{m+1}+\theta\vec{g}_{f}^{m},B(\vec{u}_{f}^{h,m+1})\big>\\
\leq&
\frac{2\theta^2k_m}{\hat{C}_{coe}}\|\vec{g}_{f}^{m}\|_{X_f'}^2
+\frac{2(1-\theta)^2k_m}{\hat{C}_{coe}} \|\vec{g}_{f}^{m+1}\|_{X_f'}^2
+\frac{\hat{C}_{coe}k_m}{2}\|B(\vec{u}_{f}^{h,m+1})\|_{X_f}^2,\nonumber
\end{align}
and
\begin{align}\label{dr2}
&2gk_m\big<(1-\theta)g_{p}^{m+1}+\theta g_{p}^{m},B(\phi_{p}^{h,m+1})\big>\\
&\leq
\frac{2g\theta^2k_m}{\check{C}_{coe}}\|g_{p}^{m}\|_{X_p'}^2
+\frac{2g(1-\theta)^2k_m}{\check{C}_{coe}} \|g_{p}^{m+1}\|_{X_p'}^2
+\frac{g\check{C}_{coe}k_m}{2}B(\phi_{p}^{h,m+1})\|_{X_p}^2.\nonumber
\end{align}

Here, we need to analyze the interface item on the right-hand side. Using the Lemma \ref{gamma} presented earlier, and bringing in the appropriate parameters $\varepsilon_1=\frac{1}{\hat{C}_{coe}}$ and $\varepsilon_2=\frac{g}{\check{C}_{coe}}$, we have
\begin{align}\label{dr3}
&-c_{\Gamma}\big(B(\vec{u}_{f}^{h,m+1}),
(1+(1-\theta)\tau_{m-1})\phi_{p}^{h,m}-(1-\theta)\tau_{m-1}\phi_{p}^{h,m-1}\big)\nonumber\\
&+c_{\Gamma}\big((1+(1-\theta)\tau_{m-1})\vec{u}_{f}^{h,m}-(1-\theta)\tau_{m-1}\vec{u}_{f}^{h,m-1},
B(\phi_{p}^{h,m+1})\big)\\
\leq &\frac{\hat{C}_{coe}k_m}{2}\|B(\vec{u}_{f}^{h,m+1})\|_{X_f}^2
+\frac{2C_1h^{-1}k_m}{\hat{C}_{coe}}
\|(1+(1-\theta)\tau_{m-1})\phi_{p}^{h,m}-(1-\theta)\tau_{m-1}\phi_{p}^{h,m-1}\|_p^2\nonumber\\
&+\frac{g\check{C}_{coe}k_m}{2}\|B(\phi_{p}^{h,m+1})\|_{X_p}^2
+\frac{2gC_2h^{-1}k_m}{\check{C}_{coe}}
\|(1+(1-\theta)\tau_{m-1})\vec{u}_{f}^{h,m}-(1-\theta)\tau_{m-1}\vec{u}_{f}^{h,m-1}\|_f^2\nonumber\\
&\leq
\frac{\hat{C}_{coe}k_m}{2}\|B(\vec{u}_{f}^{h,m+1})\|_{X_f}^2
+\frac{2gk_m}{\hat{C}_{coe}}
\|(1+(1-\theta)\tau_{m-1})\phi_{p}^{h,m}-(1-\theta)\tau_{m-1}\phi_{p}^{h,m-1}\|_p^2\nonumber\\
&+\frac{g\check{C}_{coe}k_m}{2}\|B(\phi_{p}^{h,m+1})\|_{X_p}^2
+\frac{2k_m}{\check{C}_{coe}}
\|(1+(1-\theta)\tau_{m-1})\vec{u}_{f}^{h,m}-(1-\theta)\tau_{m-1}\vec{u}_{f}^{h,m-1}\|_f^2,\nonumber
\end{align}
where the last inequality follows from properly chosen constant $C_1$ and $C_2$.

Combining the (\ref{dl1})-(\ref{dr3}), and sum (\ref{decoupledsum}) over $m=1,2,...,N-1$, we  use the same method as the Theorem \ref{theorem1}, and let $k=\max \limits_{1\leq m\leq N-1}\{k_m\}$, we have
\begin{align*}
&C_{min}\bigg(D(\theta,\tau)\|\vec{u}_{f}^{h,N}\|^2_f
+E(\theta,\tau)\|\vec{u}_{f}^{h,N-1}\|^2_f
-F(\theta,\tau)\|\vec{u}_{f}^{h,N}\|_f\|\vec{u}_{f}^{h,N-1}\|_f\bigg)\\
&+gC_{min}\bigg(D(\theta,\tau)\|\phi_p^{h,N}\|^2_p
+E(\theta,\tau)\|\phi_p^{h,N-1}\|^2_p
-F(\theta,\tau)\|\phi_p^{h,N}\|_p\|\phi_p^{h,N-1}\|_p\bigg)\\
&+\hat{C}_{coe}\sum_{m=1}^{N-1}\big[k_m\|B(\vec{u}_{f}^{h,m+1})\|_{X_f}^2\big]
+g\check{C}_{coe}\sum_{m=1}^{N-1}\big[k_m\|B(\phi_{p}^{h,m+1})\|_{X_p}^2\big]\\
\leq&\frac{2k\theta^2}{\hat{C}_{coe}}\sum_{m=1}^{N-1}\|\vec{g}_{f}^{m}\|_{X_f'}^2
    +\frac{2k(1-\theta)^2}{\hat{C}_{coe}}\sum_{m=1}^{N-1}\|\vec{g}_{f}^{m+1}\|_{X_f'}^2
    +\frac{2gk\theta^2}{\check{C}_{coe}}\sum_{m=1}^{N-1}\|g_{p}^{m}\|_{X_p'}^2
    +\frac{2gk(1-\theta)^2}{\check{C}_{coe}}\sum_{m=1}^{N-1}\|g_{p}^{m+1}\|_{X_p'}^2\\
&+\frac{2gk}{\hat{C}_{coe}}\sum_{m=1}^{N-1}
\|(1+(1-\theta)\tau_{m-1})\phi_{p}^{h,m}-(1-\theta)\tau_{m-1}\phi_{p}^{h,m-1}\|_p^2\\
&+\frac{2k}{\check{C}_{coe}}\sum_{m=1}^{N-1}
\|(1+(1-\theta)\tau_{m-1})\vec{u}_{f}^{h,m}-(1-\theta)\tau_{m-1}\vec{u}_{f}^{h,m-1}\|_f^2\\
&+C_{min}\bigg(D(\theta,\tau)\|\vec{u}_{f}^{h,1}\|^2_f
+E(\theta,\tau)\|\vec{u}_{f}^{h,0}\|^2_f
-F(\theta,\tau)\|\vec{u}_{f}^{h,1}\|_f\|\vec{u}_{f}^{h,0}\|_f\bigg)\\
&+gC_{min}\bigg(D(\theta,\tau)\|\phi_{p}^{h,1}\|^2_p
+E(\theta,\tau)\|\phi_{p}^{h,0}\|^2_p
-F(\theta,\tau)\|\phi_{p}^{h,1}\|_p\|\phi_{p}^{h,0}\|_p\bigg).
\end{align*}
Then note that
\begin{align*}
-F(\theta,\tau)\|\vec{u}_{f}^{h,N}\|_f\|\vec{u}_{f}^{h,N-1}\|_f
\geq-G(\theta,\tau)\|\vec{u}_{f}^{h,N}\|_f^2
-E(\theta,\tau)\|\vec{u}_{f}^{h,N-1}\|^2_f
\end{align*}
and
\begin{align*}
-F(\theta,\tau)\|\phi_p^{h,N}\|_p\|\phi_p^{h,N-1}\|_p
\geq-G(\theta,\tau)\|\phi_p^{h,N}\|_p^2
-E(\theta,\tau)\|\phi_p^{h,N-1}\|^2_p,
\end{align*}
so we can get
\begin{align*}
&C_{min}I(\theta,\tau)\|\vec{u}_{f}^{h,N}\|^2_{f}+gC_{min}I(\theta,\tau)\|\phi_p^{h,N}\|^2_{p}
+\hat{C}_{coe}\sum_{m=1}^{N-1}\big[k_m\|B(\vec{u}_{f}^{h,m+1})\|_{X_f}^2\big]
+g\check{C}_{coe}\sum_{m=1}^{N-1}\big[k_m\|B(\phi_{p}^{h,m+1})\|_{X_p}^2\big]\\
\leq&\frac{2k\theta^2}{\hat{C}_{coe}}\sum_{m=1}^{N-1}\|\vec{g}_{f}^{m}\|_{X_f'}^2
    +\frac{2k(1-\theta)^2}{\hat{C}_{coe}}\sum_{m=1}^{N-1}\|\vec{g}_{f}^{m+1}\|_{X_f'}^2
    +\frac{2k\theta^2}{\check{C}_{coe}}\sum_{m=1}^{N-1}\|g_{p}^{m}\|_{X_p'}^2
    +\frac{2k(1-\theta)^2}{\check{C}_{coe}}\sum_{m=1}^{N-1}\|g_{p}^{m+1}\|_{X_p'}^2\\
&+\frac{2k}{\hat{C}_{coe}C_{min}I(\theta,\tau)}
\sum_{m=1}^{N-1}gC_{min}I(\theta,\tau)\|\phi_{p}^{h,m}\|_p^2
+\frac{2k}{\check{C}_{coe}C_{min}I(\theta,\tau)}
\sum_{m=1}^{N-1}C_{min}I(\theta,\tau)\|\vec{u}_{f}^{h,m}\|_f^2\\
&+C_{min}\bigg(D(\theta,\tau)\|\vec{u}_{f}^{h,1}\|^2_f
+E(\theta,\tau)\|\vec{u}_{f}^{h,0}\|^2_f
-F(\theta,\tau)\|\vec{u}_{f}^{h,1}\|_f\|\vec{u}_{f}^{h,0}\|_f\bigg)\\
&+gC_{min}\bigg(D(\theta,\tau)\|\phi_{p}^{h,1}\|^2_p
+E(\theta,\tau)\|\phi_{p}^{h,0}\|^2_p
-F(\theta,\tau)\|\phi_{p}^{h,1}\|_p\|\phi_{p}^{h,0}\|_p\bigg).
\end{align*}
Applying the discrete Gronwall inequality, we end the proof.
\end{proof}
\section{Error estimates \label{4}}
In this section, we analyze the errors of coupled and decoupled variable time-stepping algorithms. For the sake of the later analysis, we define error functions:
\begin{eqnarray*}
&&e_{\vec{\ul}}^{h,m}=\vec{\ul}^{h,m}-\vec{\ul}^{m}=\vec{\ul}^{h,m}-P_h^{\vec{\ul}}\vec{\ul}^{m}
+P_h^{\vec{\ul}}\vec{\ul}^{m}-\vec{\ul}^{m}=\eta_{\vec{\ul}}^{h,m}+\zeta_{\vec{\ul}}^{h,m},\\
&&e_{p_f}^{h,m}=p_{f}^{h,m}-p_{f}^{m}=p_{f}^{h,m}-P_h^{p_f}p_{f}^{m}+P_h^{p}p_{f}^{m}-p_{f}^{m}
=\eta_{p}^{h,m}+\zeta_{p}^{h,m},\\
&&e_{\vec{u}_{f}}^{h,m}=\vec{u}_{f}^{h,m}-\vec{u}_{f}^{m}=\vec{u}_{f}^{h,m}-P_h^{\vec{u}_{f}}\vec{u}_{f}^{m}
+P_h^{\vec{u}_{f}}\vec{u}_{f}^{m}-\vec{u}_{f}^{m}=\eta_{\vec{u}_{f}}^{h,m}+\zeta_{\vec{u}_{f}}^{h,m},\\
&&e_{\phi_{p}}^{h,m}=\phi_{p}^{h,m}-\phi_{p}^{m}=\phi_{p}^{h,m}-P_h^{\phi_p}\phi_{p}^{m}+P_h^{\phi_p}\phi_{p}^{m}-\phi_{p}^{m}
=\eta_{\phi_{p}}^{h,m}+\zeta_{\phi_{p}}^{h,m}.
\end{eqnarray*}
Obviously, we have
\begin{eqnarray}
\begin{split}\label{theta}
&\|\zeta_{\vec{\ul}}^{h,m}\|_0\leq Ch^2\|\vec{\ul}^{h}(t)\|_{H^2},\quad
\|\zeta_{\vec{\ul}}^{h,m}\|_X\leq Ch\|\vec{\ul}^{h}(t)\|_{H^2},\quad
\|\zeta_{p_f}^{h,m}\|_{L^2}\leq Ch\|p_f^{h}(t)\|_{H^1},\\
&\|\zeta_{\vec{u}_f}^{h,m}\|_{f}\leq Ch^2\|\vec{u}_f^{h}(t)\|_{H^2},\quad
\|\zeta_{\vec{u}_f}^{h,m}\|_{X_f}\leq Ch\|\vec{u}_f^{h}(t)\|_{H^2},\\
&\|\zeta_{\phi_p}^{h,m}\|_{p}\leq Ch^2\|\phi_p^{h}(t)\|_{H^2},\quad
\|\zeta_{\phi_p}^{h,m}\|_{X_p}\leq Ch\|\phi_p^{h}(t)\|_{H^2}.
\end{split}
\end{eqnarray}
Note that $\eta_{\vec{\ul}}^{h,0}=0$, $\eta_{p}^{h,0}=0$, $\eta_{\vec{u}_f}^{h,0}=0$ and $\eta_{\phi_p}^{h,0}=0$.

Assume the solution satisfies the following regularity conditions:
\begin{eqnarray}
\begin{split}
\label{regularity}
&\vec{u}_f\in L^{\infty}(0,T;X_f^2),
\vec{u}_{f,t}\in L^{2}(0,T;X_f^1)\cap L^{\infty}(0,T;L^2),
\vec{u}_{f,tt}\in L^{2}(0,T;L^2), \vec{u}_{f,ttt}\in L^{2}(0,T;X_f'),\\
&\phi_p\in L^{\infty}(0,T;X_p^2),
\phi_{p,t}\in L^{2}(0,T;X_p^1)\cap L^{\infty}(0,T;L^2),
\phi_{p,tt}\in L^{2}(0,T;L^2),
\phi_{p,ttt}\in L^{2}(0,T;X_p').\\
\end{split}
\end{eqnarray}
And the external force $\vec{g}_f$ and $g_p$ also need to be satisfied
\begin{eqnarray}
\begin{split}\label{regularity1}
&\vec{g}_{f,t}\in L^{2}(0,T;L^2),\ \vec{g}_{f,tt}\in L^{2}(0,T;X_f'),
g_{p,t}\in L^{2}(0,T;L^2),\ g_{p,tt}\in L^{2}(0,T;X_p').
\end{split}
\end{eqnarray}

First, we derive following error estimate of coupled variable time-stepping algorithm (\textbf{Algorthm 1}).
\begin{theorem}\label{theorem3}(Second-order convergence of \textbf{Algorthm 1}) Under the assumption of (\ref{regularity}) and (\ref{regularity1}), for $N\geq 2$ we have the estimate
\begin{align*}
I(\theta,\tau)\|e_{\vec{\ul}}^{h,N}\|_0^2
+C_{coe}\sum_{m=1}^{N-1}\big[k_m\|B(e_{\vec{\ul}}^{h,m+1})\|_{X}^2\big]
\leq C (\tilde{k}^4+h^4),
\end{align*}
where $0<\theta<\frac{1}{2}$, $\tau_{min}\leq\tau\leq\tau_{max}$, $\tilde{k}=\max \limits_{1\leq m\leq N-1}\{k_m+k_{m-1}\}$ and $C$ is a positive constant.
\end{theorem}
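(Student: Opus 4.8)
The plan is to rerun the energy argument of Theorem \ref{theorem1}, but now on the error equation, feeding the spatial projection error and the temporal consistency error through the right-hand side. First I would insert the exact solution $(\vec{\ul}^{m+1},p_f^{m+1})$ into the one-step form (\ref{coupledsum}): a Taylor expansion at a suitable node produces a residual functional $\t^{m+1}\in X'$ with
\begin{align*}
\frac{1}{k_m}\big(A(\vec{\ul}^{m+1}),\vec{\vl}^{h}\big)+a\big(B(\vec{\ul}^{m+1}),\vec{\vl}^{h}\big)+b\big(\vec{\vl}^{h},B(p_f^{m+1})\big)
=\big<(1-\theta)\vec{F}^{m+1}+\theta\vec{F}^{m},\vec{\vl}^{h}\big>+\big<\t^{m+1},\vec{\vl}^{h}\big>,
\end{align*}
together with $b(B(\vec{\ul}^{m+1}),q_f^h)=0$. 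Subtracting this from (\ref{coupledsum}), splitting $e_{\vec{\ul}}^{h,m+1}=\eta_{\vec{\ul}}^{h,m+1}+\zeta_{\vec{\ul}}^{h,m+1}$ and $e_{p_f}^{h,m+1}=\eta_{p}^{h,m+1}+\zeta_{p}^{h,m+1}$, and using the defining identity (\ref{projection}) of $P_h$ (valid for the fixed linear combinations $A,B$ by linearity) to cancel $a(B(\zeta_{\vec{\ul}}^{h,m+1}),\vec{\vl}^{h})+b(\vec{\vl}^{h},B(\zeta_{p}^{h,m+1}))$ and, likewise, $b(B(\zeta_{\vec{\ul}}^{h,m+1}),q_f^h)=0$, I arrive at the error equation for $\eta_{\vec{\ul}}^{h,m+1}$:
\begin{align*}
\frac{1}{k_m}\big(A(\eta_{\vec{\ul}}^{h,m+1}),\vec{\vl}^{h}\big)+a\big(B(\eta_{\vec{\ul}}^{h,m+1}),\vec{\vl}^{h}\big)+b\big(\vec{\vl}^{h},B(\eta_{p}^{h,m+1})\big)
=-\frac{1}{k_m}\big(A(\zeta_{\vec{\ul}}^{h,m+1}),\vec{\vl}^{h}\big)-\big<\t^{m+1},\vec{\vl}^{h}\big>,
\end{align*}
with $b(B(\eta_{\vec{\ul}}^{h,m+1}),q_f^h)=0$.

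Next I would take $\vec{\vl}^{h}=2k_mB(\eta_{\vec{\ul}}^{h,m+1})$ and $q_f^h=2k_mB(\eta_{p}^{h,m+1})$; the discrete divergence-free identity makes the pressure term drop, so the pressure error never enters this estimate. On the left, Lemma \ref{lemma} yields, exactly as in (\ref{cl1}), a lower bound for $2(A(\eta_{\vec{\ul}}^{h,m+1}),B(\eta_{\vec{\ul}}^{h,m+1}))$ in the telescoping quadratic form with coefficients $D,H,E,F$ (and the factor $C_{min}$), while coercivity in (\ref{bilinearform}) gives $2k_ma(B(\eta_{\vec{\ul}}^{h,m+1}),B(\eta_{\vec{\ul}}^{h,m+1}))\geq 2C_{coe}k_m\|B(\eta_{\vec{\ul}}^{h,m+1})\|_X^2$. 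On the right, I write $A(\zeta_{\vec{\ul}}^{h,m+1})$ as a combination of the increments $\zeta_{\vec{\ul}}^{h,m+1}-\zeta_{\vec{\ul}}^{h,m}$ and $\zeta_{\vec{\ul}}^{h,m}-\zeta_{\vec{\ul}}^{h,m-1}$ (the coefficients of $A$ sum to zero), estimate each increment by (\ref{app}) and the time regularity in (\ref{regularity}), and split $-2(A(\zeta_{\vec{\ul}}^{h,m+1}),B(\eta_{\vec{\ul}}^{h,m+1}))$ via Cauchy--Schwarz and Young's inequality into a part absorbed into $C_{coe}k_m\|B(\eta_{\vec{\ul}}^{h,m+1})\|_X^2$ (or into a small multiple of the $D$-term plus lower-index $\|\eta_{\vec{\ul}}^{h,m}\|_0^2$, $\|\eta_{\vec{\ul}}^{h,m-1}\|_0^2$ terms) and a part that sums to $O(h^4)$; the consistency term is dominated by $\frac{k_m}{C_{coe}}\|\t^{m+1}\|_{X'}^2+C_{coe}k_m\|B(\eta_{\vec{\ul}}^{h,m+1})\|_X^2$, again absorbing the second piece into the coercive term.

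The core of the argument, and the step I expect to be the main obstacle, is the variable-step consistency estimate: one must show $\|\t^{m+1}\|_{X'}\leq C(k_m+k_{m-1})^{3/2}\big(\|\vec{\ul}_{ttt}\|_{L^2(t_{m-1},t_{m+1};X')}+\|\vec{\ul}_{tt}\|_{L^2(t_{m-1},t_{m+1};L^2)}+\|\vec{F}_{tt}\|_{L^2(t_{m-1},t_{m+1};X')}+\cdots\big)$, so that $\sum_{m=1}^{N-1}k_m\|\t^{m+1}\|_{X'}^2\leq C\tilde k^4$ under (\ref{regularity})--(\ref{regularity1}). This is precisely where the definitions of $A(\vec{\ul}^{m+1})$, $B(\vec{\ul}^{m+1})$ and $S(\vec{F}^{m+1})$ are used: expanding $\vec{\ul}^{m+1},\vec{\ul}^{m},\vec{\ul}^{m-1}$ (and $p_f$, $\vec{F}$) about a common time node and exploiting $\tau_{min}\leq\tau_{m-1}\leq\tau_{max}$, the coefficients are engineered so that $\tfrac{1}{k_m}A$ reproduces $\partial_t\vec{\ul}$ to second order and $B$ reproduces the corresponding function value to second order, with every first-order remainder annihilated by the time-filter correction; carrying all these cancellations through the variable-step bookkeeping is the only genuinely lengthy computation here.

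Finally I would sum the resulting inequality over $m=1,\dots,N-1$, telescope the $D$-$H$-$E$ terms and the $F$-cross terms exactly as in Theorem \ref{theorem1}, use $-F(\theta,\tau)\|\eta_{\vec{\ul}}^{h,N}\|_0\|\eta_{\vec{\ul}}^{h,N-1}\|_0\geq -G(\theta,\tau)\|\eta_{\vec{\ul}}^{h,N}\|_0^2-E(\theta,\tau)\|\eta_{\vec{\ul}}^{h,N-1}\|_0^2$ with $D-G=I$, collect the leftover $\|\eta_{\vec{\ul}}^{h,m}\|_0^2$ contributions produced by the $L^2$-pairings, and absorb them by the discrete Gronwall inequality, Lemma \ref{gronwall}, folding $\exp(CT)$ into the constant $C$. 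Using $\eta_{\vec{\ul}}^{h,0}=0$ together with a start-up bound $\|e_{\vec{\ul}}^{h,1}\|_0\leq C(\tilde k^2+h^2)$, and then passing from $\eta_{\vec{\ul}}^{h,N}$ to $e_{\vec{\ul}}^{h,N}$ via $\|e_{\vec{\ul}}^{h,N}\|_0\leq\|\eta_{\vec{\ul}}^{h,N}\|_0+\|\zeta_{\vec{\ul}}^{h,N}\|_0$ and (\ref{theta}), gives $I(\theta,\tau)\|e_{\vec{\ul}}^{h,N}\|_0^2+C_{coe}\sum_{m=1}^{N-1}k_m\|B(e_{\vec{\ul}}^{h,m+1})\|_X^2\leq C(\tilde k^4+h^4)$, which is the claimed estimate.
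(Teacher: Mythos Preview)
Your proposal follows essentially the same route as the paper: derive the error equation for $\eta_{\vec{\ul}}^{h,m+1}$ by using the projection identity (\ref{projection}) to eliminate the $\zeta$-parts of the elliptic and pressure terms, test with $\vec{\vl}^{h}=2k_mB(\eta_{\vec{\ul}}^{h,m+1})$, apply Lemma~\ref{lemma} and coercivity on the left, bound the right-hand side by Taylor expansions and Young's inequality, then telescope and use $D-G=I$ exactly as in Theorem~\ref{theorem1}. The paper organizes the right-hand side as three explicit pieces, $-\big(\tfrac{1}{k_m}A(\vec{\ul}^{m+1})-B(\vec{\ul}_t^{m+1}),\vec{\vl}^{h}\big)$, $-\big(\tfrac{1}{k_m}A(\zeta_{\vec{\ul}}^{h,m+1}),\vec{\vl}^{h}\big)$, and $\langle S(\vec{F}^{m+1}),\vec{\vl}^{h}\rangle$, rather than a single residual $\t^{m+1}$, but this is only bookkeeping.

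The one substantive difference is that the paper does \emph{not} invoke Gronwall here. Each of the three right-hand terms is estimated in the dual norm $\|\cdot\|_{X'}$ and split by Young as $\tfrac{C_{coe}k_m}{3}\|B(\eta_{\vec{\ul}}^{h,m+1})\|_X^2+\tfrac{3k_m}{C_{coe}}\|\cdot\|_{X'}^2$; the three $\tfrac{C_{coe}k_m}{3}$ pieces are absorbed entirely into the coercive term $2C_{coe}k_m\|B(\eta_{\vec{\ul}}^{h,m+1})\|_X^2$, leaving no $\|\eta_{\vec{\ul}}^{h,m}\|_0^2$ terms on the right at all. Your alternative of pairing in $L^2$ and generating lower-index $\|\eta_{\vec{\ul}}^{h,m}\|_0^2$ terms to be handled by Lemma~\ref{gronwall} would also work, but it is an unnecessary detour for the coupled scheme (Gronwall only enters the paper in the decoupled Theorems~\ref{theorem2} and~\ref{theorem4}, where the interface terms force it). Correspondingly, the paper needs no separate start-up bound: after telescoping, the initial contribution involves only $\eta_{\vec{\ul}}^{h,0}$ and $\eta_{\vec{\ul}}^{h,1}$, which vanish since the given starting values are taken to be the projections.
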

\begin{proof}
First, let us multiply (\ref{coupled}) by $\frac{2(1-\theta)^2\tau_{m-1}+1-\theta}{\tau_{m-1}+1}$,
 $-((1-\theta)(1-2\theta)\tau_{m-1}-\theta)$ and $\frac{(1-\theta)(1-2\theta)\tau_{m-1}}{\tau_{m-1}+1}$ at $t_{m+1}$, $t_{m}$ and $t_{m-1}$, respectively. Then, subtracting the summation of three equations from (\ref{coupledsum}),
we use the error functions and the  properties of the project operator (\ref{projection}),
for $\forall\ \vec{\vl}^{h}\in X_h$ and $\ q_{f}^{h}\in Q_{f}^{h}$,
\begin{align}\label{ce}
&\frac{1}{k_m}\big(A(\eta_{\vec{\ul}}^{h,m+1}),\vec{\vl}^{h}\big)
+a\big(B(\eta_{\vec{\ul}}^{h,m+1}),\vec{\vl}^{h}\big)
+b\big(\vec{\vl}^{h}, B(\eta_{p}^{h,m+1})\big)\nonumber\\
=&-\big(\frac{1}{k_m}A(\vec{\ul}^{m+1})-B(\vec{\ul}_{t}^{m+1}),\vec{\vl}^{h}\big)
-\big(\frac{1}{k_m}A(\zeta_{\vec{\ul}}^{h,m+1}),\vec{\vl}^{h}\big)
+\big<S(\vec{F}^{m+1}),\vec{\vl}^{h}\big>,\\
&b\big(B(\eta_{\vec{\ul}}^{h,m+1}),q_{f}^{h}\big)=0.\nonumber
\end{align}

Setting $\vec{\vl}^{h}=2k_m B(\eta_{\vec{\ul}}^{h,m+1})$, $q_{f}^{h}=2k_m B(\eta_{p}^{h,m+1})$ and analysing each term of the first equation in (\ref{ce}).
Similar to Theorem \ref{theorem1}, we use Lemma \ref{lemma} to handle the first term on the left-hand side, we have
\begin{align}\label{cel1}
&2\big(A(\eta_{\vec{\ul}}^{h,m+1}), B(\eta_{\vec{\ul}}^{h,m+1})\big)\nonumber\\
\geq&C_{min}\bigg(D(\theta,\tau)\|\eta_{\vec{\ul}}^{h,m+1}\|^2_0
-H(\theta,\tau)\|\eta_{\vec{\ul}}^{h,m}\|^2_0
-E(\theta,\tau)\|\eta_{\vec{\ul}}^{h,m-1}\|^2_0\\
&-F(\theta,\tau)
(\|\eta_{\vec{\ul}}^{h,m+1}\|_0\|\eta_{\vec{\ul}}^{h,m}\|_0
-\|\eta_{\vec{\ul}}^{h,m}\|_0\|\eta_{\vec{\ul}}^{h,m-1}\|_0)\bigg).\nonumber
\end{align}
Then using the coercivity of the bilinear form $a(\cdot,\cdot)$, the second term on the left can be handled as
\begin{align}\label{cel2}
2k_m a\big(B(\eta_{\vec{\ul}}^{h,m+1}), B(\eta_{\vec{\ul}}^{h,m+1})\big)
\geq2C_{coe}k_m\|B(\eta_{\vec{\ul}}^{h,m+1})\|_{X}^2.
\end{align}

Next, we consider the right side of (\ref{ce}).
For the first term on the right-hand side, we use the Taylor expansion with the integral remainder,
\begin{align*}
&\vec{\ul}^{m}=\vec{\ul}^{m+1}-k_m\vec{\ul}^{m+1}_t+\frac{k_m^2}{2}\vec{\ul}^{m+1}_{tt}+\frac{1}{2}\int_{t^{m+1}}^{t^{m}}(t^{m}-t)^2\vec{\ul}_{ttt}dt,\\
&\vec{\ul}^{m-1}=\vec{\ul}^{m+1}-(k_m+k_{m-1})\vec{\ul}^{m+1}_t+\frac{(k_m+k_{m-1})^2}{2}\vec{\ul}^{m+1}_{tt}+\frac{1}{2}\int_{t^{m+1}}^{t^{m-1}}(t^{m-1}-t)^2\vec{\ul}_{ttt}dt,\\
&\vec{\ul}^{m}_t=\vec{\ul}^{m+1}_t-k_m\vec{\ul}^{m+1}_{tt}+\int_{t^{m+1}}^{t^{m}}(t^{m}-t)\vec{\ul}_{ttt}dt,\\
&\vec{\ul}^{m-1}_t=\vec{\ul}^{m+1}_t-(k_m+k_{m-1})\vec{\ul}^{m+1}_{tt}+\int_{t^{m+1}}^{t^{m-1}}(t^{m-1}-t)\vec{\ul}_{ttt}dt.
\end{align*}
So we get
\begin{align*}
&\frac{1}{k_m}A(\vec{\ul}^{m+1})-B(\vec{\ul}_{t}^{m+1})\\
=&\frac{(1-2\theta)\tau_{m-1}+1}{2k_{m}}\int_{t^{m}}^{t^{m+1}}(t^{m}-t)^2\vec{\ul}_{ttt}dt
-\frac{(1-2\theta)\tau_{m-1}^2}{2k_{m}(\tau_{m-1}+1)}\int_{t^{m-1}}^{t^{m+1}}(t^{m-1}-t)^2\vec{\ul}_{ttt}dt\\
&-((1-\theta)(1-2\theta)\tau_{m-1}-\theta)\int_{t^{m}}^{t^{m+1}}(t^{m}-t)\vec{\ul}_{ttt}dt
+\frac{(1-\theta)(1-2\theta)\tau_{m-1}^2}{\tau_{m-1}+1}\int_{t^{m-1}}^{t^{m+1}}(t^{m-1}-t)\vec{\ul}_{ttt}dt.
\end{align*}
By using Cauchy-Schwarz inequality,
\begin{align*}
&(\int_{t^{m}}^{t^{m+1}}(t^{m}-t)^2\vec{\ul}_{ttt}dt)^2\leq \frac{k_{m}^5}{5}\int_{t^{m}}^{t^{m+1}}\vec{\ul}_{ttt}^2dt,\\
&(\int_{t^{m-1}}^{t^{m+1}}(t^{m-1}-t)^2\vec{\ul}_{ttt}dt)^2 \leq \frac{(k_{m}+k_{m-1})^5}{5}\int_{t^{m-1}}^{t^{m+1}}\vec{\ul}_{ttt}^2dt,\\
&(\int_{t^{m}}^{t^{m+1}}(t^{m}-t)\vec{\ul}_{ttt}dt)^2\leq \frac{k_{m}^3}{3}\int_{t^{m}}^{t^{m+1}}\vec{\ul}_{ttt}^2dt,\\
&(\int_{t^{m-1}}^{t^{m+1}}(t^{m-1}-t)\vec{\ul}_{ttt}dt)^2 \leq \frac{(k_{m}+k_{m-1})^3}{3}\int_{t^{m-1}}^{t^{m+1}}\vec{\ul}_{ttt}^2dt,
\end{align*}
the first term on the right can be handled as
\begin{align}\label{cer1}
&2k_m\big(\frac{1}{k_m}A(\vec{\ul}^{m+1})-B(\vec{\ul}_{t}^{m+1}),B(\eta_{\vec{\ul}}^{h,m+1})\big)\nonumber\\
\leq& \frac{3k_m}{C_{coe}}\|\frac{1}{k_m}A(\vec{\ul}^{m+1})-B(\vec{\ul}_{t}^{m+1})\|_{X'}^{2}
+\frac{C_{coe}k_m}{3}\|B(\eta_{\vec{\ul}}^{h,m+1})\|_X^2\\
\leq&\frac{C_{coe}k_m}{3}\|B(\eta_{\vec{\ul}}^{h,m+1})\|_{X}^2
+\frac{(k_m+k_{m-1})^4}{C_{coe}}\bigg[\frac{(1-\theta)^2\tau_{m-1}^4+((1-2\theta)\tau_{m-1}+1)^2}{4}\nonumber\\
&+((1-\theta)(1-2\theta)\tau_{m-1}-\theta)^2+((1-\theta)(1-2\theta)\tau_{m-1}^2)^2\bigg]
      \int_{t^{m-1}}^{t^{m+1}}\|\vec{\ul}_{ttt}\|^2_{X'}dt.\nonumber
\end{align}

In the same way, for the second term on the right side, we use the Taylor expansion with the integral remainder,
\begin{align*}
	&\vec{\ul}^{m}=\vec{\ul}^{m+1}+\int_{t^{m+1}}^{t^{m}}\vec{\ul}_{t}dt,\\
	&\vec{\ul}^{m-1}=\vec{\ul}^{m+1}+\int_{t^{m+1}}^{t^{m-1}}\vec{\ul}_{t}dt.
\end{align*}
Then
\begin{align*}
\frac{1}{k_m}A(\zeta_{\vec{\ul}}^{h,m+1})
&=\frac{1}{k_m}\big[(P_h^{\vec{\ul}}-I)A(\vec{\ul}^{m+1})\big]\\
&=\frac{1}{k_m}\left[((1-2\theta)\tau_{m-1}+1)\int_{t^{m}}^{t^{m+1}}(P_h^{\vec{\ul}}-I)\vec{\ul}_{t}dt
-\frac{(1-2\theta)\tau_{m-1}^2}{\tau_{m-1}+1}\int_{t^{m-1}}^{t^{m+1}}(P_h^{\vec{\ul}}-I)\vec{\ul}_{t}dt\right].
\end{align*}
Thus, we have
\begin{align}\label{cer2}
&2k_m\big(\frac{1}{k_m}A(\zeta_{\vec{\ul}}^{h,m+1}),B(\eta_{\vec{\ul}}^{h,m+1})\big)\nonumber\\
\leq& \frac{C_{coe}k_m}{3}\|B(\eta_{\vec{\ul}}^{h,m+1})\|_X^2
+\frac{3k_m}{C_{coe}}\|\frac{1}{k_m}A(\zeta_{\vec{\ul}}^{h,m+1})\|^2_{X'}\\
\leq&\frac{C_{coe}k_m}{3}\|B(\eta_{\vec{\ul}}^{h,m+1})\|_X^2
+\frac{3((1-2\theta)\tau_{m-1}+1)^2+3(1-2\theta)^2\tau_{m-1}^4}{C_{coe}}
\int_{t^{m-1}}^{t^{m+1}}\|(P_h^{\vec{\ul}}-I)\vec{\ul}_t\|^2_{X'}dt.\nonumber
\end{align}

Simlarly, for the third term on the right,
\begin{align*}
   &\vec{F}^{m+1}=\vec{F}^{m}+k_m\vec{F}_{t}^{m}+\int_{t^{m}}^{t^{m+1}}(t^{m+1}-t)\vec{F}_{tt}dt,\\
   &\vec{F}^{m-1}=\vec{F}^{m}-k_{m-1}\vec{F}_{t}^{m}+\int_{t^{m}}^{t^{m-1}}(t^{m-1}-t)\vec{F}_{tt}dt,
\end{align*}
then
\begin{align*}
&S(\vec{F}^{m+1})=-\frac{(1-\theta)(1-2\theta)\tau_{m-1}}{\tau_{m-1}+1}\int_{t^m}^{t^{m+1}}(t^{m+1}-t)\vec{F}_{tt}dt
+\frac{(1-\theta)(1-2\theta)\tau_{m-1}^2}{\tau_{m-1}+1}\int_{t^{m-1}}^{t^{m}}(t^{m-1}-t)\vec{F}_{tt}dt.
\end{align*}
Thus, we have
\begin{align}\label{cer3}
&2k_m\big<S(\vec{F}^{m+1}),B(\eta_{\vec{\ul}}^{h,m+1})\big>\nonumber\\
\leq& \frac{C_{coe}k_m}{3}\|B(\eta_{\vec{\ul}}^{h,m+1})\|_X^2
+\frac{3k_m}{C_{coe}}\|S(\vec{F}^{m+1})\|_{X'}^2\\
\leq&\frac{C_{coe}k_m}{3}\|B(\eta_{\vec{\ul}}^{h,m+1})\|_X^2
+\frac{(k_m+k_{m-1})^4(1-\theta)^2(1-2\theta)^2(\tau_{m-1}^2+1)\tau_{m-1}^2}{C_{coe}(\tau_{m-1}+1)^2}\int_{t^{m-1}}^{t^{m+1}} \|\vec{F}_{tt}\|_{X'}^2dt.\nonumber
\end{align}

Combining the (\ref{cel1})-(\ref{cer3})  and sum the (\ref{ce}) over $m=1,2,...,N-1$, and we use the same method as Theorem \ref{theorem1}. Let $\tilde{k}=\max \limits_{1\leq m\leq N-1}\{k_m+k_{m-1}\}$, we have
\begin{align*}
&C_{min}I(\theta,\tau)\|\eta_{\vec{\ul}}^{h,N}\|_0^2
+C_{coe}\sum_{m=1}^{N-1}[k_m\|B(\eta_{\vec{\ul}}^{h,m+1})\|_{X}^2]\nonumber\\
\leq&
\sum_{m=1}^{N-1}\left[\frac{\tilde{k}^4}{C_{coe}}\bigg(\frac{(1-\theta)^2\tau_{m-1}^4+((1-2\theta)\tau_{m-1}+1)^2}{4}\right.\nonumber\\
      &\left.+((1-\theta)(1-2\theta)\tau_{m-1}-\theta)^2+((1-\theta)(1-2\theta)\tau_{m-1}^2)^2\bigg)
      \int_{t^{m-1}}^{t^{m+1}}\|\vec{\ul}_{ttt}\|^2_{X'}dt\right.\\
&\left.+\frac{\tilde{k}^4(1-\theta)^2(1-2\theta)^2(\tau_{m-1}^2+1)\tau_{m-1}^2}{C_{coe}(\tau_{m-1}+1)^2}
\int_{t^{m-1}}^{t^{m+1}} \|\vec{F}_{tt}\|_{X'}^2dt\right.\nonumber\\
&\left.+\frac{3((1-2\theta)\tau_{m-1}+1)^2+3(1-2\theta)^2\tau_{m-1}^4}{C_{coe}}
\int_{t^{m-1}}^{t^{m+1}}\|(P_h^{\vec{\ul}}-I)\vec{\ul}_t\|^2_{X'}dt\right].\nonumber
\end{align*}

Finally, using the triangle inequality, we end the proof.
\end{proof}

Then, we derive the following error estimate of decoupled variable time-stepping algorithm(\textbf{Algorithm 2}).
\begin{theorem}\label{theorem4}(Second-order convergence of \textbf{Algorithm 2}) Under the assumption of (\ref{regularity}) and (\ref{regularity1}), for $N\geq 2$ we have the estimate
\begin{align*}
I(\theta,\tau)\|\eta_{\vec{u}_f}^{h,N}\|_f^2
+gI(\theta,\tau)\|\eta_{\phi_p}^{h,N}\|_p^2&\\
+\hat{C}_{coe}\sum_{m=1}^{N-1}\big[k_m\|B(e_{\vec{u}_f}^{h,m+1})\|_{X_f}^2\big]
+g\check{C}_{coe}\sum_{m=1}^{N-1}\big[k_m\|B(e_{\phi_p}^{h,m+1})\|_{X_p}^2\big]&
\leq C (\tilde{k}^4+h^4),
\end{align*}
where $0<\theta<\frac{1}{2}$, $\tau_{min}\leq\tau\leq\tau_{max}$, $\tilde{k}=\max \limits_{1\leq m\leq N-1}\{k_m+k_{m-1}\}$ and $C$ is a positive constant.
\end{theorem}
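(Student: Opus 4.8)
The plan is to run the error-equation argument of the proof of Theorem~\ref{theorem3} in parallel on the fluid and porous subproblems, and to graft onto it the interface/Gronwall estimates already developed for Theorem~\ref{theorem2}. First I would substitute the filter identities (\ref{dtf}) into (\ref{ds})--(\ref{dd}) to obtain the three-level form (\ref{decoupledsum}). Then I would test the weak formulation (\ref{coupled}) --- written with its diffusion part split as $a_{\Omega_f}+a_{\Omega_p}$ and its interface part as $c_\Gamma(\vec{v}_f,\phi_p)-c_\Gamma(\vec{u}_f,\psi_p)$ --- at $t_{m+1},t_m,t_{m-1}$ against the coefficients that reproduce the operators $B(\cdot)$ and $\frac{1}{k_m}A(\cdot)$, exactly as in the proof of Theorem~\ref{theorem3}, subtract the result from (\ref{decoupledsum}), and insert the error splitting $e=\eta+\zeta$ together with the projection identity (\ref{projection}). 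The outcome is an error system for $(\eta_{\vec{u}_f}^{h,m+1},\eta_{\phi_p}^{h,m+1},\eta_p^{h,m+1})$ whose interior terms coincide with those of (\ref{ce}) (split by subdomain), plus two genuinely new interface contributions: on the Stokes line the term $c_\Gamma\big(\vec{v}_f^h,\,(1+(1-\theta)\tau_{m-1})\phi_p^{h,m}-(1-\theta)\tau_{m-1}\phi_p^{h,m-1}-B(\phi_p^{m+1})\big)$, and its mirror $-c_\Gamma\big((1+(1-\theta)\tau_{m-1})\vec{u}_f^{h,m}-(1-\theta)\tau_{m-1}\vec{u}_f^{h,m-1}-B(\vec{u}_f^{m+1}),\,\psi_p^h\big)$ on the Darcy line. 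Each is split into an error part, $(1+(1-\theta)\tau_{m-1})\eta_{\phi_p}^{h,m}-(1-\theta)\tau_{m-1}\eta_{\phi_p}^{h,m-1}$ (plus the $\zeta$-analogue), and a consistency remainder $R_\Gamma^m:=(1+(1-\theta)\tau_{m-1})\phi_p^{m}-(1-\theta)\tau_{m-1}\phi_p^{m-1}-B(\phi_p^{m+1})$; the decisive point is that $R_\Gamma^m$ annihilates both constants and linears in time (both the explicit extrapolation and the $B$-combination reproduce the value of the exact solution at the $\theta$-point $t_m+(1-\theta)k_m$), hence is controlled by a time integral of $\phi_{p,tt}$ of size $O((k_m+k_{m-1})^2)$, and likewise on the Darcy side with $\vec{u}_{f,tt}$.

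Next I would take the test functions $\vec{v}_f^h=2k_mB(\eta_{\vec{u}_f}^{h,m+1})$, $\psi_p^h=2k_mB(\eta_{\phi_p}^{h,m+1})$, $q_f^h=2k_mB(\eta_p^{h,m+1})$ and bound the individual terms. Lemma~\ref{lemma} yields the $D,H,E,F$-weighted lower bounds for the $(A,B)$-inner products exactly as in (\ref{dl1})--(\ref{dl2}) but with $\eta$ in place of the solution; coercivity of $a_{\Omega_f}$ and $a_{\Omega_p}$ gives $2\hat{C}_{coe}k_m\|B(\eta_{\vec{u}_f}^{h,m+1})\|_{X_f}^2$ and $2g\check{C}_{coe}k_m\|B(\eta_{\phi_p}^{h,m+1})\|_{X_p}^2$; the three consistency/projection terms inherited from the interior (the analogues of $\frac{1}{k_m}A(\vec{\ul}^{m+1})-B(\vec{\ul}_t^{m+1})$, $\frac{1}{k_m}A(\zeta^{m+1})$ and $S(\vec{F}^{m+1})$, now componentwise in $\Omega_f$ and $\Omega_p$) are estimated exactly as in (\ref{cer1})--(\ref{cer3}) by $C\tilde{k}^4\int_{t^{m-1}}^{t^{m+1}}(\|\vec{u}_{f,ttt}\|_{X_f'}^2+\|\phi_{p,ttt}\|_{X_p'}^2+\|\vec{g}_{f,tt}\|_{X_f'}^2+\|g_{p,tt}\|_{X_p'}^2)\,dt$ plus $Ch^4\int_{t^{m-1}}^{t^{m+1}}(\|(P_h-I)\vec{u}_{f,t}\|_{X_f'}^2+\|(P_h-I)\phi_{p,t}\|_{X_p'}^2)\,dt$, using Taylor expansion with integral remainder, Cauchy--Schwarz and Young. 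For the new interface terms I would proceed as in (\ref{dr3}): apply Lemma~\ref{gamma} (the continuous bound (\ref{gammal})) to the $R_\Gamma^m$- and $\zeta$-pieces, absorbing a small multiple of $\hat{C}_{coe}k_m\|B(\eta_{\vec{u}_f}^{h,m+1})\|_{X_f}^2$ and $g\check{C}_{coe}k_m\|B(\eta_{\phi_p}^{h,m+1})\|_{X_p}^2$ on the left and leaving an $O(\tilde{k}^4+h^4)$ residue on the right, and apply Lemma~\ref{gamma} (the discrete bound (\ref{gamma2})) with $\varepsilon_1=1/\hat{C}_{coe}$, $\varepsilon_2=g/\check{C}_{coe}$ and suitably chosen constants to the lagged error pieces, which become $Ck_m(\|\eta_{\phi_p}^{h,m}\|_p^2+\|\eta_{\phi_p}^{h,m-1}\|_p^2+\|\eta_{\vec{u}_f}^{h,m}\|_f^2+\|\eta_{\vec{u}_f}^{h,m-1}\|_f^2)$ on the right.

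I would then sum over $m=1,\dots,N-1$ and finish as in Theorems~\ref{theorem1}--\ref{theorem3}: the $D,H,E,F$ contributions telescope, the cross terms are absorbed via $-F(\theta,\tau)\|\cdot\|\|\cdot\|\ge-G(\theta,\tau)\|\cdot\|^2-E(\theta,\tau)\|\cdot\|^2$ on both the fluid and porous norms, and $I(\theta,\tau)=D(\theta,\tau)-G(\theta,\tau)$ collapses the left side to $C_{min}I(\theta,\tau)\|\eta_{\vec{u}_f}^{h,N}\|_f^2+gC_{min}I(\theta,\tau)\|\eta_{\phi_p}^{h,N}\|_p^2$ together with the two diffusion sums; the initial-data terms are disposed of as in Theorem~\ref{theorem3} using $\eta^{h,0}=0$ and the starting accuracy of $\eta^{h,1}$. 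The right-hand side then carries the $O(\tilde{k}^4+h^4)$ data terms plus $\sum_m\frac{2k}{\hat{C}_{coe}C_{min}I(\theta,\tau)}\big(C_{min}I(\theta,\tau)\|\eta_{\vec{u}_f}^{h,m}\|_f^2\big)$ and the analogous porous sum, so the discrete Gronwall inequality (Lemma~\ref{gronwall}) closes the estimate and produces the factor $C(T)$ of the type appearing in Theorem~\ref{theorem2}. A last application of the triangle inequality with (\ref{theta}) turns $\eta$ into $e$ in the diffusion sums and folds in the remaining $O(h^4)$ projection contributions, giving $C(\tilde{k}^4+h^4)$.

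The main obstacle is the treatment of the explicit interface coupling. One has to verify rigorously that the second-order extrapolation used to decouple agrees with the $B$-combination of the exact trace to second order --- the zeroth-order part cancels because $1+(1-\theta)\tau_{m-1}-(1-\theta)\tau_{m-1}=1$ and the $B$-coefficients also sum to $1$, and the first-order part cancels because both represent the exact solution at $t_m+(1-\theta)k_m$ --- and then to control $R_\Gamma^m$ and the lagged error traces by Lemma~\ref{gamma} with $\varepsilon$'s small enough that the coercive diffusion terms are not destroyed. Keeping the mesh factor $h^{-1}$ from the discrete trace estimate (\ref{gamma2}) harmless for the lagged error traces before Gronwall --- i.e. absorbed into a fixed multiple of $\hat{C}_{coe},\check{C}_{coe},g$ rather than degenerating as $h\to 0$ --- is the one delicate bookkeeping point, mirroring what is done in the proof of Theorem~\ref{theorem2}; the rest is a routine, if lengthy, transcription of the proofs of Theorems~\ref{theorem2} and~\ref{theorem3}.
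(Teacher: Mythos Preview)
Your overall plan is correct and would produce the stated bound, but the way you handle the explicit interface coupling differs from the paper's argument in one substantive respect.

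You decompose each interface residual directly into a lagged discrete--error piece $(1+(1-\theta)\tau_{m-1})\eta^{h,m}-(1-\theta)\tau_{m-1}\eta^{h,m-1}$ (plus the $\zeta$-analogue) and the exact consistency defect $R_\Gamma^m$, bound the lagged $\eta$-traces with Lemma~\ref{gamma}, and then close with the discrete Gronwall inequality exactly as in Theorem~\ref{theorem2}. The paper instead adds and subtracts $B(\eta_{\phi_p}^{h,m+1})$ on the Stokes line and $B(\eta_{\vec{u}_f}^{h,m+1})$ on the Darcy line: the resulting pair $-c_\Gamma\!\big(B(\eta_{\vec{u}_f}),B(\eta_{\phi_p})\big)+c_\Gamma\!\big(B(\eta_{\vec{u}_f}),B(\eta_{\phi_p})\big)$ cancels across the two subproblems, and what is left is written through the auxiliary operator $W(\cdot)$ (your $R_\Gamma^m$ is precisely the paper's $W(\phi_p^{m+1})$). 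The paper then asserts that the $W(\eta)$-contributions also drop out by the antisymmetry property~(\ref{agamma}), so that no discrete $\eta$-terms survive on the right-hand side and \emph{no Gronwall step is invoked}; only $B(\zeta)$, $W(\zeta)$ and $W(\text{exact})$ terms remain, all of which are data of size $O(\tilde{k}^4+h^4)$.

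Both routes reach the target estimate. Your Gronwall argument is the more conservative and transparent one and costs only an exponential factor $C(T)$ of the same shape as in Theorem~\ref{theorem2}; the paper's add--and--subtract trick, if the asserted $W(\eta)$ cancellation is granted, yields the sharper Gronwall-free bound. Your identification of the $h^{-1}$ bookkeeping issue from the discrete trace estimate~(\ref{gamma2}) is apt: the paper handles it in the same ``properly chosen constant'' fashion you describe, so on that point the two proofs are on equal footing.
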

\begin{proof}
First, let us multiply (\ref{coupled}) by $\frac{2(1-\theta)^2\tau_{m-1}+1-\theta}{\tau_{m-1}+1}$,
 $-((1-\theta)(1-2\theta)\tau_{m-1}-\theta)$ and $\frac{(1-\theta)(1-2\theta)\tau_{m-1}}{\tau_{m-1}+1}$ at $t_{m+1}$, $t_{m}$ and $t_{m-1}$, respectively. Then, subtracting the summation of three equations from (\ref{decoupledsum}),
we use the error functions and the  properties of the project operator (\ref{projection}),
for $\forall\ \vec{v}_{f}^{h}\in X_{fh}, \phi_{p}^{h}\in X_{ph}$ and $\ q_{f}^{h}\in Q_{fh}$,
\begin{align}\label{de}
&\frac{1}{k_m}\big(A(\eta_{\vec{u}_f}^{h,m+1}),\vec{v}_{f}^{h}\big)_{\Omega_f}
+g\frac{1}{k_m}\big(A(\eta_{\phi_p}^{h,m+1}),\psi_{p}^{h}\big)_{\Omega_p}\nonumber\\
&+a_{\Omega_f}\big(B(\eta_{\vec{u}_f}^{h,m+1}),\vec{v}_{f}^{h}\big)
+a_{\Omega_p}\big(B(\eta_{\phi_p}^{h,m+1}),\psi_{p}^{h}\big)
+b\big(\vec{v}_{f}^{h}, B(\eta_{p_f}^{h,m+1})\big)\nonumber\\
=&-\big(\frac{1}{k_m}A(\vec{u}_{f}^{m+1})-B(\vec{u}_{f,t}^{m+1}),
\vec{v}_{f}^{h}\big)_{\Omega_f}
-g\big(\frac{1}{k_m}A(\phi_{p}^{m+1})-b(\phi_{p,t}^{m+1}),
\psi_{p}^{h}\big)_{\Omega_p}\\
&+\big(\frac{1}{k_m}A(\zeta_{\vec{u}_{f}}^{h,m+1}),
\vec{v}_{f}^{h}\big)_{\Omega_f}
+g\big(\frac{1}{k_m}A(\zeta_{\phi_{p}}^{h,m+1}),
\psi_{p}^{h}\big)_{\Omega_p}\nonumber\\
&+\big<S(\vec{g}_{f}^{m+1}),\vec{v}_{f}^{h}\big>_{\Omega_f}
+g\big<S(g_{p}^{m+1}),\psi_{p}^{h}\big>_{\Omega_p}\nonumber\\
&-c_{\Gamma}\big(\vec{v}_{f}^{h},
(1+(1-\theta)\tau_{m-1})\phi_{p}^{h,m}-(1-\theta)\tau_{m-1}\phi_{p}^{h,m-1}
-B(\phi_{p}^{m+1})\big)\nonumber\\
&+c_{\Gamma}\big((1+(1-\theta)\tau_{m-1})\vec{u}_{f}^{h,m}-(1-\theta)\tau_{m-1}\vec{u}_{f}^{h,m-1}
-B(\vec{u}_{f}^{m+1}),\psi_{p}^{h}\big),\nonumber\\
&b\big(B(\eta_{\vec{u}_f}^{h,m+1}),q_{f}^{h}\big)=0.\nonumber
\end{align}

Setting $\vec{v}_{f}^{h}=2k_m B(\eta_{\vec{u}_f}^{h,m+1})$, $\psi_{p}^{h}=2k_mB(\eta_{\phi_p}^{h,m+1})$ and $q_{f}^{h}=2k_m B(\eta_{p}^{h,m+1})$.
Review the proof process of the Theorem \ref{theorem3}, the following equations hold
 \begin{align}\label{del11}
&2\big(A(\eta_{\vec{u}_f}^{h,m+1}),B(\eta_{\vec{u}_f}^{h,m+1})\big)_{\Omega_f}\nonumber\\
\geq&
C_{min}\bigg(D(\theta,\tau)\|\eta_{\vec{u}_f}^{h,m+1}\|^2_f
-H(\theta,\tau)\|\eta_{\vec{u}_f}^{h,m}\|^2_f
-E(\theta,\tau)\|\eta_{\vec{u}_f}^{h,m-1}\|^2_f\\
&-F(\theta,\tau)
(\|\eta_{\vec{u}_f}^{h,m}\|_f\|\eta_{\vec{u}_f}^{h,m-1}\|_f
-\|\eta_{\vec{u}_f}^{h,m}\|_f\|\eta_{\vec{u}_f}^{h,m-1}\|_f)\bigg),\nonumber
\end{align}
and
 \begin{align}\label{del12}
&2g\big(A(\eta_{\phi_p}^{h,m+1}),B(\eta_{\phi_p}^{h,m+1})\big)_{\Omega_p}\nonumber\\
\geq&
gC_{min}\bigg(D(\theta,\tau)\|\eta_{\phi_p}^{h,m+1}\|^2_p
-H(\theta,\tau)\|\eta_{\phi_p}^{h,m}\|^2_p
-E(\theta,\tau)\|\eta_{\phi_p}^{h,m-1}\|^2_p\\
&-F(\theta,\tau)(\|\eta_{\phi_p}^{h,m}\|_p\|\eta_{\phi_p}^{h,m-1}\|_p
-\|\eta_{\phi_p}^{h,m}\|_p\|\eta_{\phi_p}^{h,m-1}\|_p)\bigg),\nonumber
\end{align}
and
\begin{align}\label{del21}
2k_m a_{\Omega_f}\big(B(\eta_{\vec{u}_f}^{h,m+1}),B(\eta_{\vec{u}_f}^{h,m+1})\big)
\geq 2\hat{C}_{coe}k_m\|B(\eta_{\vec{u}_f}^{h,m+1})\|_{X_f}^2,
\end{align}
and
\begin{align}\label{del22}
2k_m a_{\Omega_p}\big(B(\eta_{\phi_p}^{h,m+1}),B(\eta_{\phi_p}^{h,m+1})\big)
\geq 2g\check{C}_{coe}k_m\|B(\eta_{\phi_p}^{h,m+1})\|_{X_p}^2,
\end{align}
and
\begin{align}\label{der11}
&2k_m\big(\frac{1}{k_m}A(\vec{u}_{f}^{m+1})-B(\vec{u}_{f,t}^{m+1}),
B(\eta_{\vec{u}_f}^{h,m+1})\big)_{\Omega_f}\nonumber\\
\leq&\frac{\hat{C}_{coe}k_m}{6}\|B(\eta_{\vec{u}_f}^{h,m+1})\|_{X_f}^2
+\frac{2(k_m+k_{m-1})^4}{\hat{C}_{coe}}\bigg[\frac{(1-\theta)^2\tau_{m-1}^4+((1-2\theta)\tau_{m-1}+1)^2}{4}\\
    &+((1-\theta)(1-2\theta)\tau_{m-1}-\theta)^2+((1-\theta)(1-2\theta)\tau_{m-1}^2)^2\bigg]
      \int_{t^{m-1}}^{t^{m+1}}\|\vec{\ul}_{ttt}\|^2_{X'_f}dt,\nonumber
\end{align}
and
\begin{align}\label{der12}
&2gk_m\big(\frac{1}{k_m}A(\phi_{p}^{m+1})-B(\phi_{p,t}^{m+1}),
B(\phi_{p}^{m+1})\big)_{\Omega_p}\nonumber\\
\leq&\frac{g\check{C}_{coe}k_m}{6}\|B(\phi_{p}^{m+1})\|_{X_p}^2
+\frac{2g(k_m+k_{m-1})^4}{\check{C}_{coe}}
\bigg[\frac{(1-\theta)^2\tau_{m-1}^4+((1-2\theta)\tau_{m-1}+1)^2}{4}\\
&+((1-\theta)(1-2\theta)\tau_{m-1}-\theta)^2+((1-\theta)(1-2\theta)\tau_{m-1}^2)^2\bigg]
\int_{t^{m-1}}^{t^{m+1}}\|\phi_{p,ttt}\|^2_{X'_p}dt,\nonumber
\end{align}
and
\begin{align}
&2k_m\big(\frac{1}{k_m}A(\zeta_{\vec{u}_{f}}^{h,m+1}),
B(\eta_{\vec{u}_f}^{h,m+1})\big)_{\Omega_f}\\
\leq&\frac{\hat{C}_{coe}k_m}{6}\|B(\eta_{\vec{u}_f}^{h,m+1})\|_{X_f}^2
+\frac{6((1-2\theta)\theta+1)^2+6(1-2\theta)^2\theta^4}{\hat{C}_{coe}}
\int_{t^{m-1}}^{t^{m+1}}\|(P_h^{\vec{u}_f}-I)\vec{u}_{f,t}\|^2_{X_f'}dt,\nonumber
\end{align}
and
\begin{align}
&2gk_m\big(\frac{1}{k_m}A(\zeta_{\phi_{p}}^{h,m+1}),
B(\eta_{\phi_p}^{h,m+1})\big)_{\Omega_p}\\
\leq&\frac{g\check{C}_{coe}k_m}{6}\|B(\eta_{\phi_p}^{h,m+1})\|_{X_p}^2
+\frac{6g((1-2\theta)\theta+1)^2+6g(1-2\theta)^2\theta^4}{\check{C}_{coe}}
\int_{t^{m-1}}^{t^{m+1}}\|(P_h^{\phi_p}-I)\phi_{p,t}\|^2_{X_p'}dt,\nonumber
\end{align}
and
\begin{align}\label{der31}
&2k_m\big<S(\vec{g}_{f}^{m+1}),B(\eta_{\vec{u}_f}^{h,m+1})\big>_{\Omega_f}\\
\leq&\frac{\hat{C}_{coe}k_m}{6}\|B(\eta_{\vec{u}_f}^{h,m+1})\|_{X_f}^2
+\frac{2(k_m+k_{m-1})^4(1-\theta)^2(1-2\theta)^2(\tau_{m-1}^2+1)\tau_{m-1}^2}
{\hat{C}_{coe}(\tau_{m-1}+1)^2}\int_{t^{m-1}}^{t^{m+1}} \|\vec{g}_{f,tt}\|_{X_f'}^2dt,\nonumber
\end{align}
and
\begin{align}\label{der32}
&2gk_m\big<S(g_{p}^{m+1}),B(\eta_{\phi_p}^{h,m+1})\big>_{\Omega_p}\\
\leq&\frac{g\check{C}_{coe}k_m}{6}\|B(\eta_{\phi_p}^{h,m+1})\|_{X_p}^2
+\frac{2g(k_m+k_{m-1})^4(1-\theta)^2(1-2\theta)^2(\tau_{m-1}^2+1)\tau_{m-1}^2}{\check{C}_{coe}(\tau_{m-1}+1)^2}\int_{t^{m-1}}^{t^{m+1}} \|g_{p,tt}\|_{X_p'}^2dt.\nonumber
\end{align}

Next, we mainly analyze the interface terms at the right-hand side, and through some simply making up terms, the interface terms can be written as
\begin{align}
\label{interfaceterm}
&-2k_mc_{\Gamma}\big(B(\eta_{\vec{u}_f}^{h,m+1}),
(1+(1-\theta)\tau_{m-1})\phi_{p}^{h,m}-(1-\theta)\tau_{m-1}\phi_{p}^{h,m-1}-B(\phi_{p}^{m+1})\big)\nonumber\\
&+2k_mc_{\Gamma}\big((1+(1-\theta)\tau_{m-1})\vec{u}_{f}^{h,m}-(1-\theta)\tau_{m-1}\vec{u}_{f}^{h,m-1}
-B(\vec{u}_{f}^{m+1}),B(\eta_{\phi_p}^{h,m+1})\big)\nonumber\\
=&-2k_mc_{\Gamma}\big(B(\eta_{\vec{u}_f}^{h,m+1}),B(\eta_{\phi_p}^{h,m+1})\big)
+2k_mc_{\Gamma}\big(B(\eta_{\vec{u}_f}^{h,m+1}),B(\eta_{\phi_p}^{h,m+1})\big)\\
&-2k_mc_{\Gamma}\big(B(\eta_{\vec{u}_f}^{h,m+1}),W(\eta_{\phi_p}^{h,m+1})\big)
+2k_mc_{\Gamma}\big(W(\eta_{\vec{u}_f}^{h,m+1}),B(\eta_{\phi_p}^{h,m+1}\big)\nonumber\\
&-2k_mc_{\Gamma}\big(B(\eta_{\vec{u}_f}^{h,m+1}),B(\zeta_{\phi_{p}}^{h,m+1})\big)
+2k_mc_{\Gamma}\big(B(\zeta_{\vec{u}_{f}}^{h,m+1}),B(\eta_{\phi_p}^{h,m+1})\big)\nonumber\\
&-2k_mc_{\Gamma}\big(B(\eta_{\vec{u}_f}^{h,m+1}),W(\zeta_{\phi_{p}}^{h,m+1})\big)
+2k_mc_{\Gamma}\big(W(\zeta_{\vec{u}_{f}}^{h,m+1}),B(\eta_{\phi_p}^{h,m+1})\big)\nonumber\\
&-2k_mc_{\Gamma}\big(B(\eta_{\vec{u}_f}^{h,m+1}),W(\phi_{p}^{m+1})\big)
+2k_mc_{\Gamma}\big(W(\vec{u}_{f}^{m+1}),B(\eta_{\phi_p}^{h,m+1})\big).\nonumber
\end{align}

For the first four terms in (\ref{interfaceterm}), they add up to zero by using perpority (\ref{agamma}).
For the next four terms, we combine (\ref{agamma}), Lemma \ref{gamma}
and taking  the appropriate $\varepsilon_3=\varepsilon_5=\frac{3}{\hat{C}_{coe}}$ and $\varepsilon_4=\varepsilon_6=\frac{3g}{\check{C}_{coe}}$ into them, we have
\begin{align}
&-2k_mc_{\Gamma}\big(B(\eta_{\vec{u}_f}^{h,m+1}),B(\zeta_{\phi_{p}}^{h,m+1})\big)
+2k_mc_{\Gamma}\big(B(\zeta_{\vec{u}_{f}}^{h,m+1}),B(\eta_{\phi_p}^{h,m+1})\big)\nonumber\\
&-2k_mc_{\Gamma}\big(B(\eta_{\vec{u}_f}^{h,m+1}),W(\zeta_{\phi_{p}}^{h,m+1})\big)
+2k_mc_{\Gamma}\big(W(\zeta_{\vec{u}_{f}}^{h,m+1}),B(\eta_{\phi_p}^{h,m+1})\big)\nonumber\\
\leq&
\frac{\hat{C}_{coe}k_m}{6}\|B(\eta_{\vec{u}_f}^{h,m+1})\|_{X_f}^2
+\frac{6C_3k_m}{\hat{C}_{coe}h}\|B(\zeta_{\phi_{p}}^{h,m+1})\|_{p}^2
+\frac{g\check{C}_{coe}k_m}{6}\|B(\eta_{\phi_p}^{h,m+1})\|_{X_p}^2
+\frac{6gC_4k_m}{\check{C}_{coe}h}\|B(\zeta_{\vec{u}_{f}}^{h,m+1})\|_{f}^2\nonumber\\
&+\frac{\hat{C}_{coe}k_m}{6}\|B(\eta_{\vec{u}_f}^{h,m+1})\|_{X_f}^2
+\frac{6C_5k_m}{\hat{C}_{coe}h}\|W(\zeta_{\phi_{p}}^{h,m+1})\|_{p}^2
+\frac{g\check{C}_{coe}k_m}{6}\|B(\eta_{\phi_p}^{h,m+1})\|_{X_p}^2
+\frac{6gC_6k_m}{\check{C}_{coe}h}\|W(\zeta_{\vec{u}_{f}}^{h,m+1})\|_{f}^2\nonumber\\
\leq&
\frac{\hat{C}_{coe}k_m}{3}\|B(\eta_{\vec{u}_f}^{h,m+1})\|_{X_f}^2
+\frac{g\check{C}_{coe}k_m}{3}\|B(\eta_{\phi_p}^{h,m+1})\|_{X_p}^2\nonumber\\
&+\frac{6gk_m}{\hat{C}_{coe}}\|B(\zeta_{\phi_{p}}^{h,m+1})\|_{p}^2
+\frac{6k_m}{\check{C}_{coe}}\|B(\zeta_{\vec{u}_{f}}^{h,m+1})\|_{f}^2
+\frac{6gk_m}{\hat{C}_{coe}}\|W(\zeta_{\phi_{p}}^{h,m+1})\|_{p}^2
+\frac{6k_m}{\check{C}_{coe}}\|W(\zeta_{\vec{u}_{f}}^{h,m+1})\|_{f}^2,\nonumber
\end{align}
where the last inequality follows from properly chosen constant $C_3,C_4,C_5$ and $C_6$.

For the last four terms, we use the Taylor expansion with the integral remainder
\begin{align*}
   &\phi_p^{m+1}=\phi_p^{m}+k_m\phi_{p,t}^{m}+\int_{t^{m}}^{t^{m+1}}(t^{m+1}-t)\phi_{p,tt}dt,\\
   &\phi_p^{m-1}=\phi_p^{m}-k_{m-1}\phi_{p,t}^{m}+\int_{t^{m}}^{t^{m-1}}(t^{m-1}-t)\phi_{p,tt}dt,
\end{align*}
then
\begin{align*}
W(\phi_{p}^{m+1})
=&\frac{2(1-\theta)^2\tau_{m-1}^2+(1-\theta)\tau_{m-1}}{\tau_{m-1}+1}\int_{t^{m-1}}^{t^{m}}(t^{m+1}-t)\phi_{p,tt}dt\\
&-\frac{2(1-\theta)^2\tau_{m-1}+1-\theta}{\tau_{m-1}+1}\int_{t^{m}}^{t^{m+1}}(t^{m-1}-t)\phi_{p,tt}dt,
\end{align*}
similarly, we have
\begin{align*}
W(\vec{u}_{f}^{m+1})
=&\frac{2(1-\theta)^2\tau_{m-1}^2+(1-\theta)\tau_{m-1}}{\tau_{m-1}+1}\int_{t^{m-1}}^{t^{m}}(t^{m+1}-t)\vec{u}_{f,tt}dt\\
&-\frac{2(1-\theta)^2\tau_{m-1}+1-\theta}{\tau_{m-1}+1}\int_{t^{m}}^{t^{m+1}}(t^{m-1}-t)\vec{u}_{f,tt}dt.
\end{align*}
By using Lemma \ref{gamma},
and taking  $\varepsilon_7=\frac{3}{\hat{C}_{coe}}$ and $\varepsilon_8=\frac{3g}{\check{C}_{coe}}$ into them,
we get
\begin{align*}
&-2k_mc_{\Gamma}\big(B(\eta_{\vec{u}_f}^{h,m+1}),W(\phi_{p}^{m+1})\big)
+2k_mc_{\Gamma}\big(W(\vec{u}_{f}^{m+1}),B(\eta_{\phi_p}^{h,m+1})\big)\\
\leq&
\frac{\hat{C}_{coe}k_m}{6}\|B(\eta_{\vec{u}_f}^{h,m+1})\|_{X_f}^2
+\frac{6C_3k_m}{\hat{C}_{coe}h}\|W(\phi_{p}^{m+1})\|_{p}^2
+\frac{g\check{C}_{coe}k_m}{6}\|B(\eta_{\phi_p}^{h,m+1})\|_{X_p}^2
+\frac{6gC_4k_m}{\check{C}_{coe}h}\big\|W(\vec{u}_{f}^{m+1})\|_{f}^2\\
\leq&
\frac{\hat{C}_{coe}k_m}{6}\|B(\eta_{\vec{u}_f}^{h,m+1})\|_{X_f}^2
+\frac{g\check{C}_{coe}k_m}{6}\|B(\eta_{\phi_p}^{h,m+1})\|_{X_p}^2\nonumber\\
&+\frac{2(k_m+k_{m-1})^4(2(1-\theta)\tau_{m-1}+1)^2(\tau_{m-1}^2+1)}{\check{C}_{coe}(\tau_{m-1}+1)^2}
\int_{t^{m-1}}^{t^{m+1}}\|\vec{u}_{f,tt}\|_f^2dt\nonumber\\
&+\frac{2g(k_m+k_{m-1})^4(2(1-\theta)\tau_{m-1}+1)^2(\tau_{m-1}^2+1)}{\hat{C}_{coe}(\tau_{m-1}+1)^2}
\int_{t^{m-1}}^{t^{m+1}}\|\phi_{p,tt}\|_p^2dt,\nonumber
\end{align*}
where the inequality follows from properly chosen constant $C_7$ and $C_8$.

Combining the above analysis, the interface terms can be handled as
\begin{align}\label{der4}
&-2k_mc_{\Gamma}\big(B(\eta_{\vec{u}_f}^{h,m+1}),
(1+(1-\theta)\tau_{m-1})\phi_{p}^{h,m}-(1-\theta)\tau_{m-1}\phi_{p}^{h,m-1}-B(\phi_{p}^{m+1})\big)\nonumber\\
&+2k_mc_{\Gamma}\big((1+(1-\theta)\tau_{m-1})\vec{u}_{f}^{h,m}-(1-\theta)\tau_{m-1}\vec{u}_{f}^{h,m-1}
-B(\vec{u}_{f}^{m+1}),B(\eta_{\phi_p}^{h,m+1})\big)\\
\leq&
\frac{\hat{C}_{coe}k_m}{2}\big\|B(\eta_{\vec{u}_f}^{h,m+1})\big\|_{X_f}^2
+\frac{g\check{C}_{coe}k_m}{2}\big\|B(\eta_{\phi_p}^{h,m+1})\big\|_{X_p}^2\nonumber\\
&+\frac{6gk_m}{\hat{C}_{coe}}\big\|B(\zeta_{\phi_{p}}^{h,m+1})\big\|_{p}^2
+\frac{6k_m}{\check{C}_{coe}}\big\|B(\zeta_{\vec{u}_{f}}^{h,m+1})\big\|_{f}^2
+\frac{6gk_m}{\hat{C}_{coe}}\big\|W(\zeta_{\phi_{p}}^{h,m+1})\big\|_{p}^2
+\frac{6k_m}{\check{C}_{coe}}\big\|W(\zeta_{\vec{u}_{f}}^{h,m+1})\big\|_{f}^2\nonumber\\
&+\frac{2(k_m+k_{m-1})^4(2(1-\theta)\tau_{m-1}+1)^2(\tau_{m-1}^2+1)}{\check{C}_{coe}(\tau_{m-1}+1)^2}
\int_{t^{m-1}}^{t^{m+1}}\|\vec{u}_{f,tt}\|_f^2dt\nonumber\\
&+\frac{2g(k_m+k_{m-1})^4(2(1-\theta)\tau_{m-1}+1)^2(\tau_{m-1}^2+1)}{\hat{C}_{coe}(\tau_{m-1}+1)^2}
\int_{t^{m-1}}^{t^{m+1}}\|\phi_{p,tt}\|_p^2dt.\nonumber
\end{align}

Combining the (\ref{del11})-(\ref{der4})  and sum the (\ref{de}) over $m=1,2,...,N-1$, and using the same method as Theorem \ref{theorem3}. Let $\tilde{k}=\max \limits_{1\leq m\leq N-1}\{k_m+k_{m-1}\}$, we have
\begin{align*}
&C_{min}I(\theta,\tau)\|\eta_{\vec{u}_f}^{h,N}\|_f^2
+gC_{min}I(\theta,\tau)\|\eta_{\phi_p}^{h,N}\|_p^2
+\hat{C}_{coe}\sum_{m=1}^{N-1}\big[k_m\big\|B(\eta_{\vec{u}_f}^{h,m+1})\big\|_{X_f}^2\big]
+g\check{C}_{coe}\sum_{m=1}^{N-1}\big[k_m\|b(\eta_{\phi_p}^{h,m+1})\|_{X_p}^2\big]\\
\leq&
\sum_{m=1}^{N-1}\left[
\frac{2\tilde{k}^4}{\hat{C}_{coe}}\bigg(\frac{(1-\theta)^2\tau_{m-1}^4+((1-2\theta)\tau_{m-1}+1)^2}{4}\right.\nonumber\\
    &\left.+((1-\theta)(1-2\theta)\tau_{m-1}-\theta)^2+((1-\theta)(1-2\theta)\tau_{m-1}^2)^2\bigg)
    \int_{t^{m-1}}^{t^{m+1}}\|\vec{u}_{f,ttt}\|_{X_f'}^2dt\right.\\
&\left.+\frac{2g\tilde{k}^4}{\check{C}_{coe}}\bigg(\frac{(1-\theta)^2\tau_{m-1}^4+((1-2\theta)\tau_{m-1}+1)^2}{4}\right.\nonumber\\
    &\left.+((1-\theta)(1-2\theta)\tau_{m-1}-\theta)^2+((1-\theta)(1-2\theta)\tau_{m-1}^2)^2\bigg)
    \int_{t^{m-1}}^{t^{m+1}}\|\phi_{p,ttt}\|_{X_p'}^2dt\right.\\
&\left.+\frac{6((1-2\theta)\tau_{m-1}+1)^2+6(1-2\theta)^2\tau_{m-1}^4}{\hat{C}_{coe}}
\int_{t^{m-1}}^{t^{m+1}}\|(P_h^{\vec{u}_{f}}-I)\vec{u}_{f,t}\|^2_{X_f'}dt\right.\\
&\left.+\frac{6g((1-2\theta)\tau_{m-1}+1)^2+6g(1-2\theta)^2\tau_{m-1}^4}{\check{C}_{coe}}
\int_{t^{m-1}}^{t^{m+1}}\|(P_h^{\phi_p}-I)\phi_{p,t}\|^2_{X_p'}dt\right.\nonumber\\
&\left.+\frac{2\tilde{k}^4(1-\theta)^2(1-2\theta)^2(\tau_{m-1}^2+1)\tau_{m-1}^2}{\hat{C}_{coe}(\tau_{m-1}+1)^2}\int_{t^{m-1}}^{t^{m+1}} \|\vec{g}_{f,tt}\|_{X_f'}^2dt\right.\\
&\left.+\frac{2g\tilde{k}^4(1-\theta)^2(1-2\theta)^2(\tau_{m-1}^2+1)\tau_{m-1}^2}{\check{C}_{coe}(\tau_{m-1}+1)^2}\int_{t^{m-1}}^{t^{m+1}} \|g_{p,tt}\|_{X_p'}^2dt\right.\\
&\left.+\frac{2\tilde{k}^4(2(1-\theta)\tau_{m-1}+1)^2(\tau_{m-1}^2+1)}{\check{C}_{coe}(\tau_{m-1}+1)^2}
\int_{t^{m-1}}^{t^{m+1}}\|\vec{u}_{f,tt}\|_f^2dt\right.\\
&\left.+\frac{2g\tilde{k}^4(2(1-\theta)\tau_{m-1}+1)^2(\tau_{m-1}^2+1)}{\hat{C}_{coe}(\tau_{m-1}+1)^2}
\int_{t^{m-1}}^{t^{m+1}}\|\phi_{p,tt}\|_p^2dt\right.\\
&\left.+\frac{6g\tilde{k}}{\hat{C}_{coe}}\|B(\zeta_{\phi_{p}}^{h,m+1})\|_{p}^2
+\frac{6\tilde{k}}{\check{C}_{coe}}\|B(\zeta_{\vec{u}_{f}}^{h,m+1})\|_{f}^2
+\frac{6g\tilde{k}}{\hat{C}_{coe}}\|W(\zeta_{\phi_{p}}^{h,m+1})\|_{p}^2
+\frac{6\tilde{k}}{\check{C}_{coe}}\|W(\zeta_{\vec{u}_{f}}^{h,m+1})\|_{f}^2\right].
\end{align*}

Finally, using the triangle inequality, we end the proof.
\end{proof}
\section{Numerical experiments\label{5}}
In this section, we do two numerical experiments.
In the first test, we verify the effectiveness of the coupled and decoupled variable time-stepping algorithms by three different sets of variation rules for time steps $k_m^{1}$, $k_m^{2}$, $k_m^{3}$.
In the second test, we use the adaptive algorithm to verify the second-order convergence.
The following numerical experiments are implemented using the Software package FreeFEM++, and we set all the physical parameters $n$, $\rho$, $g$, $\nu$, $\mathds{K}$, $S_0$ and $\alpha$ are equal to 1, and the initial conditions, boundary conditions and the source terms follow from the exact solution. We use the well-known Tayor-Hood elements(P2-P1) for the fluid equations and the continuous piecewise quadratic elements(P2) for the porous media flow equation.
\begin{figure}[!ht]
\centering
\subfigure[Coupled algorithm with $\theta=0.2$]
{
\begin{minipage}{0.3\linewidth}
\centering
\includegraphics[width=0.9\linewidth]{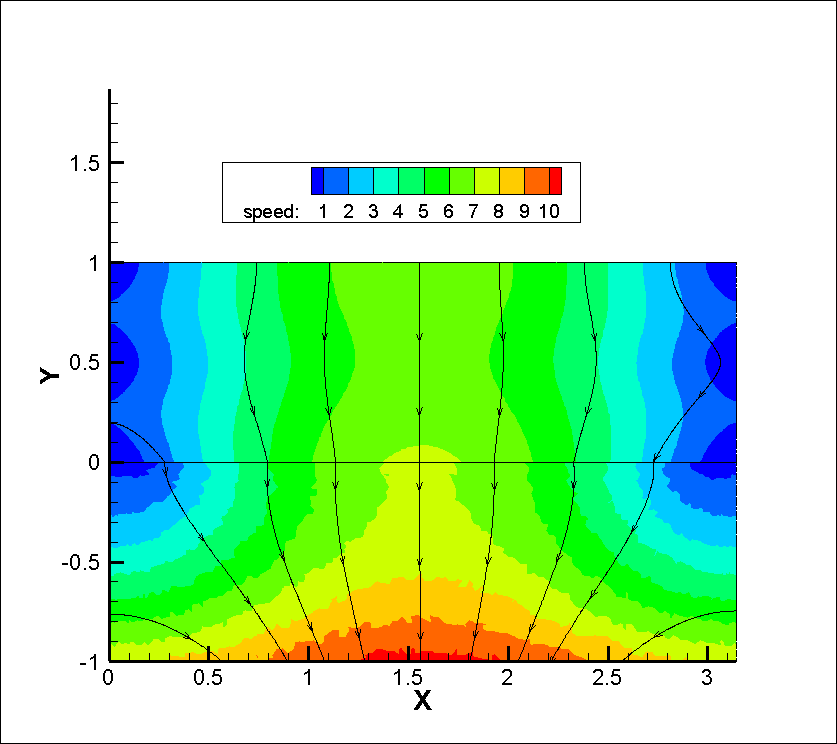}
\end{minipage}
}
\subfigure[Coupled algorithm with $\theta=0.3$]
{
\begin{minipage}{0.3\linewidth}
\centering
\includegraphics[width=0.9\linewidth]{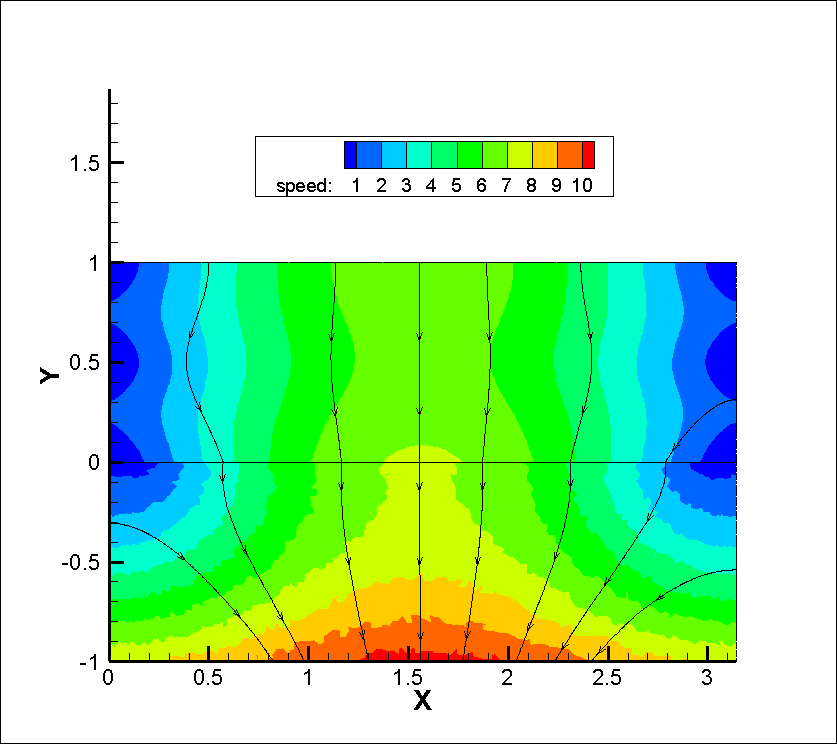}
\end{minipage}
}
\subfigure[Coupled algorithm with $\theta=0.4$]
{
\begin{minipage}{0.3\linewidth}
\centering
\includegraphics[width=0.9\linewidth]{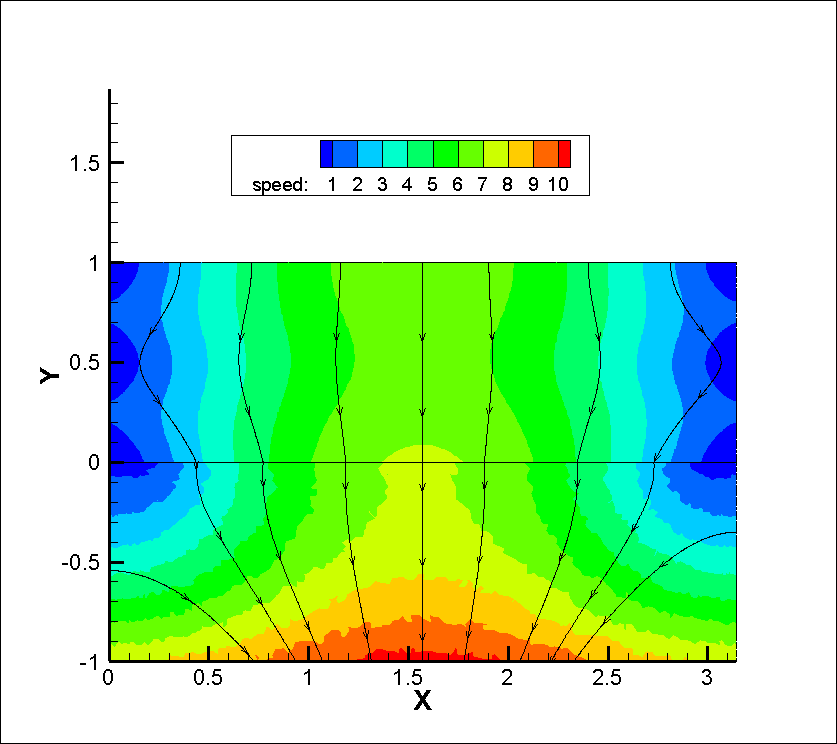}
\end{minipage}
}
\subfigure[Decoupled scheme with $\theta=0.2$]
{
\begin{minipage}{0.3\linewidth}
\centering
\includegraphics[width=0.9\linewidth]{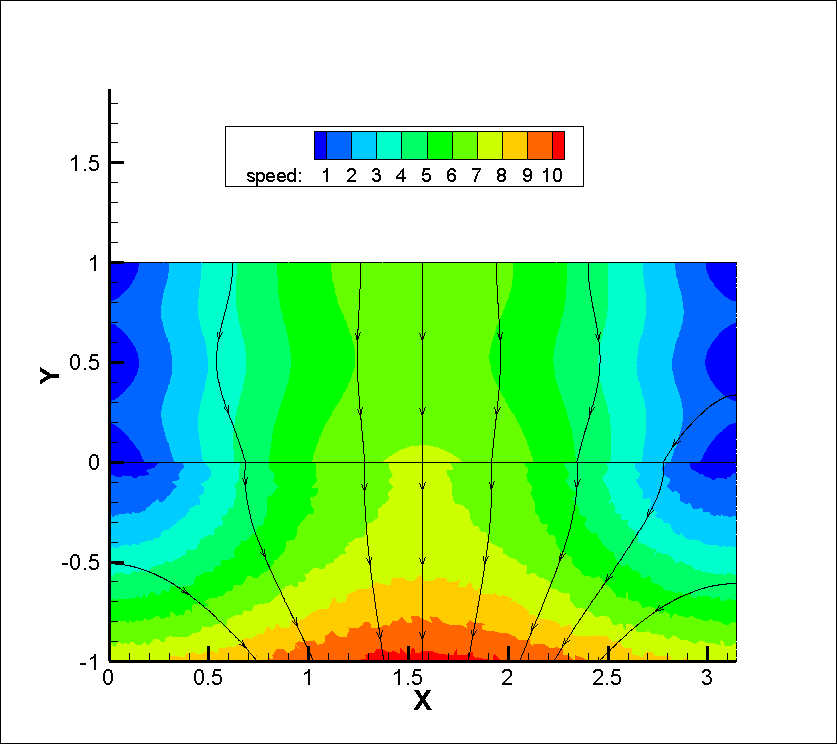}
\end{minipage}
}
\subfigure[Decoupled algorithm with $\theta=0.3$]
{
\begin{minipage}{0.3\linewidth}
\centering
\includegraphics[width=0.9\linewidth]{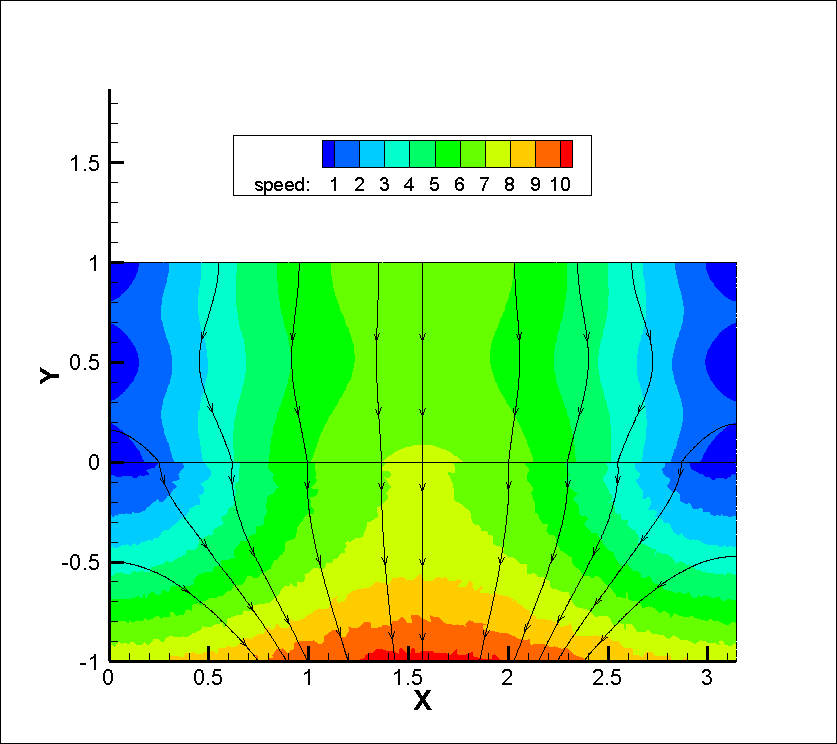}
\end{minipage}
}
\subfigure[Decoupled algorithm with $\theta=0.4$]
{
\begin{minipage}{0.3\linewidth}
\centering
\includegraphics[width=0.9\linewidth]{km1decoupled4}
\end{minipage}
}
\caption{Speed contours and velocity streamlines for Linear Multistep method plus time filter with $k_m^{1}$.}
\label{fig2}
\end{figure}

\begin{figure}[!ht]
\centering
\subfigure[Coupled algorithm with $\theta=0.2$]
{
\begin{minipage}{0.3\linewidth}
\centering
\includegraphics[width=0.9\linewidth]{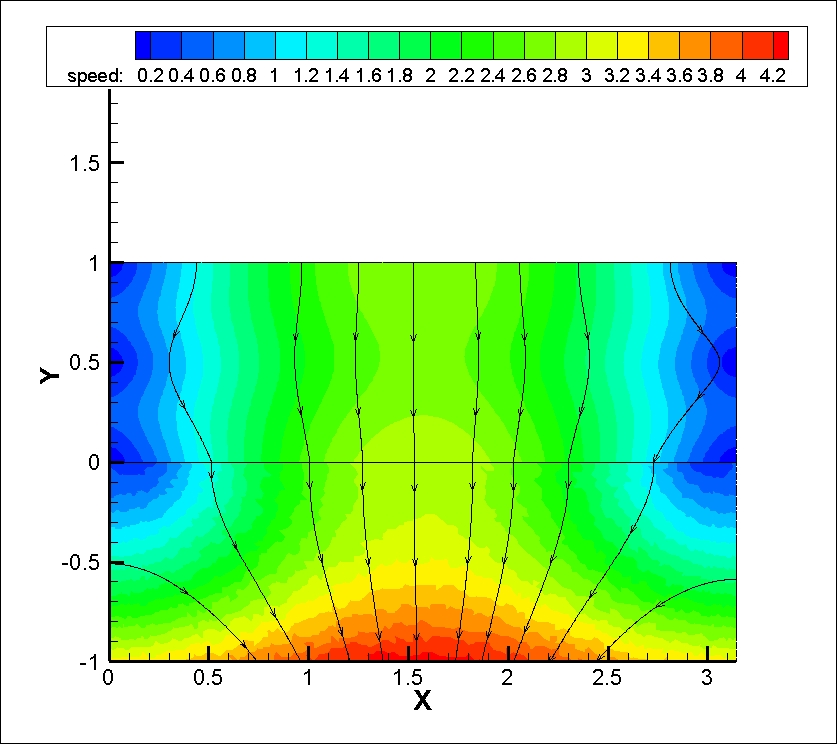}
\end{minipage}
}
\subfigure[Coupled algorithm with $\theta=0.3$]
{
\begin{minipage}{0.3\linewidth}
\centering
\includegraphics[width=0.9\linewidth]{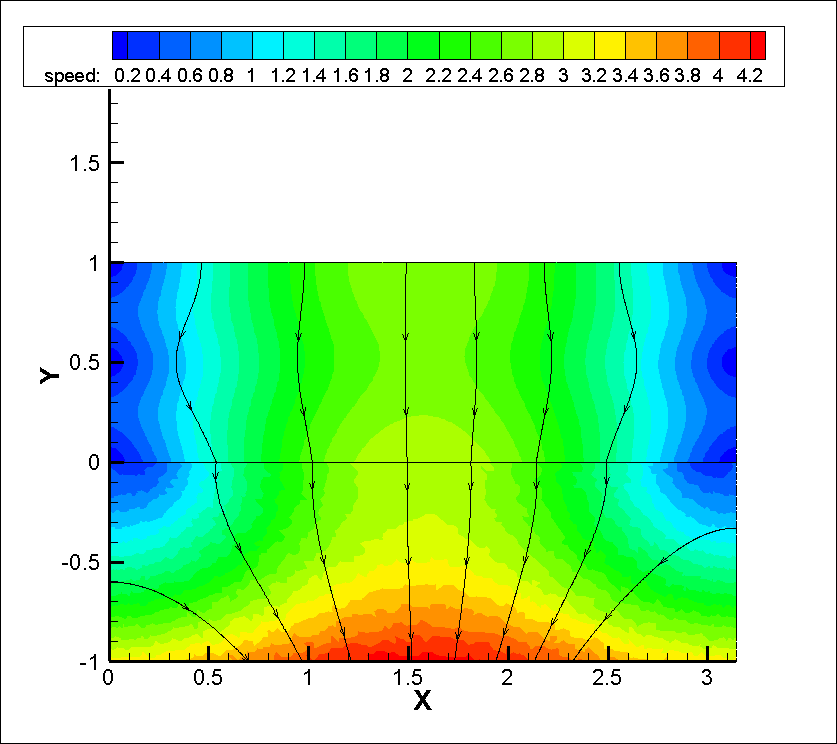}
\end{minipage}
}
\subfigure[Coupled scheme with $\theta=0.4$]
{
\begin{minipage}{0.3\linewidth}
\centering
\includegraphics[width=0.9\linewidth]{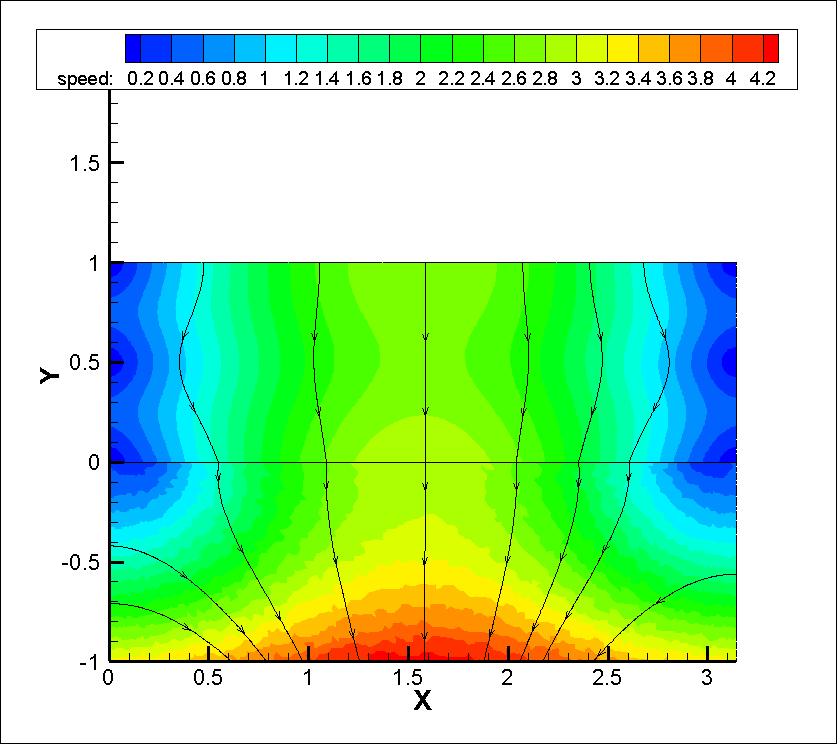}
\end{minipage}
}
\subfigure[Decoupled algorithm with $\theta=0.2$]
{
\begin{minipage}{0.3\linewidth}
\centering
\includegraphics[width=0.9\linewidth]{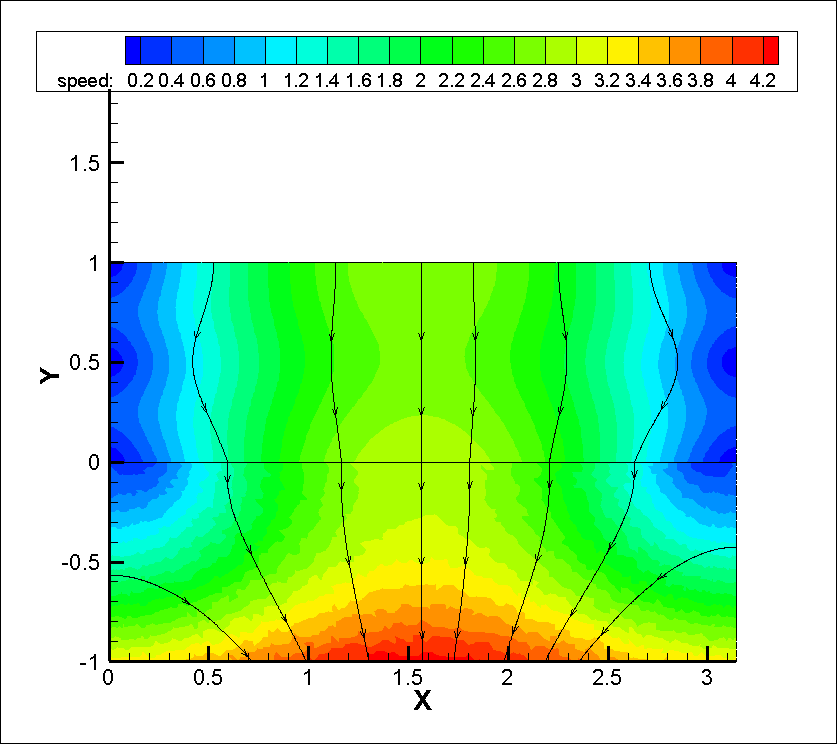}
\end{minipage}
}
\subfigure[Decoupled algorithm with $\theta=0.3$]
{
\begin{minipage}{0.3\linewidth}
\centering
\includegraphics[width=0.9\linewidth]{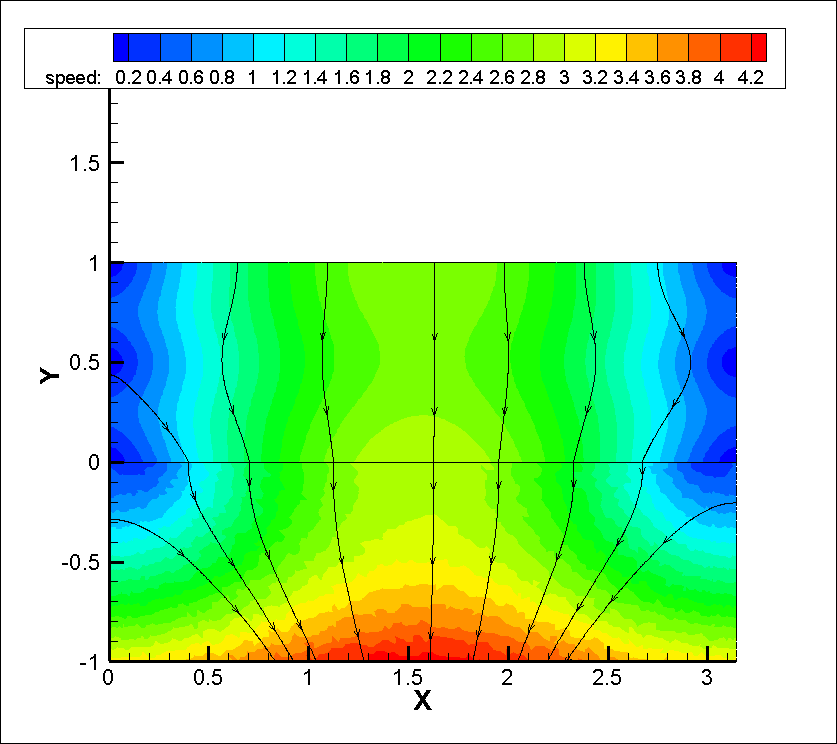}
\end{minipage}
}
\subfigure[Decoupled algorithm with $\theta=0.4$]
{
\begin{minipage}{0.3\linewidth}
\centering
\includegraphics[width=0.9\linewidth]{km2decoupled4}
\end{minipage}
}
\caption{Speed contours and velocity streamlines for Linear Multistep method plus time filter with $k_m^{2}$.}
\label{fig3}
\end{figure}

\begin{figure}[!ht]
\centering
\subfigure[Coupled algorithm with $\theta=0.2$]
{
\begin{minipage}{0.3\linewidth}
\centering
\includegraphics[width=0.9\linewidth]{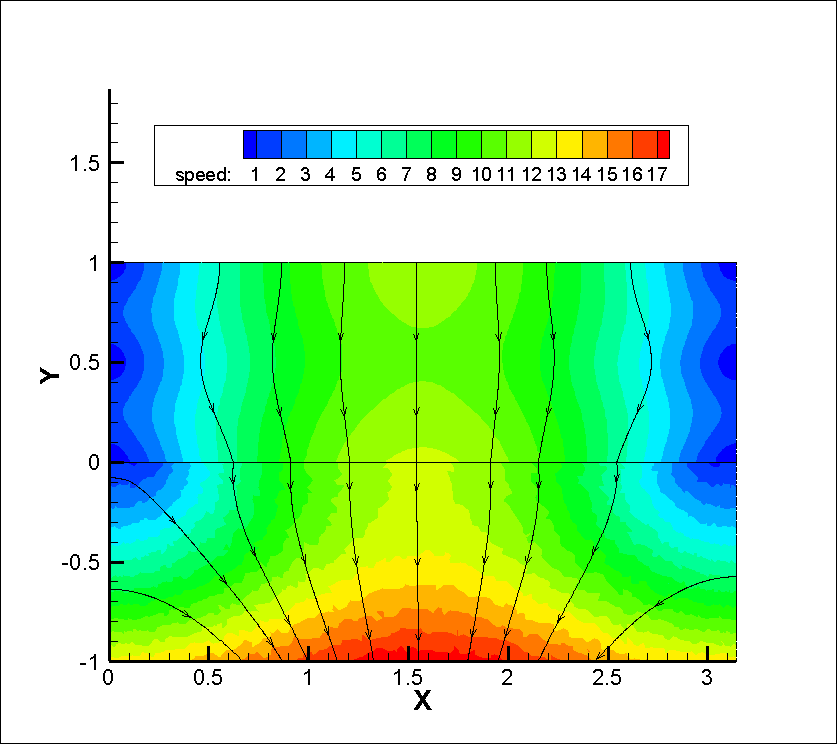}
\end{minipage}
}
\subfigure[Coupled algorithm with $\theta=0.3$]
{
\begin{minipage}{0.3\linewidth}
\centering
\includegraphics[width=0.9\linewidth]{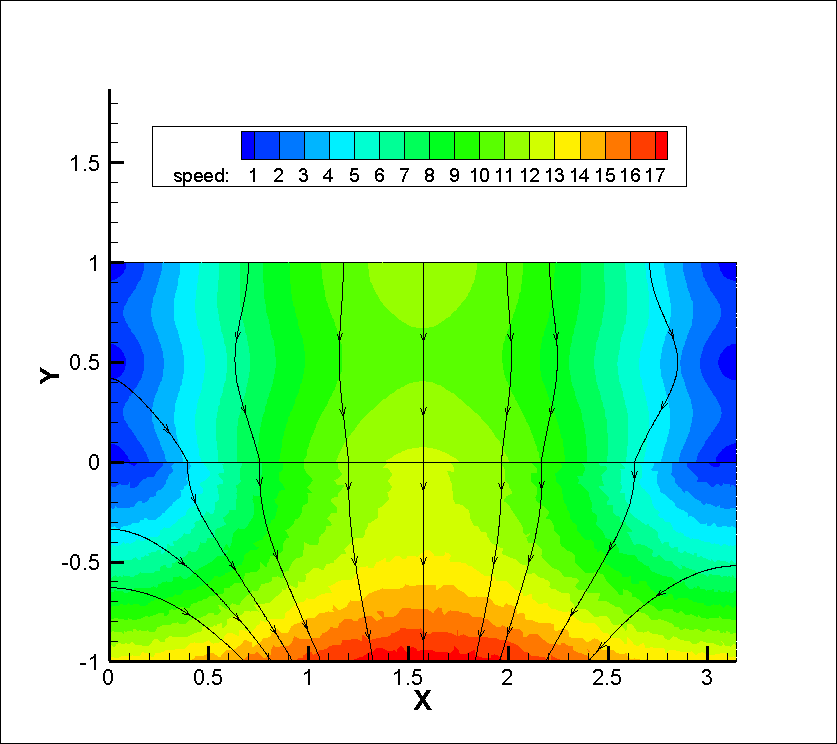}
\end{minipage}
}
\subfigure[Coupled algorithm with $\theta=0.4$]
{
\begin{minipage}{0.3\linewidth}
\centering
\includegraphics[width=0.9\linewidth]{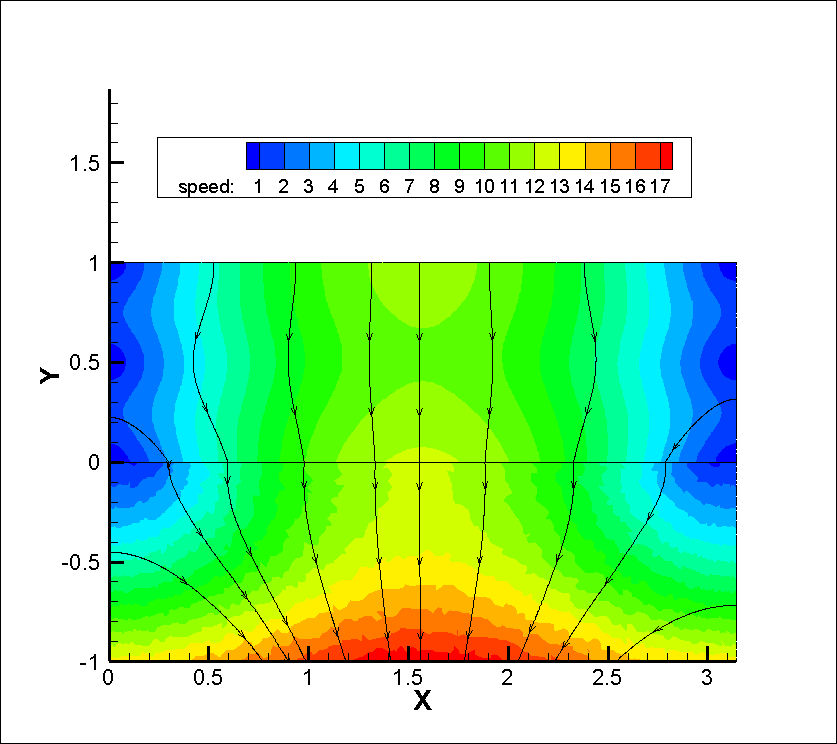}
\end{minipage}
}
\subfigure[Decoupled algorithm with $\theta=0.2$]
{
\begin{minipage}{0.3\linewidth}
\centering
\includegraphics[width=0.9\linewidth]{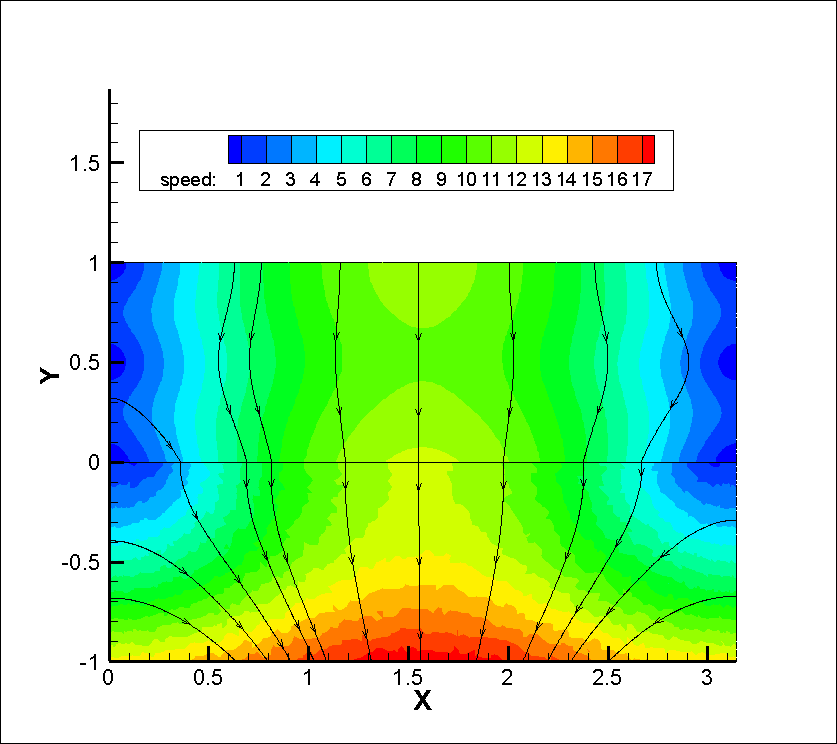}
\end{minipage}
}
\subfigure[Decoupled algorithm with $\theta=0.3$]
{
\begin{minipage}{0.3\linewidth}
\centering
\includegraphics[width=0.9\linewidth]{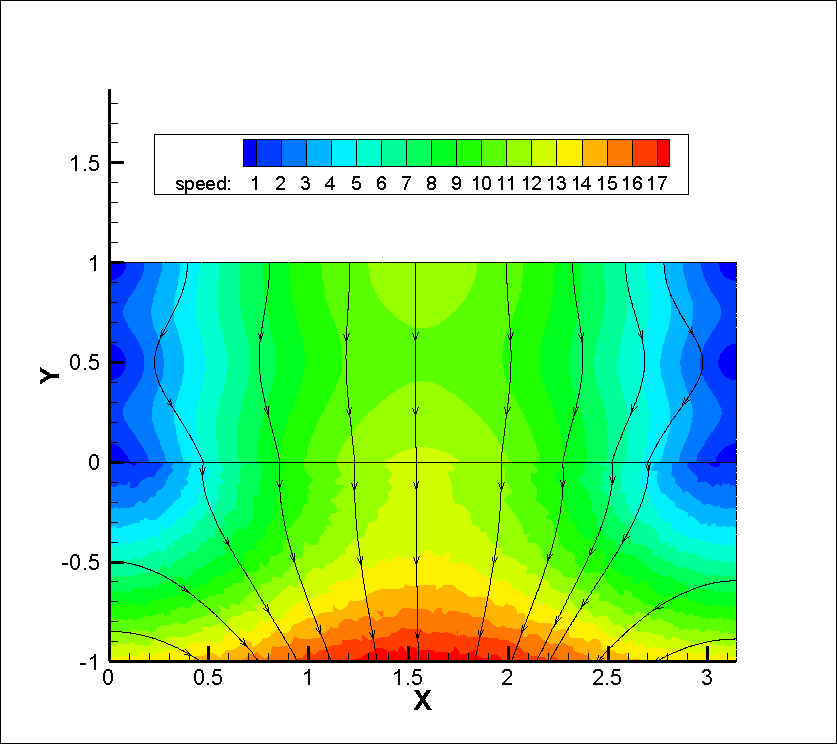}
\end{minipage}
}
\subfigure[Decoupled algorithm with $\theta=0.4$]
{
\begin{minipage}{0.3\linewidth}
\centering
\includegraphics[width=0.9\linewidth]{km3decoupled4}
\end{minipage}
}
\caption{Speed contours and velocity streamlines for Linear Multistep method plus time filter with $k_m^{3}$.}
\label{fig4}
\end{figure}
\subsection{Test of the effectiveness for the variable time-stepping algorithms}
Here we use the numerical test from \cite{AEEA}, let the computational domain $\Omega$ be composed of $\Omega_f=(0,\pi)\times(0,1)$ and $\Omega_p=(0,\pi)\times(-1,0)$ with the interface $\Gamma=(0,\pi)\times{0}$. The exact solution is given by
\begin{eqnarray*}
&&\vec{u}_f=[\frac{1}{\pi}sin(2\pi y)cos(x)e^t, (-2+\frac{1}{\pi^2}sin^2(\pi y))sin(x)e^t],\\
&&p_f=0,\\
&&\phi_p=(e^y-e^{-y})sin(x)e^t.
\end{eqnarray*}

For this test, we change the time step size to observe the effect on the experimental results and set the diameters $h=1/100$.
We use coupled and decoupled algorithms to this test problem for 40 time steps and refer to the time step size $k_m^{1}$, $k_m^{2}$ and $k_m^{3}$ similar to that in \cite{AVAC}:
\begin{equation*}
k_m^{1}=
0.01+0.05t_m,\qquad m\geq0,
\end{equation*}
and
\begin{equation*}
k_m^{2}=\left\{
\begin{aligned}
&0.01,&\qquad 0\leq m\leq 10,\\
&0.01+0.05sin(10t_m),&\qquad m>10,
\end{aligned}
\right .
\end{equation*}
and
\begin{equation*}
k_m^{3}=
0.1-0.05t_m,\qquad m\geq0.
\end{equation*}

Figure \ref{fig2}-Figure \ref{fig4} show speed contours and velocity streamlines of coupled and decoupled Linear Multi-step methods plus time filter for $\theta=0.2$, $0.3$, $0.4$  with different time step size $k_m^{1}$, $k_m^{2}$ and $k_m^{3}$, respectively. From these figures, we can see that these variable time-stepping algorithms can effectively simulate fluid motion regardless of whether the time step increases or decreases.
\subsection{Test of the convergence and efficiency for the coupled and decoupled adaptive algorithms}
Here we use the numerical test from \cite{MMXZ}, considering the model problem on $\Omega_f=(0,1)\times(1,2)$ and $\Omega_p=(0,1)\times(0,1)$ with the interface $\Gamma=(0,1)\times{1}$. The exact solution is:
\begin{eqnarray*}
&&\vec{u}_f=((x^2(y-1)^2+y)cos(t),-\frac{2}{3}x(y-1)^3cos(t)+(2-\pi\sin(\pi x))cos(t)),\\
&&p_f=(2-\pi\sin(\pi x))\sin(\frac{1}{2}\pi y)cos(t),\\
&&\phi_p=(2-\pi\sin(\pi x))(1-y-\cos(\pi y))cos(t).
\end{eqnarray*}

In order to demonstrate the convergence and efficiency of the variable time-stepping algorithm for coupled and decoupled Linear Multi-step methods plus time filter, we show the convergence order by results of adaptive algorithm. Here we choose the Linear Multi-step method when $\theta=0.3$ and list the error, convergence order and CPU time in Table \ref{coupledlmt}-Table \ref{decoupledlmtfh}.

We first calculate the convergence order and CPU time of the coupled and decoupled Linear Multi-step methods and the Linear Multi-step methods plus time filter algorithms by varying the time step, and fix the mesh size $h=\frac{1}{120}$ and the final time $T=1.0$. In this experiment, we vary the tolerance $\varepsilon$ from 1e-3 to 1e-6, and use $\bar{\Delta t}$  denote the average time step size.
We define
$$\rho_{\bar{\Delta t},v}=\frac{lg(e_{v}(\bar{\Delta t}_{1})/e_{v}(\bar{\Delta t}_{2}) )}{lg(\bar{\Delta t}_{1}/\bar{\Delta t}_{2})}$$
where $e_{v}(\bar{\Delta t})=\big(\sum\limits^{N}_{i=2} k_{i}\frac{\|v(t_{i})-v^{h,i}\|_{L^2}}
{\|v(t_{i})\|_{L^2}}\big)^{\frac{1}{2}}$ $v=\vec{u}_f$, $p_f$ and $\phi_p$.
Table \ref{coupledlmt} and Table \ref{decoupledlmt} show the error, convergence order and CPU time of the coupled and decoupled adaptive Linear Multi-step method, and Table \ref{coupledlmtft} and Table \ref{decoupledlmtft} show the error, convergence order and CPU time of the coupled and decoupled adaptive Linear Multi-step methods plus time filter algorithms, respectively.
From Table \ref{coupledlmt} and Table \ref{decoupledlmt}, it can be found that the convergence order of the Linear Multi-step method is the first order, and from Table \ref{coupledlmtft} and Table \ref{decoupledlmtft} , we can find that the convergence order of the Linear Multi-step method plus time filter algorithm can reach the second order.

Then we calculate the convergence order and CPU time of the coupled and decoupled Linear Multi-step methods and the Linear Multi-step methods plus time filter by varying the mesh size $h$ with a fixed time step $\Delta t=0.01$ and the final time $T=1.0$. Similarly, we estimate the corresponding convergence order by
$$\rho_{h,v}=\frac{lg(e_{v}(h_1)/e_{v}(h_2))}{lg(h_1/h_2)}$$
where $e_{v}(h)=\|v(t_{i})-v^{h,i}\|_{L^2}$ is the error computed by the algorithm with fixed time step $\Delta t$. We change the mesh size $h$ from $\frac{1}{4}$ to $\frac{1}{64}$, and get error, convergence order and CPU time of coupled and decoupled Linear Multi-step methods and the Linear Multi-step methods plus time filter algorithms in the Table \ref{coupledlmh}-Table \ref{decoupledlmtfh}.
From \ref{coupledlmtfh} and \ref{decoupledlmtfh}, we can clearly state that the coupled and decoupled Linear Multi-step methods plus time filter are convergent in mesh size $h$ and the order of convergence both are $O(h^2)$.

Finally, comparing the convergence order and CPU time in Table \ref{coupledlmtfh} and Table \ref{decoupledlmtfh}, we can find that the coupled and decoupled Linear Multi-step methods plus time filter algorithms can achieve second-order convergence, but the decoupled Linear Multi-step methods plus time filter algorithm takes less computation time, that is, the decoupled algorithm is more efficient.
\begin{table}[!ht]
\caption{The convergence order and CPU time of coupled adaptive algorithm for Linear Multi-step method at time $T=1$, with fixed mesh size $h=\frac{1}{120}$.}
\centering
\begin{tabular}{clclclclclclclcl}
\hline
$\bar{\Delta t}$&
$\big(\sum\limits^{N}_{i=2} k_{i}\frac{\|\vec{\u}_{f}(t_{i})-\vec{u}_{f}^{h,i}\|_{f}}
{\|\vec{\u}_{f}(t_{i})\|_{f}}\big)^{\frac{1}{2}}$&
$\rho_{\u_f}$&
$\big(\sum\limits_{i=2}^{N}k_{i}\frac{\|{p}_{f}(t_{i})-{p}_{f}^{h,i}\|_{f}}
{\|{p}_{f}(t_{i})\|_{f}}\big)^{\frac{1}{2}}$&
$\rho_{p_f}$&
$\big(\sum\limits_{i=2}^{N}k_{i}\frac{\|{\phi}_{f}(t_{i})-{phi}_{f}^{h,i}\|_{f}}
{\|{\phi}_{f}(t_{i})\|_{f}}\big)^{\frac{1}{2}}$&
$\rho_{\phi_p}$&
$CPU(s)$&\\
\hline
$\frac{1}{35}$ &7.42414e-05&-   &0.0116894&-   &0.000269309&-      &892.42\\
$\frac{1}{89}$ &2.33383e-05&1.24&0.003654 &1.25&8.50719e-05&1.23&2465.76\\
$\frac{1}{259}$&7.57547e-06&1.05&0.001212 &1.03&2.76377e-05&1.05&7157.29\\
$\frac{1}{803}$&2.39375e-06&1.02&0.0003901&1.01&8.74451e-06&1.02&18066.00\\                                                              \hline
\end{tabular}
\label{coupledlmt}
\end{table}
\begin{table}[!ht]
\caption{The convergence order and CPU time of decoupled adaptive algorithm for Linear Multi-step method at time $T=1$, with fixed mesh size $h=\frac{1}{120}$.}
\centering
\begin{tabular}{clclclclclclclcl}
\hline
$\bar{\Delta t}$&
$\big(\sum\limits^{N}_{i=2} k_{i}\frac{\|\vec{\u}_{f}(t_{i})-\vec{u}_{f}^{h,i}\|_{f}}
{\|\vec{\u}_{f}(t_{i})\|_{f}}\big)^{\frac{1}{2}}$&
$\rho_{\u_f}$&
$\big(\sum\limits_{i=2}^{N}k_{i}\frac{\|{p}_{f}(t_{i})-{p}_{f}^{h,i}\|_{f}}
{\|{p}_{f}(t_{i})\|_{f}}\big)^{\frac{1}{2}}$&
$\rho_{p_f}$&
$\big(\sum\limits_{i=2}^{N}k_{i}\frac{\|{\phi}_{f}(t_{i})-{phi}_{f}^{h,i}\|_{f}}
{\|{\phi}_{f}(t_{i})\|_{f}}\big)^{\frac{1}{2}}$&
$\rho_{\phi_p}$&
$CPU(s)$&\\
\hline
$\frac{1}{35}$ &0.000227219&-   &0.0155927  &-   &0.00220716 &-   &594.259\\
$\frac{1}{89}$ &6.81412e-05&1.29&0.00481806 &1.25&0.00067191 &1.27&1679.09\\
$\frac{1}{259}$&2.22322e-05&1.05&0.0016059  &1.03&0.000218926&1.05&4557.1\\
$\frac{1}{803}$&6.97375e-06&1.02&0.000512693&1.01&6.87357e-05&1.02&12039.1\\                                                              \hline
\end{tabular}
\label{decoupledlmt}
\end{table}
\begin{table}[!ht]
\caption{The convergence order and CPU time of coupled adaptive algorithm for Linear Multi-step method plus time filter at time $T=1$, with fixed mesh size $h=\frac{1}{120}$.}
\centering
\begin{tabular}{clclclclclclclcl}
\hline
$\bar{\Delta t}$&
$\big(\sum\limits^{N}_{i=2} k_{i}\frac{\|\vec{\u}_{f}(t_{i})-\vec{u}_{f}^{h,i}\|_{f}}
{\|\vec{\u}_{f}(t_{i})\|_{f}}\big)^{\frac{1}{2}}$&
$\rho_{\u_f}$&
$\big(\sum\limits_{i=2}^{N}k_{i}\frac{\|{p}_{f}(t_{i})-{p}_{f}^{h,i}\|_{f}}
{\|{p}_{f}(t_{i})\|_{f}}\big)^{\frac{1}{2}}$&
$\rho_{p_f}$&
$\big(\sum\limits_{i=2}^{N}k_{i}\frac{\|{\phi}_{f}(t_{i})-{phi}_{f}^{h,i}\|_{f}}
{\|{\phi}_{f}(t_{i})\|_{f}}\big)^{\frac{1}{2}}$&
$\rho_{\phi_p}$&
$CPU(s)$&\\
\hline
$\frac{1}{12}$ &0.00465218 &-   &0.0485196  &-   &0.00429906 &-   &232.856\\
$\frac{1}{33}$ &0.00019342 &2.24&0.00848417 &1.23&0.000174177&2.26&940.701\\
$\frac{1}{107}$&2.04724e-05&1.91&0.00243725 &1.06&1.82232e-05&1.92&3758.53\\
$\frac{1}{292}$&2.32442e-06&2.17&0.000788211&1.12&2.09898e-06&2.15&10744.3\\                                                              \hline
\end{tabular}
\label{coupledlmtft}
\end{table}
\begin{table}[!ht]
\caption{The convergence order and CPU time of decoupled adaptive algorithm for Linear Multi-step method plus time filter at time $T=1$, with fixed mesh size $h=\frac{1}{120}$.}
\centering
\begin{tabular}{clclclclclclclcl}
\hline
$\bar{\Delta t}$&
$\big(\sum\limits^{N}_{i=2} k_{i}\frac{\|\vec{\u}_{f}(t_{i})-\vec{u}_{f}^{h,i}\|_{f}}
{\|\vec{\u}_{f}(t_{i})\|_{f}}\big)^{\frac{1}{2}}$&
$\rho_{\u_f}$&
$\big(\sum\limits_{i=2}^{N}k_{i}\frac{\|{p}_{f}(t_{i})-{p}_{f}^{h,i}\|_{f}}
{\|{p}_{f}(t_{i})\|_{f}}\big)^{\frac{1}{2}}$&
$\rho_{p_f}$&
$\big(\sum\limits_{i=2}^{N}k_{i}\frac{\|{\phi}_{f}(t_{i})-{phi}_{f}^{h,i}\|_{f}}
{\|{\phi}_{f}(t_{i})\|_{f}}\big)^{\frac{1}{2}}$&
$\rho_{\phi_p}$&
$CPU(s)$&\\
\hline
$\frac{1}{12}$ &0.00127755 &-   &0.0215585 &-   &0.00122245 &-   &217.864\\
$\frac{1}{33}$ &0.000194746&1.85&0.0084949 &0.91&0.000183002&2.26&611.952\\
$\frac{1}{107}$&2.05827e-05&1.91&0.00243772&1.06&1.90888e-05&1.92&1487.25\\
$\frac{1}{292}$&2.34181e-06&2.17&0.00078885&1.12&2.20949e-06&2.15&3884.59\\                                                                \hline
\end{tabular}
\label{decoupledlmtft}
\end{table}

\begin{table}[!ht]
\caption{The convergence order of coupled Linear Multi-step method at time $T=1$, with varying mesh size $h$, but fixed time step $\Delta t=0.01$.}
\centering
\begin{tabular}{clclclclclclclcl}
\hline
$h$&
$\|\vec{u}_{f}-\vec{u}_{f}^{h}\|_{L^2}$&
$\rho_{h,\u_f}$&
$\|p_{f}-p_{f}^{h}\|_{L^2}$&
$\rho_{h,p_f}$&
$\|\phi_p-\phi_{p}^{h}\|_{L^2}$&
$\rho_{h,\phi_p}$&
$CPU(s)$&\\
\hline
$\frac{1}{4}$  &0.0697284  &-   &0.353445  &-   &0.0665296  &-   &1.439\\
$\frac{1}{8}$  &0.0176147  &1.98&0.108404  &1.71&0.0185575  &1.84&5.192\\
$\frac{1}{16}$ &0.00441526 &2.00&0.0358228 &1.60&0.00480244 &1.95&21.161\\
$\frac{1}{32}$ &0.00110726 &2.00&0.0123945 &1.53&0.00122367 &1.97&84.602\\
$\frac{1}{64}$ &0.000280437&1.98&0.00463628&1.42&0.000319996&1.93&345.47\\                                                                    \hline
\end{tabular}
\label{coupledlmh}
\end{table}
\begin{table}[!ht]
\caption{The convergence order of decoupled Linear Multi-step method at time $T=1$, with varying mesh size $h$, but fixed time step $\Delta t=0.01$.}
\centering
\begin{tabular}{clclclclclclclcl}
\hline
$h$&
$\|\vec{u}_{f}-\vec{u}_{f}^{h}\|_{L^2}$&
$\rho_{h,\u_f}$&
$\|p_{f}-p_{f}^{h}\|_{L^2}$&
$\rho_{h,p_f}$&
$\|\phi_p-\phi_{p}^{h}\|_{L^2}$&
$\rho_{h,\phi_p}$&
$CPU(s)$&\\
\hline
$\frac{1}{4}$  &0.0697284  &-   &0.353446  &-   &0.0665306  &-   &1.121\\
$\frac{1}{8}$  &0.0176146  &1.98&0.108405  &1.71&0.018559   &1.84&3.823\\
$\frac{1}{16}$ &0.00441515 &2.00&0.0358226 &1.60&0.004804   &1.95&13.928\\
$\frac{1}{32}$ &0.00110714 &2.00&0.0123945 &1.53&0.00122528 &1.97&55.459\\
$\frac{1}{64}$ &0.000280312&1.98&0.00463652&1.42&0.000321683&1.93&231.263\\                                                                   \hline
\end{tabular}
\label{decoupledlfh}
\end{table}
\begin{table}[!ht]
\caption{The convergence order of coupled Linear Multi-step method plus time filter at time $T=1$, with varying mesh size $h$, but fixed time step $\Delta t=0.01$.}
\centering
\begin{tabular}{clclclclclclclcl}
\hline
$h$&
$\|\vec{u}_{f}-\vec{u}_{f}^{h}\|_{L^2}$&
$\rho_{h,\u_f}$&
$\|p_{f}-p_{f}^{h}\|_{L^2}$&
$\rho_{h,p_f}$&
$\|\phi_p-\phi_{p}^{h}\|_{L^2}$&
$\rho_{h,\phi_p}$&
$CPU(s)$&\\
\hline
$\frac{1}{4}$  &0.0697243  &-   &0.353554 &-   &0.0665197  &-   &1.44\\
$\frac{1}{8}$  &0.0176071  &1.99&0.108463 &1.70&0.0185423  &1.84&5.487\\
$\frac{1}{16}$ &0.00440661 &2.00&0.0358539&1.60&0.00478542 &1.95&22.387\\
$\frac{1}{32}$ &0.00109835 &2.00&0.0124059&1.53&0.00120608 &2.00&86.669\\
$\frac{1}{64}$ &0.000271553&2.02&0.0046228&1.42&0.000301909&2.00&352.736\\                                                                   \hline
\end{tabular}
\label{coupledlmtfh}
\end{table}
\begin{table}[!ht]
\caption{The convergence order of decoupled Linear Multi-step method plus time filter at time $T=1$, with varying mesh size $h$ but fixed time step $\Delta t=0.01$.}
\centering
\begin{tabular}{clclclclclclclcl}
\hline
$h$&
$\|\vec{u}_{f}-\vec{u}_{f}^{h}\|_{L^2}$&
$\rho_{h,\u_f}$&
$\|p_{f}-p_{f}^{h}\|_{L^2}$&
$\rho_{h,p_f}$&
$\|\phi_p-\phi_{p}^{h}\|_{L^2}$&
$\rho_{h,\phi_p}$&
$CPU(s)$&\\
\hline
$\frac{1}{4}$  &0.0697243  &-   &0.353555  &-   &0.066521   &-   &1.141\\
$\frac{1}{8}$  &0.017607   &2.00&0.108463  &1.70&0.018544   &1.94&3.605\\
$\frac{1}{16}$ &0.00440648 &2.00&0.0358538 &1.60&0.00478728 &1.95&13.596\\
$\frac{1}{32}$ &0.00109821 &2.00&0.0124061 &1.54&0.00120798 &1.99&56.649\\
$\frac{1}{64}$ &0.000271425&2.02&0.00426237&1.42&0.000303826&1.99&237.072\\                                                                   \hline
\end{tabular}
\label{decoupledlmtfh}
\end{table}

\end{document}